\documentclass[12pt]{article}
\usepackage{amsmath,amssymb,amsfonts,amsthm}

\usepackage{graphicx}
\DeclareGraphicsRule{.bmp}{bmp}{.bb}{}

\setlength{\parindent}{18pt}
\textwidth14.2cm
\textheight21.7cm

\setlength{\parindent}{10pt}
\textwidth14.4cm
\textheight22cm
\newenvironment{myabstract}{\par\noindent
{\bf Abstract . } \small }
{\par\vskip8pt minus3pt\rm}
\newcounter{item}[section]
\newcounter{kirshr}
\newcounter{kirsha}
\newcounter{kirshb}
\newenvironment{enumroman}{\setcounter{kirshr}{1}
\begin{list}{(\roman{kirshr})}{\usecounter{kirshr}} }{\end{list}}
\newenvironment{enumarab}{\setcounter{kirshb}{1}
\begin{list}{(\arabic{kirshb})}{\usecounter{kirshb}} }{\end{list}}
\newtheorem{theorem}{Theorem}[section]

\newtheorem{lemma}[theorem]{Lemma}
\newtheorem{corollary}[theorem]{Corollary}

\newenvironment{demo}[1]{\noindent{\bf #1.}\upshape\mdseries}
{\nopagebreak{\hfill\rule{2mm}{2mm}\nopagebreak}\par\normalfont}
\theoremstyle{definition}
\newtheorem{remark}[theorem]{Remark}

\newtheorem{example}[theorem]{Example}
\newtheorem{definition}[theorem]{Definition}

\def\Nr{{\mathfrak{Nr}}}
\def\Fr{{\mathfrak{Fr}}}
\def\Sg{{\mathfrak{Sg}}}
\def\Fm{{\mathfrak{Fm}}}

\def\K{{\mathfrak{K}}}
\def\CA{{\bf CA}}
\def\K{{\bf K}}

\def\Rd{{\ Rd}}
\def\(R)RA{{\bf (R)RA}}

\def\N{\mathbb{N}}

\def\C{\mathbb{C}}
\def\A{{\mathfrak{A}}}
\def\B{{\mathfrak{B}}}
\def\C{{\mathfrak{C}}}
\def\D{{\mathfrak{D}}}
\def\Rd{{\mathfrak{Rd}}}

\def\PA{{\bf PS}}

\def\T{{\bf T}}
\def\T{{\bf T}}

\def\Gp{{\sf Gp}}

\def\PA{{\bf PA}}
\def\T{{\sf T}}

\def\CPEA{{\sf CPEA}}
\def\L{{\mathfrak{L}}}
\def\dom{{\sf dom}}
\def\rng{{\sf rng}}
\def\E{{\mathfrak{E}}}
\def\si{{i_0, i_1, \cdots, i_k}}
\def\sj{{j_0, j_1, \cdots, j_k}}
\def\tr{{[i_0|j_0]|[i_1|j_1]|\cdots |[i_k|j_k]}}
\def\tra{{[i|j_0]|[i_1|j_1]|\cdots |[i_k|j_k]}}

\def\ttr{{(\t^{j_0}_{i_0}\cdots \t^{j_k}_{i_k})}}
\def\ttra{{(\t^{j_0}_{i}\cdots \t^{j_k}_{i_k})}}

\def\ef{Ehren\-feucht--Fra\"\i ss\'e}

\def\tr{{[i_0|j_0]|[i_1|j_1]|\cdots |[i_k|j_k]}}
\def\edges{{\sf edges}}
\def\w{{\sf w}}
\def\g{{\sf g}}
\def\y{{\sf y}}
\def\r{{\sf r}}
\def\t{{\sf t}}

\def\Nr{{\mathfrak{Nr}}}
\def\Fr{{\mathfrak{Fr}}}
\def\Sg{{\mathfrak{Sg}}}
\def\Fm{{\mathfrak{Fm}}}

\def\K{{\mathfrak{K}}}
\def\CA{{\bf CA}}
\def\K{{\bf K}}

\def\Rd{{\ Rd}}
\def\(R)RA{{\bf (R)RA}}

\def\N{\mathbb{N}}

\def\C{\mathbb{C}}
\def\A{{\mathfrak{A}}}
\def\B{{\mathfrak{B}}}
\def\C{{\mathfrak{C}}}
\def\D{{\mathfrak{D}}}

\def\Rd{{\mathfrak{Rd}}}

\def\PA{{\bf PA}}

\def\Ca{{\mathfrak{Ca}}}
\def\d{Dedekind--MacNeille}
\def\nodes{{\sf nodes}}
\usepackage[all]{xy}

\def\s{{\sf s}}

\def\At{{\sf At}}
\def\PA{{\sf PA}}

\def\Cm{{\sf Cm}}
\def\adm{{\sf adm}}
\def\ls { L\"owenheim--Skolem}
\def\Tm{{\sf Tm}}
\def\pa{$\forall$}
\def\pe{$\exists$}
\def\ws{winning strategy}
\def\CA{{\sf CA}}
\def\T{{\sf T}}

\def\Z{{\mathbb{Z}}}
\def\Fm{{\mathfrak{Fm}}}
\def\map{{\sf map}}
\def\c{{\sf c}}
\title{On notions of representability for cylindric--polyadic algebras, and
a solution to the finitizability problem for quantifier logics with equality}
\author{Tarek Sayed Ahmed\\
Department of Mathematics, Faculty of Science,\\
Cairo University, Giza, Egypt.
  }
%
\begin{document}
\maketitle

\begin{myabstract}
We consider countable so--called rich subsemigroups of $(^\omega\omega,\circ)$; 
each such semigroup $\T$ gives a variety $\sf CPEA_{\T}$ that is axiomatizable by a finite schema of equations taken 
in a countable subsignature
of that of $\omega$--dimensional cylindric--polyadic algebras with equality where substitutions are restricted to maps 
in $\T$.  It is shown that for any such $\sf T$, $\A\in \sf CPEA_{\T}\iff \A$ is representable as a concrete 
set algebra of $\omega$--ary relations. The operations in the signature are set--theoretically interpreted like  in 
polyadic equality set algebras, but such operations are relativized to
a union of cartesian spaces that are not necessarily disjoint. This is a form of guarding semantics.
We show that $\sf CPEA_{\T}$  is canonical and atom--canonical. 
Imposing an extra condition on $\T$, we prove that atomic algebras in $\sf CPEA_\T$ are completely representable and
that $\sf CPEA_\T$ has the super amalgamation property. 
If $\sf T$ is rich and {\it finitely represented}, it is shown that $\CPEA_\T$ is term definitionally equivalent 
to a  finitely axiomatizable Sahlqvist variety. 
Such semigroups exist. This can be regarded as a solution to the central finitizability problem in algebraic logic for first order logic 
{\it with equality} if we do not insist on full fledged commutativity of quantifiers. 
The finite dimensional case is approached from the view point of guarded and clique guarded 
(relativized) semantics of fragments of first order logic using finitely many variables. 
Both positive and negative results are presented.  
\end{myabstract}

\section{Introduction}

{\bf History and overview:}  Polyadic algebras were introduced by Halmos  to provide an algebraic reflection
of the study of first order logic without equality. Later, the algebras were enriched by
diagonal elements to permit the discussion of equality.
That the notion is indeed an adequate reflection of first order logic was
demonstrated by Halmos' representation theorem for locally finite polyadic algebras
(with and without equality). Daigneault and Monk
proved a strong extension of Halmos' theorem, namely, that
every polyadic algebra (without equality) of infinite dimension is representable \cite{DM}.
The proofs of all such results are in essence `Henkin constructions' implemented algebraically
using a neat embedding theorem.
However, this technique no longer works for polyadic algebras {\it with} equality. In this case all algebras have the neat embedding property, but
there are algebras that are not representable \cite{HMT2, Sain}.

Ferenczi \cite{f, Fer} overcame this impasse  by implementing
two successive changes to the theory of Halmos' polyadic equality algebras of infinite dimension $\alpha$ $(\sf PEA_{\alpha})$. First, he changed the signature
by discarding infinitary cylindrifiers (that is cylindrifications on infinite subsets of $\alpha$), but he kept all substitution operators
corresponding to any transformation $\tau:\alpha\to \alpha$. The substitution operator corresponding to $\tau$ is denoted by
${\sf s}_{\tau}$.  If $\A\in \sf PEA_{\alpha}$ and $\tau:\alpha\to \alpha,$
then ${\sf s}_{\tau}$ is a unary operation on $\A$
that is  a Boolean endomorphism.

Next, he weakened the axioms of polyadic equality algebras restricting them to the new strict reduct.
The axiom Ferenczi weakened is that of commutativity of cylindrifiers, so that in the corresponding logic $\exists x\exists y\phi$ is
not always equivalent to $\exists y\exists x\phi$ ($\phi$ a formula). Ferenczi replaced this commutativity axiom by a strictly weaker one.
These significant modifications enabled him
to obtain a strong representability
result  via a neat embedding theorem analogous to the polyadic case (without equality), but using
{\it relativized semantics}.
In this case, every algebra has the neat embedding property (this does not happen for cylindric algebras of dimension $>1$).
Furthermore this property enforces the relativized
representability of the algebra (this does not happen for polyadic {\it equality} algebras).

{\bf The main results:} The theme in {\it relativization} for cylindric--like algebras is (syntactically) weakening the commutativity of
cylindrifiers thus (semantically) moving away from Tarskian
square semantics.  The aim is to diffuse undesirable properties,  like undecidability of the validity problem, and to obtain completeness theorems.
In this paper, we further pursue  this line of research. We show,
using a neat embedding theorem,
that the atomic algebras introduced by Ferenczi, recalled below in definition \ref{capa}, are {\it completely representable}.  An algebra is completely 
representable, if it has a representation 
that carries all meets, possibly infinite, to set--theoretic
intersection.  We also show that the free algebras have a strong interpolation property. 
Most important is that we introduce a countable version of such algebras, and not only do
we prove the countable analogues
of the above two results, but we also prove that
the corresponding infinitary logic with equality
has an  omitting types theorem. This was not possible before because the signature was uncountable, 
and it is well known that omitting types theorems are very much tied to countability via the Baire 
category theorem (though they are usually not presented this way).

Our investigations are in the framework of  what is referred to in the literature as {\it the  semigroup approach in algebraic logic} 
initiated by Craig, and further pursued by Andr\'eka, N\'emeti, Thompson, Sain and others \cite{AT, Nemeti, Sain, SG, Bulletin}. 
The substitution operations ${\sf s}_{\tau}$ in the signature of the variety ${\sf V}_{\T}$ that 
we define and study, are determined by a countable subsemigroup $\T$ of $({}^{\omega}\omega, \circ)$;
we consider only those substitution operations $\s_{\tau}$s for which $\tau\in \T$. 
The signature of $\sf V_\T$ contains, besides the Boolean operations and  $\s_{\tau}$ for all $\tau\in \T$, 
all cylindrifiers and diagonal elements with indices in $\omega$, so it consists of $\omega$--dimensional algebras whose signature expands 
the signature of $\omega$--dimensional cylindric algebras by substitutions indexed by elements of $\T$.

We show that if $\sf T$ is {\it rich} (to be defined below),  then every algebra in ${\sf V}_{\T}$ is representable as a set algebra with top element a set of $\omega$--ary sequences,
and operations interpreted like those of $\omega$--dimensional polyadic equality set algebras restricted
to the signature of ${\sf V}_\T$.  This representability notion (semantics)  does not necessarily respect commutativity of cylindrifiers (quantifiers), 
but it respects a weak form thereof.
We show that $\sf V_\T$  is a Sahlqvist, completely additive conjugated variety,
that is axiomatizable by a recursive finite Halmos' schemata.
Furthermore, ${\sf V}_{\sf T}$ is 
canonical, atom--canonical, and closed under \d\ completions. We also show, that if $\T$ is {\it strongly rich}, a condition stronger than richness as the name suggests, 
then the atomic algebras in $\sf V_\T$ are completely representable, 
and that $\sf V_{\T}$  has the super amalgamation property. 

If $\T$ is rich and {\it finitely presented}, then we show that $\sf V_\T$ is {\it definitionally equivalent} 
to a variety  having a finite signature, and admitting 
a {\it finite equational Sahlqvist axiomatization}. 
Such a semigroup $\T$ 
was constructed by Sain \cite{Sain}. Using such a $\T$,  one can show that 
the finite set $\sf S$ presenting $\T$ defines a finitely axiomatizable  
variety ${\sf V}_{\sf S}$ in the {\it finite signature} expanding the Boolean operations, by only the cylindrifier $\c_0$, 
the diagonal element ${\sf d}_{01}$
and substitution operations ${\sf s}_{\tau}$, $\tau\in \sf S$, such that  
${\sf V}_{\sf S}=\bold I{\sf Gp_\T}$, where  $\sf Gp_{\T}$ denotes the concrete class of algebras (consisting of $\omega$--ary relations) 
representing algebras in ${\sf V}_\T$  and  
 $\bold I$ denotes the operation of taking isomorphic copies. 
In particular, the variety $\bold I\sf Gp_\T$ is, like Boolean set algebras, 
finitely axiomatizable. The corresponding algebraisable logic $\L_\T$ admits {\it a finite,  
sound and complete Hilbert style axiomatization}. For first order logic the {\it Entscheidungsproblem}
posed by Hilbert has a negative answer: The validity problem of first order logic is undecidable. 
The validity problem for $\L_{\T}$ is not settled in this paper. 
Algebraically, we do not know whether the equational theory
of $\bold I{\sf Gp}_{\T}$ is decidable  or not.

We consider our positive (main) results 
a reasonable solution to the finitizability problem for first order logic {\it with} equality \cite{AU, Sain, Bulletin, Nemeti} if we are willing to slightly broaden standard 
Tarskian semantics.
The finitizability problem $(\sf FP)$, seeks a Stone--like representability result for algebras of relations having infinite rank.  
The $\sf FP$, originating with Henkin, Monk and Tarski in the seventies of the last century, 
asks 
for a `nice' variety of representable algebras  whose members are concrete algebras (like Boolean fields of sets and cylindric set algebras) consisting of $\omega$--ary
relations,  where the  operations are set--theoretically defined
(like the Boolean intersection and cylindrifiers interpreted as projections). 
This variety, in addition,  should offer an algebraization (in the standard Blok--Pigozzi sense \cite{bp}) of variants or modifications
of first order logic, and at the same time admits a 
strictly finite equational axiomatization. Dominated by negative results that can be traced back to 
the work of Henkin, Monk and Tarski in the late sixties of the last century \cite{Andreka}, 
this problem has provoked continuous extensive research till the present day. 

The research 
consisted mainly of finding ways to sidestep a long list of non--finite axiomatizability results proved for standard algebraizations 
of $L_{\omega, \omega}$ and its finite variable fragments (as long as the variables available are $>2$), such as (primarily) representable cylindric and quasi--polyadic algebras.
The non--finite axiomatizability results involving dozens of publications, 
were proved by pioneers including Tarski, Andr\'eka, Biro, Johnson, Hirsch, Hodkinson, N\'emeti, Monk, Maddux, Sain, and Thompson. The reader is referred 
to \cite{Nemeti, Bulletin} for an overview.
A satisfactory solution for first order logic without equality, to be recalled below, was provided by Sain \cite{Sain}. 
But for first order logic {\it with equality}, the finitizability problem remained resilient to 
many dedicated  trials.  

We show that our solution is an 
infinite analogue of the finite dimensional algebras studied in \cite{Fer}, in the sense that the class of representable algebras 
in both cases is obtained by relativizing top elements 
to unions of certain spaces (not necessarily disjoint). 
We also show that the universal, hence equational theory, of such finite dimensional varieties 
of representable algebras 
is decidable, so that the validity problem for the corresponding guarded fragment of first order logic is 
decidable. This result is known \cite{ans}, but we provide a new proof using the decidability of the loosely guarded fragment of first order logic.
Throughout the paper, we follow the notation of \cite{1} 
which is in conformity with the notation of the monographs \cite{HMT1, HMT2}. Notation that is possibly unfamiliar  will be 
explained at its first occurrence in the text.

\subsection*{Layout} 

\begin{enumarab}

\item In the following section we prove that atomic cylindric--polyadic equality algebras are completely representable.

\item In section 3, we restrict our investigation to the countable case.

\item Using the results in section 3, in the following section a `non--commutative' solution, moving away, but
only slightly from Tarskian semantics,
is given to the finitizability problem for first order logic with equality.

\item In section 5, we discuss in some depth the status
of the finite dimensional version of the finitizability problem
dealing with guarded and the so--called locally guarded fragments
of first order logic. We prove a new theorem on the failure of the omitting types theorem in a 
strong sense for finite variable 
locally guarded fragments of first order logic, and we prove the aforementioned positive decidability result on finite 
variable guarded fragments of first order logic. 
\end{enumarab}
In the final section our results, together with closely related other (mostly known) results, 
are summarized in tabular form.

\section{Cylindric--polyadic equality algebras}

We start by recalling the abstract equational
axiomatization of algebras considered henceforth.
Their signature is  obtained from that of polyadic equality algebras by discarding infinitary
cylindrifiers. Only finite cylindrifiers remain, so these algebras 
have a cylindric facet, as well; hence their name.
The axiomatization is due to Ferenczi \cite{f}. In this subsection $\alpha$ is an infinite ordinal.

\begin{definition}\label{capa}  By a {\it cylindric--polyadic equality algebra} of dimension $\alpha$,
or a $\CPEA_{\alpha}$ for short,
we understand an algebra of the form
$$\A=(A,+,\cdot ,-,0,1,{\sf c}_{i},{\sf s}_{\tau}, {\sf d}_{ij})_{i, j\in \alpha ,\tau\in {}^{\alpha}\alpha}$$
where ${\sf c}_{i}$ ($i\in \alpha$) and ${\sf s}_{\tau}$ ($\tau\in {}^{\alpha}\alpha)$ are unary
operations on $A$, such that the postulates
below hold for $x,y\in A$, $\tau,\sigma\in {}^{\alpha}\alpha$ and
$i, j\in \alpha$

\begin{enumerate}

\item $(A,+,\cdot, -, 0, 1)$ is a Boolean algebra,
\item ${\sf c}_{i}0=0,$

\item $x\leq {\sf c}_{i}x,$

\item ${\sf c}_{i}(x\cdot {\sf c}_{i}y)={\sf c}_{i}x \cdot {\sf c}_{i}y,$

\item ${\sf s}_{\tau}$ is a Boolean endomorphism,

\item ${\sf s}_{Id}x=x,$

\item ${\sf s}_{\sigma\circ \tau}={\sf s}_{\sigma}\circ {\sf s}_{\tau},$

\item  ${\sf d}\cdot {\sf s}_{\sigma}x={\sf d}\cdot {\sf s}_{\tau}x$ if the product $\sf d$
of the elements ${\sf d}_{\tau(i), \sigma(i)} (i\in \Delta x)$ exists,

\item ${\sf c}_i{\sf s}_{\sigma}x\leq {\sf s}_{\sigma}{\sf c}_jx$
if $\sigma^{-1}\{i\}$ equals $\{j\}$ or is the empty set,
and equality holds in place of  $\leq$ if $\sigma$ is a permutation,

\item ${\sf d}_{ii}=1,$

\item $x\cdot {\sf d}_{ij}\leq {\sf s}_{[i|j]}x,$

\item ${\sf s}_{\tau}{\sf d}_{ij}={\sf d}_{\tau(i), \tau(j)}$.

\end{enumerate}

\end{definition}

The axiom in item (9) 
is substantially weaker than that of commutativity of cylindrifiers.
Let $\Gp_{\alpha}$ be the class of representable algebras \cite[Definition 6.3.2]{f}. The top element of such algebras is
a union of {\it cartesian spaces} that are not necessarily
disjoint (as is the case with cylindric algebras)
and the operations are interpreted in the usual concrete sense, like polyadic equality algebras
relativizing the available operations to top elements. 

A {\it cartesian space} is a set of the form $^{\alpha}U$ for some non--empty set $U$.
It is tedious but routine to check that all axioms hold in such algebras. This is a soundness theorem.
Conversely, Ferenczi proved {\it completeness}, namely, $\CPEA_{\alpha}\subseteq \sf Gp_{\alpha}$ \cite{f}.
Next we show that any such algebra, when atomic,
admits a {\it complete relativized representation} in the following sense:

\begin{definition} Let $\A\in \sf CPEA_{\alpha}$.
Then $\A$ is {\it completely representable} if there exist $\B\in \Gp_{\alpha}$ and an isomorphism
$f:\A\to \B$ such that for all $X\subseteq \A$,
$f(\prod X)= \bigcap f(X)$ whenever $\prod^{\A}X$ exists.
\end{definition}

We say that $f:\A\to \B$ is a {\it complete representation} of $\A$.
It is known \cite{HH} that $f:\A\to \B$ is a complete representation of $\A$ $\iff$ $\A$ is atomic and completely additive 
and $f$ is {\it atomic},  in the sense that $\bigcup_{x\in \At\A}f(x)=1^{\B}$.

The proof of the following
theorem is similar to the proof of the main result in \cite{Sayedpa}.
Before embarking on the proof, we need the following
crucial definitions. We write $Id_X$ for the identity function on $X$.
Sometimes we write only $Id$ if $X$ is clear from the context.
\begin{definition} 
\begin{enumarab}
\item Let  $\alpha<\beta$ be infinite ordinals and $\B\in \CPEA_{\beta}$. Then the {\it $\alpha$--neat reduct} of $\B$, in symbols
$\Nr_{\alpha}\B$, is the
algebra obtained from $\B$, by discarding
cylindrifiers and diagonal elements whose indices are in $\beta\sim \alpha$, and restricting the universe to
the set $Nr_{\alpha}B=\{x\in \B: \{i\in \beta: {\sf c}_ix\neq x\}\subseteq \alpha\}.$
For $\tau\in {}^{\alpha}\alpha$ the substitution operator ${\sf s}_{\tau}$
is defined by  ${\sf s}_{\bar{\tau}}^{\B}$, where $\bar{\tau}=\tau\cup Id_{\beta\sim \alpha}$.

\item  A {\it transformation system} is a quadruple of the form $(\A, I, G, {\sf S})$ where $\A$ is an 
algebra of any signature, 
$I$ is a non--empty set (we will only be concerned with infinite sets),
$G$ is a subsemigroup of $(^II,\circ)$ (the operation $\circ$ denotes composition of maps) 
and ${\sf S}$ is a homomorphism from $G$ to the semigroup of endomorphisms of $\A$. 
Elements of $G$ are called transformations. 
\end{enumarab}
\end{definition}
In the following proof we use that our algebras are completely additive.  
The next theorem implies the representability result of Ferenczi \cite{f}, because $\CPEA_{\alpha}$ is Sahlqvist axiomatizable,
so it is {\it canonical}. Given $\A\in \CPEA_{\alpha}$, then $\A$ embeds into its completey representable atomic 
canonical extension, so it
will be representable. The theorem 
also has an interesting metalogical interpretation. 
The corresponding logic which is a non--commutative fragment of Keisler's logic \cite{K} 
has  a `Vaught theorem':  Atomic theories have atomic models. 
Witness \cite{conference, Sayedpa} for an analogous situation
for  other fragments of Keisler's 
logic including itself.

\begin{theorem}\label{main} Every atomic $\CPEA_{\alpha}$ is completely representable. In particular, the class of completely representable 
$\sf CPEA_{\alpha}$s is elementary.
\end{theorem}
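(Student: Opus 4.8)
The plan is to follow the proof of the main theorem of \cite{Sayedpa}: a neat embedding theorem followed by a Henkin--style representation construction carried out ``atomically''. Let $\A\in\CPEA_{\alpha}$ be atomic. Since, in contrast with cylindric algebras of dimension $>1$, every Ferenczi algebra has the neat embedding property, one can first \emph{dilate}: fix $\beta=\alpha+\omega$ and choose $\B\in\CPEA_{\beta}$ together with an embedding $f\colon\A\to\Nr_{\alpha}\B$ such that $f(A)$ generates $\B$. The $\omega$ spare dimensions in $\beta\sim\alpha$ serve to provide Henkin witnesses in the representation to be built, and the fact that \emph{all} substitution operators $\s_{\tau}$ with $\tau\in{}^{\beta}\beta$ are present is what makes this witnessing succeed despite the absence of full commutativity of cylindrifiers --- it is precisely this that fails in the cylindric case.

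The second, and main, step is to represent $\B$ \emph{atomically relative to $\A$}. Running Ferenczi's representation argument for $\B$ (which delivers the inclusion $\CPEA_{\beta}\subseteq\Gp_{\beta}$), but enriching its Henkin part, one aims to produce a representation $h\colon\B\to\C$ with $\C\in\Gp_{\beta}$ and $\bigcup_{a\in\At\A}h(f(a))=1^{\C}$. This is where atomicity and, crucially, complete additivity of $\A$ (emphasised just before the statement) enter: since $\A$ is atomic and completely additive one has $\sum^{\A}\At\A=1^{\A}$, so throughout the construction the requirement ``the $\beta$--sequence currently being adjoined to the top element lies in $h(f(a))$ for some $a\in\At\A$'' is a dense one and can be met at every stage; closing off under all such requirements yields the displayed identity. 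Equivalently, one arranges that the dilation itself is atomic, $\sum^{\B}\{f(a):a\in\At\A\}=1^{\B}$ (no nonzero element of $\B$ is disjoint from all of $f(\At\A)$, proved by induction on term complexity using complete additivity of the $\s_{\tau}$ and $\c_i$), and then takes \emph{any} representation of $\B$. I expect this to be the only genuinely delicate point: one must transport the completeness machinery of \cite{f} to the atomic setting while respecting the weak, non--commutative form of the commutativity axiom, the book-keeping paralleling \cite{Sayedpa}.

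Granting this, the conclusion follows routinely. The composite $h\circ f$ is an injective homomorphism from $\A$ into $\Nr_{\alpha}\C$; using the collapsing substitutions $\s_{\tau}$ with $\tau$ the identity on $\alpha$ (which fix every element of a neat reduct, a routine consequence of axioms (6) and (8) of Definition~\ref{capa}), membership in each $h(f(a))$ depends on a $\beta$--sequence only through its restriction to $\alpha$, so restricting all sequences in $1^{\C}$ to their first $\alpha$ coordinates turns $h\circ f$ into an injective homomorphism $g\colon\A\to\D$ with $\D\in\Gp_{\alpha}$ (subalgebras of $\Gp_{\alpha}$'s being in $\Gp_{\alpha}$). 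Thus $g$ is a representation of $\A$, and $\bigcup_{a\in\At\A}g(a)=1^{\D}$ by construction, i.e. $g$ is atomic; since $\A$ is atomic and completely additive, the criterion recalled from \cite{HH} before the statement shows that $g$ is a \emph{complete} representation. Hence every atomic $\CPEA_{\alpha}$ is completely representable.

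For the ``in particular'' clause, the same criterion of \cite{HH} gives the converse: any $\CPEA_{\alpha}$ admitting a complete representation is atomic (complete additivity holding automatically in our variety). Hence the class of completely representable $\CPEA_{\alpha}$s coincides with the class of atomic $\CPEA_{\alpha}$s, which is elementary --- it is axiomatised by the equational theory of $\CPEA_{\alpha}$ together with the single first--order sentence declaring that every nonzero element dominates an atom.
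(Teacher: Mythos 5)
Your overall strategy (dilate via a neat embedding, then arrange that the representation of the dilation is atomic) is the paper's strategy, but the step you yourself flag as ``the only genuinely delicate point'' is exactly where the proposal breaks down, in both of the forms you offer. The second form is simply false: even granting $\sum^{\B}\{f(a):a\in\At\A\}=1^{\B}$, an arbitrary representation of $\B$ need not carry this infinite supremum to a union --- preservation of infinite suprema is precisely what distinguishes complete representations from representations, and already for atomic Boolean algebras (e.g.\ $\wp(\omega)$ embedded into the clopen algebra of its Stone space) ``any representation'' fails to be atomic. The first form, a Henkin/density argument in which each requirement ``can be met at every stage'', implicitly needs the requirements to be handled by a construction of length $\omega$ or by a Baire category argument over countably many dense sets; but the theorem carries no countability hypothesis --- $\alpha$ is an arbitrary infinite ordinal, $\A$ is arbitrary, and all of ${}^{\alpha}\alpha$ is in the signature --- so there are in general uncountably many witnessing requirements (one set $G_{\tau,i,p}$ for each admissible $\tau$ and each $i,p$, coming from ${\sf s}_{\tau}{\sf c}_{i}p=\sum_{j}{\sf s}_{\tau}{\sf s}^{i}_{j}p$) plus one atomicity requirement per admissible $\tau$. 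The paper's way around this is the one device your sketch is missing: since the dilation $\B$ is itself atomic, one takes $F$ to be a \emph{principal} ultrafilter generated by an atom below the prescribed nonzero element; principal ultrafilters are isolated points of the Stone space and hence lie outside \emph{every} nowhere dense set simultaneously, with no cardinality restriction. That some such device is needed is illustrated by item (2) of theorem \ref{interpolation}, where in the countable--signature variant the countability hypothesis genuinely cannot be dropped.

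A secondary gap: you need $\sum^{\B}f(\At\A)=1^{\B}$, i.e.\ that $\A$ sits as a complete subalgebra of $\Nr_{\alpha}\B$, and your parenthetical ``induction on term complexity'' is not the right tool. The paper proves this by noting that any $d\in\B$ with $f(a)\leq d<1$ for all atoms $a$ involves only finitely many extra dimensions $m_{1},\ldots,m_{n}$, and that $t=-{\sf c}_{m_{1}}\cdots{\sf c}_{m_{n}}(-d)$ then lies in $\Nr_{\alpha}\B$ and still separates, contradicting $\sum^{\A}\At\A=1$; this exploits the specific structure of the functional dilation rather than complete additivity of the operations. The closing ``in particular'' paragraph and the restriction-to-$\alpha$ step are essentially fine once an atomic representation of the dilation is actually in hand, though the paper builds the base directly as a quotient $\Gamma/\sim$ of dimensions by the diagonal congruence rather than by restricting sequences.
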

\begin{proof} Let $\A\in \CPEA_{\alpha}$ be atomic. Let $c\in \A$ be non--zero. We will find a $\C\in \Gp_{\alpha}$ and a homomorphism
$f:\A \to \C$ that preserves arbitrary suprema
whenever they exist and also satisfies that  $f(c)\neq 0$.
This homomorphism may not be injective.
Let ${\sf End}(\A)$ be the semigroup of Boolean endomorphisms on
$\A$.  Then the map $\sf S:{}^\alpha\alpha\to {\sf End}(\A)$ defined  via $\tau\mapsto {\sf s}_{\tau}$ is a homomorphism of semigroups.
The operation on both semigroups is composition of maps, so that $(\A, \alpha, {}^{\alpha}\alpha, \sf S)$ is a transformation system. 
For any set $X$, let $F(^{\alpha}X,\A)$
be the set of all maps from $^{\alpha}X$ to $\A$ endowed with Boolean  operations defined pointwise and for
$\tau\in {}^\alpha\alpha$ and $f\in F(^{\alpha}X, \A)$, put ${\sf s}_{\tau}f(x)=f(x\circ \tau)$.

This turns $F(^{\alpha}X,\A)$ to a transformation system as well.
The map $H:\A\to F(^{\alpha}\alpha, \A)$ defined by $H(p)(x)={\sf s}_xp$ is
easily checked to be an embedding of transfomation systems. Assume that $\beta\supseteq \alpha$. Then $K:F(^{\alpha}\alpha, \A)\to F(^{\beta}\alpha, \A)$
defined by $K(f)x=f(x\upharpoonright \alpha)$ is an embedding, too.
These facts are fairly straightforward to establish
\cite[Theorems 3.1, 3.2]{DM}.

Call $F(^{\beta}\alpha, \A)$ a minimal functional dilation of $F(^{\alpha}\alpha, \A)$.
Elements of the big algebra, or the (cylindrifier free)
functional dilation, are of form ${\sf s}_{\sigma}p$,
$p\in F(^{\beta}\alpha, \A)$ where $\sigma\upharpoonright \alpha$ is injective \cite[Theorems 4.3-4.4]{DM}.

Let $\B$ be the algebra obtained from $\A$, by discarding its cylindrifiers, then taking  a minimal functional dilation,
dilating $\A$ to a regular cardinal $\mathfrak{n}$.\footnote{If $\kappa$ is a cardinal, then {\it the cofinality of $\kappa$},
in symbols ${\sf cf}\kappa$, is  the least cardinal $\lambda$ such that $\kappa$ is the union of $\lambda$ sets each having cardinality $<\kappa$. The cardinal
$\kappa$ is {\it regular} if ${\sf cf}\kappa=\kappa$.} We also require that $|\mathfrak{n}|>|\alpha|$
and $|\mathfrak{n}\sim \alpha|=\mathfrak{n}$.
One re-defines cylindrifiers in the dilation $\B$ by setting for each  $i\in  \mathfrak{n}:$
$${\sf c}_{i}{\sf s}_{\sigma}^{\B}p={\sf s}_{\rho^{-1}}^{\B} {\sf c}_{(\rho(i)\cap \sigma(\alpha))}^{\A}
{\sf s}_{(\rho\sigma\upharpoonright \alpha)}^{\A}p.$$
Here $\rho$ is any permutation such that $\rho\circ \sigma(\alpha)\subseteq \sigma(\alpha).$
The definition is sound, that is, it is independent of $\rho, \sigma, p$; furthermore, it agrees with the old cylindrifiers in $\A$.
Identifying algebras with their transformation systems
we get that $\A\cong \Nr_{\alpha}\B$, via the isomorphisn $H$ defined
for $f\in \A$ and $x\in {}^{\mathfrak{n}}\alpha$ by,
$H(f)x=f(y)$ where $y\in {}^{\alpha}\alpha$ and $x\upharpoonright \alpha=y$,
\cite[Theorem 3.10]{DM}.
This dilation also has Boolean reduct isomorphic to $F({}^\mathfrak{n}\alpha, \A)$, in particular, it is atomic because $\A$ is atomic
(a product of atomic Boolean algebras is atomic). For $\tau\in {}^\mathfrak{n}\mathfrak{n}$,
$\dom\tau=\{i\in \mathfrak{n}: \tau(i)\neq i\}$ and $\rng(\tau)=\{\tau(i): i\neq \tau(i)\}$.
Let $\sf adm$ be  the set of {\it admissible substitutions}. The transformation $\tau\in {}^{\mathfrak{n}}\mathfrak{n}$
is {\it admissible} if
$\dom\tau\subseteq \alpha$ and $\rng\tau\cap \alpha=\emptyset$, so that $\rng\tau\subseteq \mathfrak{n}\sim \alpha$.
Then we have
for all $j< \mathfrak{n}$, $p\in \B$ and $\sigma\in \sf adm$,
\begin{equation}\label{tarek1}
\begin{split}
{\sf s}_{\sigma}{\sf c}_{j}p=\sum_{i<\mathfrak{n}} {\sf s}_{\sigma}{\sf s}_i^jp
\end{split}
\end{equation}
The last supremum uses that ${\sf c}_kp=\sum_{i<\mathfrak{n}} {\sf s}_i^k p$, which is proved like the cylindric case \cite[Theorem 1.11.6]{HMT1}.
Let $X$ be the set of atoms of $\A$. Since $\A$ is atomic, then  $\sum^{\A} X=1$. By $\A=\Nr_{\alpha}\B$ we also have $\sum^{\B}X=1$
because $\A$ is a complete subalgebra of $\B$, that is if $S\subseteq \A$ and $y\in A$
is such that $\sum ^{\A}S=y$, then $\sum^{\B}S=y$.

To see why assume that $S\subseteq \A$ and $\sum ^{\A}S=y$, and for contradiction that there exists $d\in \B$ such that
$s\leq d< y$ for all $s\in S$. Then $d$ uses finitely many dimensions not in $\alpha$, say $m_1,\ldots, m_n$.
Let $t=y\cdot -{\sf c}_{m_1}\ldots {\sf c}_{m_n}(-d)$  (here the order of cylindrifiers makes a difference
because cylindrifiers do not commute but in this context the order is immaterial,
any fixed order will do). We claim that $t\in \A=\Nr_{\alpha}\B$ and $s\leq t<y$ for all $s\in S$. This contradicts
$y=\sum^{\A}S$.
The first required follows from the fact that $\Delta y\subseteq \alpha$ and that all indices in $\mathfrak{n}\sim \alpha$ that occur in $d$
are cylindrified.  In more detail, put $J=\{m_1, \ldots, m_n\}$ (such that cylindrification on $J$ is taken in this order) and let 
$i\in \mathfrak{n}\sim \alpha$, then:
$${\sf c}_{i}t={\sf c}_{i}(-{\sf c}_{(J)} (-d))=
 {\sf c}_{i}-{\sf c}_{(J)} (-d)$$
$$={\sf c}_{i} -{\sf c}_{i}{\sf c}_{(J)}( -d)=
-{\sf c}_{i}{\sf c}_{(J)}( -d)=
-{\sf c}_{(J)}( -d)=t.$$
We have shown that ${\sf c}_it=t$ for all $i\in \mathfrak{n}\sim \alpha$, hrene $t\in \Nr_{\alpha}\B=\A$. If $s\in S$, we show that $s\leq t$. We know that $s\leq y$. Also $s\leq d$, so $s\cdot -d=0$.
Hence $0={\sf c}_{m_1}\ldots {\sf c}_{m_n}(s\cdot -d)=s\cdot {\sf c}_{m_1}\ldots {\sf c}_{m_n}(-d)$, so
$s\leq -{\sf c}_{m_1}\ldots {\sf c}_{m_n}( -d)$, hence  $s\leq t$ as required. We finally check that $t<y$. If not, then
$t=y$ so $y \leq -{\sf c}_{m_1}\ldots {\sf c}_{m_n}(-d)$ and so $y\cdot  {\sf c}_m\ldots {\sf c}_{m_n}(-d)=0$.
But $-d\leq {\sf c}_m\ldots {\sf c}_{m_n}(-d)$,  hence $y\cdot -d\leq y\cdot  {\sf c}_m\ldots {\sf c}_{m_n}(-d)=0.$
Hence $y\cdot -d =0$ and this contradicts that $d<y$. We have proved that $\sum^{\B}X=1.$

Because substitutions are completely additive, we get:
\begin{equation}\label{tarek2}
\begin{split}
(\forall \tau\in {\sf adm})({\sf s}_{\tau}^{\B}X=1).
\end{split}
\end{equation}
Let $S$ be the Stone space of $\B$, whose underlying set consists of all Boolean ultrafilters of
$\B$. Let $X^*$ be the set of principal ultrafilters of $\B$ (those generated by the atoms).
These are isolated points in the Stone topology, and they form a dense set in the Stone topology since $\B$ is atomic.
So we have $X^*\cap T=\emptyset$ for every nowhere dense set $T$.
For $a\in \B$, let $N_a$ denote the set of all Boolean ultrafilters containing $a$.
Now  for all $i\in \alpha$, $p\in \B$ and $\tau\in {\sf adm}$ we have,
by the suprema, evaluated in (1) and (2):
\begin{equation}\label{tarek3}
\begin{split}
G_{\tau, i,p}=N_{{\sf s}_{\tau}{\sf c}_{i}p}\sim \bigcup_{j\in \mathfrak{n}, \tau\in \sf adm} N_{{\sf s}_{\tau}{\sf s}_j^ip}
\end{split}
\end{equation}
and
\begin{equation}\label{tarek4}
\begin{split}
G_{X, \tau}=S\sim \bigcup_{x\in X}N_{s_{\tau}x}.
\end{split}
\end{equation}
are nowhere dense in the Stone topology $S$.
Take $F$ to be any principal ultrafilter of $S$ containing $c$.
This is possible since $\B$ is atomic, so there is an atom $x$ below $c$; just take the
ultrafilter generated by $x$.
Then $F\in X^*$, so $F\notin G_{\tau, i, p}$, $F\notin G_{X,\tau},$
for every $i\in \alpha$, $p\in B$
and $\tau\in \sf adm$. By condition (4) and definition,   $F$ is a {\it perfect ultrafilter} \cite[pp.128]{Sayedneat}.

Let $\Gamma=\{i\in \mathfrak{n}:\exists j\in \alpha: {\sf c}_i{\sf d}_{ij}\in F\}$.
Since $\c_i{\sf d}_{ij}=1$, then $\alpha\subseteq \Gamma$. Furthermore the inclusion is proper, 
because for every $i\in \alpha$, there is a $j\not\in \alpha$ such that ${\sf d}_{ij}\in F$. 
Define the relation $\sim$ on $\Gamma$ via  $m\sim n\iff {\sf d}_{mn}\in F.$ 
Then $\sim$ is an equivalence relation because for all $i, j, k\in \alpha$, ${\sf d}_{ii}=1\in F$, ${\sf d}_{ij}={\sf d}_{ji}$,
${\sf d}_{ik}\cdot {\sf d}_{kj}\leq {\sf d}_{lk}$ and filters are closed upwards.
Now we show that the required  representation will be a $\Gp_{\alpha}$ with base
$M=\Gamma/\sim$. One defines the  homomorphism $f$ like in \cite[pp.128-129]{Sayedneat}
using the hitherto obtained perfect ultrafilter $F$ as follows:
For $\tau\in {}^{\alpha}\Gamma$, 
such that $\rng(\tau)\subseteq \Gamma\sim \alpha$ (the last set is non--empty, because $\alpha\subsetneq \Gamma$),
let  $\bar{\tau}: \alpha\to M$ be defined by $\bar{\tau}(i)=\tau(i)/\sim$  
and write $\tau^+$ for $\tau\cup Id_{\mathfrak{n}\sim \alpha}$. 
Then $\tau^+\in \sf adm$, because $\tau^+\upharpoonright \alpha=\tau$, $\rng(\tau)\cap \alpha=\emptyset,$ 
and $\tau^+(i)=i$ for all $i\in \mathfrak{n}\sim \alpha$. 

Let $V=\{\bar{\tau}\in {}^{\alpha}M: \tau: \alpha\to \Gamma,  \rng(\tau)\cap \alpha=\emptyset\}.$
Then $V\subseteq {}^{\alpha}M$ is non--empty (because $\alpha\subsetneq \Gamma$).
Now define $f$ with domain $\A$ via: 
$a\mapsto \{\bar{\tau}\in V: \s_{\tau^+}^{\B}a\in F\}.$ 
Then $f$ is well defined, that is, whenever $\sigma, \tau\in {}^{\alpha}\Gamma$ and 
$\tau(i)\sim \sigma(i)$ for all $i\in \alpha$, then for any $a\in \A$, 
$\s_{\tau^+}^{\B}a\in F\iff \s_{\sigma^+}^{\B}a\in F$.
The congruence relation just defined
on $\Gamma$  guarantees
that the hitherto defined homomorphism respects the diagonal elements.
For the other operations,
preservation of cylindrifiers
is guaranteed by the condition that $F\notin G_{\tau, i, p}$ for all $\tau\in {\sf adm}, i\in \alpha$
and all $p\in A$.

Moreover $f$ is an
atomic representation since by (3) $F\notin G_{X,\tau}$ for every $\tau\in \sf adm$,
which means that for every $\tau\in  \sf adm$
there exists $x\in X$, such that
${\sf s}_{\tau}^{\B}x\in F$, and so $\bigcup_{x\in X}f(x)=V.$
We conclude that $f$ is a complete  representation, since it is an atomic one.
To obtain  $\C\in \Gp_{\alpha}$ and a complete representation
from $\A$ to $\C$, one takes the subdirect product of set algebras constructed for each non-zero element of
$\A$.
\end{proof}

\section{The countable case}

Now we address a {\it countable version} of cylindric--polyadic equality algebras.
For a start, we define certain cardinals that will play a key role in some omitting types theorems that we will prove in a while.
\begin{itemize}
\item Let $\mathfrak{p}$ be the least cardinal  $\kappa$ such that there are $\kappa$ many meager sets  of $\mathbb{R}$ whose union is not meager.
If $\lambda<\mathfrak{p}$, and $(A_i: i<\lambda)$ is a family of meager subsets of a Polish space $X$\footnote{A {\it Polish space} is a topological space that is metrizable with a complete separable metric; 
the real line and the Cantor set are the prime examples.},
then $\bigcup_{i\in \lambda}A_i$ is meager.
The cardinal $\sf cov K$ is the least cardinal such the  Baire category theorem
for Polish spaces fails. If $X$ is a Polish space, then it cannot be covered by $<\sf covK$ many meager sets.
\item The cardinals $\sf cov K$ and $\mathfrak{p}$ are uncountable cardinals, such that
$\mathfrak{p}\leq \sf cov K\leq 2^{\omega}$. It is consistent that $\mathfrak{p}< \sf covK.$
\end{itemize}
For the definition and required properties of $\mathfrak{p}$, witness \cite[pp.3, pp.44-45, Corollary 22c]{Fre}. 
For properties of $\sf cov K,$
witness \cite[The remark on pp.217]{Sayed}.

Since any second countable compact Hausdorff space is Polish, the above properties apply to Stone spaces of countable Boolean algebras.
We specify the new countable signature. The substitution operations will come from a certain countable semigroup.
Since everything is countable, we fix the dimension to be the least infinite ordinal, namely,  $\omega$.
But we shall deal with algebras having dimension $\alpha$, $\alpha$ a countable ordinal, 
mostly $\alpha$  will be $\omega+n$ with $n\leq \omega$.

We will use the semigroup as a superscript in place of the countable dimension $\alpha$,
that is, we write
${\sf CPEA_{\T}}$, for ${\sf CPEA}_{\alpha}$, where $\sf T$ is the subsemigroup
of $({}^\alpha\alpha, \circ)$ 
specifying the signature. We say simply that $\sf T$ is a semigroup on $\alpha$.
By the same token, set algebras are denoted by
${\sf Gp}_\T$.  The dimension will be implicit in $\T$.
To define the countable
semigroups that specify the signature,
of algebras to be addressed, we need some preparation to do.

The definition of rich and strongly rich semigroups to be formulated next is exactly like in \cite[Definition 1.4]{AU}
to which we refer for notation used.

\begin{definition}\label{rich} Let $\alpha$ be a countable ordinal.
Let $\T$ be a subsemigroup of $({}^{\alpha}\alpha, \circ)$.
We say that $\T$ is {\it rich } if $\T$ satisfies the following conditions:
\begin{enumerate}
\item $(\forall i,j\in \alpha)(\forall \tau\in \T) \tau[i|j]\in \T.$
\item There exist $\sigma,\pi\in \T$, called {\it distinguished elements} of $\T$, such that
$(\pi\circ \sigma=Id,\  \rng\sigma\neq \alpha), $ satisfying
$$ (\forall \tau\in \T)(\sigma\circ \tau\circ \pi)[(\alpha\sim \rng\sigma)|Id]\in \T.$$
\end{enumerate}
\end{definition}
\begin{definition}\cite[Definition 1.4]{AU}.\label{stronglyrich}
Let $\T$ be rich a subsemigroup of $({}^{\alpha}\alpha, \circ)$.
Let $\sigma$ and $\pi$ be as in the previous definition.
If $\sigma$ and $\pi$ satisfy:
\begin{enumerate}
\item $(\forall n\in \omega) |\sf sup(\sigma^n\circ \pi^n)|<\omega, $
\item $(\forall n\in \omega)[\sf sup(\sigma^n\circ \pi^n)\subseteq
\alpha\smallsetminus \rng(\sigma^n)];$
\end{enumerate}
then we say that $\T$ is  {\it a strongly rich} semigroup.
\end{definition}

\begin{example}\label{semigroup}
\begin{enumarab}
\item The semigroup $\sf T$ generated by the set of 
transformations 
$\{[i|j], [i,j], i, j\in \omega, \sf suc, \sf pred\}$ defined on $\omega$ is a 
strongly rich subsemigroup of $(^\omega\omega, \circ)$.
Here $\sf suc$ abbreviates the {\it successor function}
on $\omega$, ${\sf suc}(n)=n+1$, and $\sf \sf pred$ acts as its quasi--right inverse, the 
{\it predecessor
function} on $\omega$, defined by
${\sf pred}(0)=0$ and for other $n\in \omega$, ${\sf pred}(n)=n-1.$

\item \cite{AU, Sain}. One can take $\mathbb{Z}$ as an indexing set instead of $\omega$. 
We denote a function $f$ by $(f(x): x\in \dom f)$.
Let ${\sf T}\subseteq ({}^\mathbb{Z}\mathbb{Z}, \circ)$ 
be the semigroup generated by the following five transformations: 
${\sf shift}=(z+1: z\in \mathbb{Z})$ which is a bijection, 
${\sf shift}^{-1}$, ${\sf suc}=(n+1: n\leq 0)\cup (n: n>0)$ 
and ${\sf pred} =(n: n\leq 0)\cup (n-1: n\geq 1)$ 
and the transposition $[0, 1]$ interchanging $0$ and $1$.
\end{enumarab}
In both cases, the transformation $\sf suc$
plays the role of $\sigma$ while the transformation $\sf pred$ plays the role of
$\pi$, hence $\sf suc$ and $\sf pred$
are the distinguished elements of $\sf T$.
\end{example}

The axiomatization of our algebras is exactly  the same
as the axiomatization in definition \ref{capa}
by restricting the previous signature to the new countable signature.

Our next theorem is crucial.
It says that  rich semigroups are adequate to form $\omega$--dilations for countable algebras, so that algebras in $\sf CPEA_T$ have
the neat embedding property.  The definition of neat reducts and dilations is exactly like the ${\sf CPEA}_{\alpha}$ case
by implementing the  obvious modifications. For an algebra $\A$, and $X\subseteq A$, $\Sg^{\A}X$ denotes the subalgebra of $\A$ generated by $X$. 

\begin{lemma}\label{dl} Let $\T$ be
a countable rich subsemigroup of $({}^\omega\omega, \circ)$
and $\A\in \sf CPEA_{\sf T}$ be countable.   Then there exist a  rich semigroup $\sf S$ on $\omega+\omega$ and an $\omega$--dilation 
$\B\in \CPEA_{\sf S}$ of $\A$, so that $\A\subseteq \Nr_{\omega}\B$. If in addition $\T$ is strongly rich, then $\sf S$ can be chosen to be strongly rich, and in this case, 
for all $X\subseteq A,$  $\Sg^{\A}X=\Nr_{\omega}\Sg^{\B}X$.
In particular, $\A=\Nr_{\omega}\B$ and for all $x\in B$, $|\Delta x \sim \omega|<\omega$.
\end{lemma}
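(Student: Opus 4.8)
The plan is to construct the dilation $\B$ along the lines of the Daigneault--Monk functional dilation already used in the proof of Theorem \ref{main}, but now being careful that the enlarged index set $\omega+\omega$ carries a semigroup $\sf S$ that is still rich (or strongly rich), and that this semigroup is built so that $\A$ sits as a genuine neat reduct, not merely a subalgebra. First I would take, exactly as in the proof of Theorem \ref{main}, the representation $\A\cong \Nr_\omega F(^{\omega+\omega}\omega,\A)$ by forming the minimal functional dilation of $F(^\omega\omega,\A)$ from dimension $\omega$ to $\omega+\omega$, re-defining cylindrifiers $\c_i$ for $i\in\omega+\omega$ by the same conjugation formula ${\sf c}_{i}{\sf s}_{\sigma}^{\B}p={\sf s}_{\rho^{-1}}^{\B}{\sf c}_{(\rho(i)\cap\sigma(\omega))}^{\A}{\sf s}_{(\rho\sigma\restriction\omega)}^{\A}p$, and checking soundness and agreement with the old cylindrifiers; these are the facts cited from \cite[Theorems 3.1, 3.2, 3.10, 4.3--4.4]{DM}. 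The resulting $\B$ has Boolean reduct $F(^{\omega+\omega}\omega,\A)$, so it is atomic, and by the complete-subalgebra argument reproduced in Theorem \ref{main} one gets $\A=\Nr_\omega\B$.

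The genuinely new point is the semigroup. Given the rich $\T\subseteq({}^\omega\omega,\circ)$ with distinguished elements $\sigma,\pi$ (where $\pi\circ\sigma=Id$ and $\rng\sigma\ne\omega$), I would define $\sf S$ to be the subsemigroup of $({}^{\omega+\omega}(\omega+\omega),\circ)$ generated by $\{\bar\tau:\tau\in\T\}$, where $\bar\tau=\tau\cup Id_{(\omega+\omega)\sim\omega}$, together with the transpositions $[i|j]$ and $[i,j]$ for $i,j\in\omega+\omega$ and a pair of distinguished shift-type maps $\bar\sigma,\bar\pi$ on $\omega+\omega$ extending $\sigma,\pi$ and acting appropriately on the new block (in the $\mathbb Z$-indexed model of Example \ref{semigroup} this is transparent: one simply moves to a larger copy of $\mathbb Z$ and re-indexes). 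One then verifies the two clauses of Definition \ref{rich} for $\sf S$: clause (1) holds because $\sf S$ is closed under right multiplication by transpositions by construction, and clause (2) holds with the same $\bar\sigma,\bar\pi$ since $\bar\pi\circ\bar\sigma=Id$, $\rng\bar\sigma\ne\omega+\omega$, and $(\bar\sigma\circ\rho\circ\bar\pi)[((\omega+\omega)\sim\rng\bar\sigma)|Id]\in\sf S$ for every $\rho\in\sf S$ — this is where one must choose $\bar\sigma,\bar\pi$ with enough ``room'' in the new block, mirroring Sain's construction \cite{Sain, AU}. For the strongly rich case one additionally checks clauses (1)--(2) of Definition \ref{stronglyrich}: $|\sup(\bar\sigma^n\circ\bar\pi^n)|<\omega$ and $\sup(\bar\sigma^n\circ\bar\pi^n)\subseteq(\omega+\omega)\sim\rng(\bar\sigma^n)$, which again follow from the corresponding properties of $\sigma,\pi$ on $\omega$ together with the way $\bar\sigma,\bar\pi$ were extended.

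For the final clauses, when $\T$ is strongly rich I would prove $\Sg^\A X=\Nr_\omega\Sg^\B X$ for all $X\subseteq A$ by the standard neat-reduct generation argument: the inclusion $\subseteq$ is immediate since $\A=\Nr_\omega\B$ and neat reducts are closed under subalgebras; for $\supseteq$, every $b\in\Sg^\B X$ is a term in $X$ using finitely many substitutions from $\sf S$ and finitely many cylindrifiers with indices in $\omega+\omega$, so $|\Delta b\sim\omega|<\omega$, and the strong-richness clauses (the finiteness and disjointness of the supports $\sup(\bar\sigma^n\circ\bar\pi^n)$) let one ``cylindrify away'' the finitely many extra dimensions and pull $b$ back inside $\Sg^\A X$ — this is precisely the role of the condition $|\Delta x\sim\omega|<\omega$ asserted at the end of the statement, and it holds for every $x\in B$ because $B$ is generated over $\A$ using only the new shift maps whose supports are finite modulo $\omega$. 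Taking $X=A$ gives $\A=\Nr_\omega\B$ once more and records the dimension-set bound. \textbf{The main obstacle} I anticipate is the careful bookkeeping in constructing $\bar\sigma,\bar\pi$ on $\omega+\omega$ so that \emph{all} of richness (or strong richness) is preserved while simultaneously $\A$ remains exactly the $\omega$-neat reduct; getting the new block large enough for clause (2) of richness without destroying the finite-support condition of strong richness is the delicate balance, and it is exactly here that one leans on the explicit semigroup of Example \ref{semigroup} rather than an abstract argument.
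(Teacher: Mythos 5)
Your outline correctly identifies what has to be delivered, but the route you choose has a genuine gap. You propose to build $\B$ as the Daigneault--Monk minimal functional dilation $F({}^{\omega+\omega}\omega,\A)$ with cylindrifiers re-defined by the conjugation formula, i.e.\ the construction used in the proof of Theorem \ref{main}. That construction is only available when the signature contains ${\sf s}_{\tau}$ for \emph{every} $\tau\in{}^{\alpha}\alpha$: the embedding $H(p)(x)={\sf s}_{x}p$ needs ${\sf s}_{x}$ for arbitrary $x\in{}^{\alpha}\alpha$; the canonical form ${\sf s}_{\sigma}p$ with $\sigma\upharpoonright\alpha$ injective for elements of the dilation is a theorem about \emph{full} polyadic transformation systems; and the formula ${\sf c}_{i}{\sf s}_{\sigma}^{\B}p={\sf s}_{\rho^{-1}}^{\B}{\sf c}_{(\rho(i)\cap\sigma(\alpha))}^{\A}{\sf s}_{(\rho\sigma\upharpoonright\alpha)}^{\A}p$ invokes substitutions by permutations $\rho$ of the enlarged index set that have no reason to lie in a countable semigroup $\sf S$. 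In $\CPEA_{\T}$ only the ${\sf s}_{\tau}$ with $\tau\in\T$ exist, so none of the facts you cite from \cite{DM} apply as stated; this is precisely the obstacle that forces a different construction in the countable case, and your ``delicate balance'' paragraph does not engage with it.

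The paper instead follows Sain and \cite{AU}: for each finite $n$ one builds an $(\omega+n)$--dimensional dilation $\A_{n}$ \emph{on the same universe} $A$ by re-indexing the operations of $\A$ along the bijection $\rho_{n}:\omega+n\to\omega$ given by $\rho_{n}\upharpoonright\omega={\sf suc}^{n}$ and $\rho_{n}(\omega+i)=i$, the neat embedding being $e_{n}={\sf s}_{{\sf suc}^{n}}^{\A}$ --- this is exactly where the distinguished element $\sigma$ of a rich semigroup earns its keep, since applying it $n$ times vacates $n$ slots of $\omega$ to serve as the new dimensions. The $(\omega+\omega)$--dimensional $\B$ is then obtained as an ultraproduct $\Pi_{n/U}\A_{n}^{+}$ of expansions of the $\A_{n}$, with $\sf S$ generated by $\{\bar{\tau}:\tau\in\T\}$ and the finite transformations; strong richness is what yields $\Sg^{\A}X=\Nr_{\omega}\Sg^{\B}X$ and hence $\A=\Nr_{\omega}\B$ (your closing ``cylindrify away'' argument for this clause is fine once the dilation exists, and is essentially the complete-subalgebra computation of Theorem \ref{main}). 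To repair your proof you must either adopt this step-plus-ultraproduct argument, or supply a genuinely new verification that a functional dilation can be carried out over the restricted semigroup $\T$ --- the latter is not routine and is not what \cite{DM} provides.
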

\begin{proof}
We assume a particular rich semigroup $\T$, namely, that generated by finite transformations together with
$\sf suc$, $\sf pred$, together with all replacements and transpositions.\footnote{We note that transpositions are definable \cite{AU}.}
The general case is entirely analogous \cite[Remark 2.8, pp.327]{AU}.
We follow verbatim \cite[pp.323--336]{AU}, except that, in addition, we have to check that homomorphisms
hitherto defined preserve the diagonal elements.
This part easily follows from the axiom that ${\sf s}_{\tau}{\sf d}_{ij}={\sf d}_{\tau(i),\tau(i)}.$

For $n\leq \omega$, let  $\alpha_n=\omega+n$
and $M_n=\alpha_n\sim \omega$.
Note that when $n\in \omega$, then $M_n=\{\omega,\ldots,\omega+n-1\}$.
For $\tau\in \T$, let $\tau_n=\tau\cup Id_{M_n}$. $\T_n$ denotes the
subsemigroup of $({}^{\alpha_n}\alpha_n,\circ )$ generated by
$\{\tau_n:\tau\in G\} \cup \{[i|j],[i,j]: i< j\in \alpha_n\}$.
For $n\in \omega$,  let $\rho_n:\alpha_n\to \omega$
be the bijection defined by
$\rho_n\upharpoonright \omega={\sf suc}^n$ and $\rho_n(\omega+i)=i$ for all $i<n$.
Let $n\in \omega$. For $v\in T_n,$ let $v'=\rho_n\circ v\circ \rho_n^{-1}$.
Then $v'\in G$.
For $\tau\in \T_{\omega}$, let
$D_{\tau}=\{m\in M_{\omega}:\tau^{-1}(m)=\{m\}=\{\tau(m)\}\}$.
Then $|M_{\omega}\sim D_{\tau}|<\omega.$

Let $\A$ be the given  countable algebra in $\sf CPEA_T$.
Let $\A_n$ be the algebra defined as follows:
$\A_n=( A, +, \cdot, 0, 1, {\sf c}_i^{\A_n},{\sf s}_v^{\A_n}, {\sf d}_{ij})_{ij\in \alpha_n,v\in \T_n}$
where for each $i\in \alpha_n$ and $v\in \T_n$,
${\sf c}_i^{\A_n}:= {\sf c}_{\rho_n(i)}^{\A} \text { and }{\sf s}_v^{\A_n}:= {\sf s}_{v'}^{\A}.$
Let $\Rd_{\omega}\A_n$ be the following reduct of
$\A_n$ obtained by restricting the signature of $\A_n$ to the first
$\omega$ dimensions:
$\Rd_{\omega}\A_n=(A_n, +, \cdot, -, 0, 1, {\sf c}_i^{\A_n}, {\sf s}_{\tau_n}^{\A_n}, {\sf d}_{ij})_{i,j\in \omega,\tau\in \T}.$
For $x\in A$, let $e_n(x)={\sf s}_{suc^n}^{\A}(x)$. Then $e_n:A\to A_n$ and  $e_n$ is an embedding from $\A$ into $\Nr_{\omega}\A_n$.
From strong richness of $\T$, it follows that $e_n(\Sg^{\A}Y)=\Nr_{\omega}(\Sg^{\A_n}e_n(Y))$ for all
$Y\subseteq A$, cf. \cite[Claim 2.7]{AU}.
While $\sigma$ and condition (2) in definition  \ref{rich},
are needed to implement the neat embedding, the left inverse $\pi$ of $\sigma$, together with the condition of {\it strong richness} is needed to
show that forming neat reducts commute with forming subalgebras, in the sense that (upon identifying $e_n$ with the identity map) for all 
$X\subseteq \A$, $\Sg^{\A}X=\Sg^{\Nr_{\omega}\A_n}X=\Nr_{\omega}\Sg^{\A_n}X$.
In particular, $\A$ is the full $\omega$--neat reduct
of $\A_n$, that is, $\A=\Nr_{\omega}\A_n$.  

Now let $\alpha=\omega+\omega$. To extend the neat embedding part to infinite dimensions, one constructs
 $\B\in \sf CPEA_{\sf S}$ as an ultraproduct of expansions $\A_n^+$ ($n\in \omega$) of the algebras $\A_n$ 
to the signature of ${\sf CPEA}_{\sf S}$, relative to any non--trivial  ultrafilter $U$ say, on $\omega$ \cite{AU}.
Here $\sf S$ is the subsemigroup of $({}^\alpha\alpha, \circ)$ generated by the set $\{\bar{\tau}: \tau\in \T\}\cup \{[i,j], [i|j]: i< j\in \alpha\}$, 
where $\bar{\tau}=\tau\cup Id_{\alpha\sim \omega}$. Then $\A$ neatly embeds into $\B=\Pi_{n/U}\A_n^+$, that is $\A\subseteq \Nr_{\omega}\B$ \cite{AU, Sain}.
Using strong richness of $\T$,  one proves, exactly like in \cite{AU}, that $\sf S\subseteq ({}^{\alpha}\alpha, \circ)$ is strongly rich, too, and  
that for all $X\subseteq \A$, $\Sg^{\A}X=\Sg^{\Nr_{\omega}\B}X=\Nr_{\omega}\Sg^{\B}X$.
\end{proof}
We need some more definitions.
\begin{definition}\label{o}
\begin{enumarab}
\item Let $\A\in \CPEA_{\sf T}$ and $X\subseteq \A$. Then $X$ is said to be {\it a non--principal type} if $\prod X=0$.

\item Let $\kappa$ be a cardinal $\leq  2^{\omega}$. We say that a countable algebra $\A\in \CPEA_{\sf T}$  {\it admits a $\kappa$ omitting types theorem} if
whenever $\lambda <\kappa$ and  $(X_i: i<\lambda)$ a family of non--principal types, then there are a countable
algebra $\B\in \Gp_{\sf T}$ and an isomorphism $f:\A\to \B$, such that
$\bigcap_{x\in X_i}f(x)=\emptyset$ for each $i\in \lambda$. 

\item An algebra $\A$ generated by $\beta$ has the {\it interpolation property with respect to $\beta$}, or simply
the interpolation property,
if for all non--empty sets $X_1, X_1\subseteq \beta$, for all $a,b\in A$, whenever $a\in \Sg^{\A}X_1$ and $b\in \Sg^{\A}X_2$
are such that $a\leq b$, then there exists $c\in \Sg^{\A}(X_1\cap X_2)$ satisfying $a\leq c\leq b$. 
\end{enumarab}
\end{definition}
\begin{theorem}\label{interpolation}  Let $\sf T$ be a countable rich subsemigroup of $({}^{\omega}\omega, \circ)$. 
In items (2)-(6) 
we assume that $\T$ is strongly rich.
\begin{enumarab}
\item Every algebra in $\sf CPEA_{\T}$ is representable,

\item Every countable atomic algebra in $\sf CPEA_{\T}$ is completely representable. Furthermore, the condition of countability cannot be omitted,

\item Every countable algebra in $\sf CPEA_{\T}$ admits a $\mathfrak{p}$ omitting types theorem,

\item  Every countable simple algebra (has no proper ideals) in $\sf CPEA_{\T}$ admits a $\sf cov K$
omitting types theorem, 

\item The statement of `omitting $< {}2^{\omega}$ many types' is independent from the axioms of set theory.
Assuming Martin's axiom, then for any cardinal $\lambda<2^{\omega}$
every countable algebra in $\sf CPEA_{\T}$
admits a $\lambda$  omitting types theorem,

\item The free algebras having $\leq \omega$ free generators have the interpolation property.
\end{enumarab}

In particular, the corresponding logic
enjoys the omitting types theorem, the Craig interpolation theorem, the Beth definability property
and a Vaught's theorem, namely, countable atomic theories have atomic models.
\end{theorem}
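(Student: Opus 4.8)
The common device is Lemma \ref{dl}: a countable $\A\in\sf CPEA_{\T}$ is embedded into $\Nr_{\omega}\B$ for a dilation $\B\in\sf CPEA_{\sf S}$ of dimension $\omega+\omega$ (and, when $\T$ is strongly rich, $\A=\Nr_{\omega}\B$ with $\A$ a \emph{complete} subalgebra of $\B$ and $|\Delta x\smallsetminus\omega|<\omega$ for all $x\in B$), after which one re-runs on $\B$ the Stone--space construction from the proof of Theorem \ref{main}, extracting a relativized representation of $\A$ over a base of the form $\Gamma/\sim$ from a single well--chosen Boolean ultrafilter of $\B$. For (1), I would combine Lemma \ref{dl} with a Henkin--style argument in the spirit of Ferenczi's completeness theorem \cite{f}: along an iterated dilation one builds an ultrafilter while adding, for each cylindrifier requirement, a witness $\s_{k}^{j}p$ (available thanks to richness), and then reads off the relativized representation as in the proof of Theorem \ref{main}; for atomic algebras this shortcuts to a single \emph{principal} ultrafilter, which is isolated in the Stone space and hence automatically avoids every cylindrifier--preservation nowhere dense set. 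For (2), strong richness yields $\A=\Nr_{\omega}\B$ with $\A$ complete in $\B$; passing through the functional dilation of the proof of Theorem \ref{main} (whose Boolean reduct is a product of copies of $\A$, hence atomic when $\A$ is) and restricting to $\Sg^{\B}A$, we may take $\B$ countable and atomic, so its principal ultrafilters are dense in the Polish Stone space and the argument of Theorem \ref{main} separates any prescribed non--zero $c$ by a complete representation; elementarity of the completely representable class follows from the characterization in \cite{HH}. That countability is essential in (2) I would witness by an uncountable atomic algebra in $\sf CPEA_{\T}$ with no complete representation, obtained by adapting the standard constructions of non--completely--representable atomic algebras (for instance a suitable ultrapower of a countable atomic one).

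\textbf{Items (3)--(5), omitting types.}
Now $\A$ is countable but not assumed atomic, so Baire category replaces the isolated points. Dilate (under strong richness) to a countable $\B$ of dimension $\omega+\omega$ with $\A=\Nr_{\omega}\B$ complete in $\B$ and every element of $B$ of finite support outside $\omega$; write $S$ for its Polish Stone space. To each non--principal type $X_i$ of $\A$ one attaches the closed nowhere dense sets $\bigcap_{x\in X_i}N_{\s^{\B}_{\sigma}x}$, $\sigma$ admissible --- closed with empty interior because $\prod^{\B}X_i=\prod^{\A}X_i=0$ and admissible substitutions, being completely additive Boolean endomorphisms, preserve that infimum --- and together with the cylindrifier--preservation sets $G_{\tau,j,p}$ from the proof of Theorem \ref{main} (diagonals being handled by the congruence on $\Gamma$) these are the sets the chosen ultrafilter $F$ must miss. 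The pivotal point is that, $\B$ being countable with elements of finite support outside $\omega$, it is enough to control a \emph{countable} family of admissible substitutions, so for $\lambda<\mathfrak p$ many types one avoids $<\mathfrak p$ meager sets; their union is meager, a generic $F$ inside any prescribed basic clopen exists, and the representation built from it omits every $X_i$. For (4) the algebra is simple, so its representation is generated by a single point of the base and a single generic ultrafilter suffices; one point avoids $<\sf cov K$ meager sets by the very definition of $\sf cov K$, which sharpens the cardinal. For (5), Martin's axiom forces $\mathfrak p=2^{\omega}$, so (3) gives omission of $<2^{\omega}$ types; for the consistency of the failure one builds in ZFC a countable algebra together with $\mathfrak p$ non--principal types that cannot be simultaneously omitted, the family being read off a witness to the definition of $\mathfrak p$, whence independence.

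\textbf{Item (6) and the metalogical corollary.}
For (6) I would first prove that $\sf CPEA_{\T}$ has the (super) amalgamation property --- by dilating the given algebras to dimension $\omega+\omega$ via Lemma \ref{dl}, amalgamating the dilations (this is the step where the rich substitutions are indispensable, in the manner of the Daigneault--Monk treatment of polyadic algebras \cite{DM}), taking neat reducts, and noting that the amalgam can be realised as a set algebra in $\Gp_{\T}$, set algebras super--amalgamating by a construction on their base sets --- and then specialise: for a free algebra $\A$ on $\beta\leq\omega$ generators and $a\in\Sg^{\A}X_1$, $b\in\Sg^{\A}X_2$ with $X_1,X_2\subseteq\beta$ and $a\leq b$, amalgamating $\Sg^{\A}X_1$ and $\Sg^{\A}X_2$ over $\Sg^{\A}(X_1\cap X_2)=\Sg^{\A}X_1\cap\Sg^{\A}X_2$ (the last identity itself coming from the neat embedding theorem) produces an interpolant $c\in\Sg^{\A}(X_1\cap X_2)$ with $a\leq c\leq b$. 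Finally, the metalogical statements are the usual algebraic translations: the omitting types theorem for the logic is item (3) (its set--theoretic status recorded by (5)), Craig interpolation is item (6) transported through the Blok--Pigozzi algebraisation \cite{bp}, Beth definability follows from Craig interpolation for algebraisable logics, and Vaught's theorem --- atomic theories have atomic models --- is the metalogical face of item (2), the Lindenbaum--Tarski algebra of an atomic theory being atomic and a complete representation of it being precisely an atomic model.

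\textbf{Main obstacle.}
I expect the crux to be the cardinality bookkeeping behind (3)--(5): verifying that the nowhere dense sets one must avoid in the Stone space of the countable dilation $\B$ can be organised into fewer than $\mathfrak p$ many (respectively $\sf cov K$ many, in the simple case) --- equivalently, that controlling a countable family of admissible substitutions suffices --- which is exactly where the finite--support property $|\Delta x\smallsetminus\omega|<\omega$ and strong richness do the work; a secondary difficulty is amalgamating the $(\omega+\omega)$--dimensional dilations in (6) without leaving the restricted signature $\sf S$.
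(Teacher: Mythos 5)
Your items (1)--(5) follow the paper's own route: dilate via Lemma \ref{dl}, work in the Stone space of the countable dilation $\B$, avoid the nowhere dense sets coming from the cylindrifier suprema and from the types, and read off the representation from a perfect ultrafilter; the cardinality bookkeeping you single out ($\sf adm$ countable, $\A$ a complete subalgebra of $\B$ via the finite--support property) is exactly the paper's. Three points need repair, though. First, for (1) your argument only reaches countable algebras; the paper gets the general case by the downward L\"owenheim--Skolem theorem plus the facts that $\CPEA_{\T}$ is a variety and representability is preserved under elementary equivalence --- you should say this. Second, in (3) the required conclusion is an \emph{isomorphism} onto a $\Gp_{\T}$; a single generic ultrafilter only yields a non--injective $f_a$ with $f_a(a)\neq 0$, and the paper passes to the product ${\bf P}_{a\in A}\D_a$ to get injectivity. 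Third, and more seriously, your witness for ``countability cannot be omitted'' in (2) is only gestured at: there is no reason an ultrapower of a countable atomic algebra should fail to be completely representable. The paper's actual argument is the Hirsch--Hodkinson counting argument: any completely representable $\C$ with $\C\models{\sf d}_{01}<1$ has $2^{\omega}$ atoms, so a countable elementary subalgebra of $\wp({}^{\omega}2)$ is atomic and elementarily equivalent to a completely representable algebra yet cannot itself be completely representable, whence the class is not elementary. Without some such concrete construction this half of (2) is a gap.

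Item (6) is where you genuinely diverge, and the direction of your reduction is the reverse of the paper's. You propose to establish (super)amalgamation for $\CPEA_{\T}$ first --- by ``amalgamating the dilations'' --- and then read off interpolation in the free algebra; but the amalgamation of the $(\omega+\omega)$--dimensional dilations is precisely the step that needs an argument, and you never supply one (nor is the identity $\Sg^{\A}X_1\cap\Sg^{\A}X_2=\Sg^{\A}(X_1\cap X_2)$ that you invoke justified or needed). The paper goes the other way: assuming $a\leq c$ with no interpolant, it shows that the filter $H$ generated in $\Sg^{\B}(X_1\cap X_2)$ by the two Henkin--style sets $Y_1$ (containing $a$ and the witness terms for $\Sg^{\B}X_1$) and $Y_2$ (containing $-c$ and the witnesses for $\Sg^{\B}X_2$) is proper --- the base case of the induction being exactly the non--existence of an interpolant --- extends $H$ to an ultrafilter $H^*$ and then to perfect ultrafilters $F_1, F_2$ agreeing on the common subalgebra, builds the two representations $f_1, f_2$ as in item (3), pastes them by freeness of $\A$, and derives $h(a\cdot-c)\neq 0$, a contradiction; $\sf SUPAP$ is then deduced afterwards as Corollary \ref{supap}. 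As written, your item (6) defers the entire content to an unproved amalgamation claim and so is a genuine gap.
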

\begin{proof}

(1)  The proof that every {\it countable} algebra $\A\in \sf CPEA_{\T}$ 
is representable can be easily discerned below 
the surface of the proofs of item (3) and the last item proving interpolation. 
Here strong richness is not needed because one takes the non--principal type $X=\{0\}$ which is plainly preserved in any 
$\omega$--dilation $\B$ of $\A$ in the sense that $\prod^{\B}X=0$. We do not need that $\A=\Nr_{\omega}\B$, $\A\subseteq \Nr_{\omega}\B$ is enough.
Proving representability of countable algebras suffices, because $\CPEA_{\T}$ is a variety. 
So if $\A\in \sf CPEA_{\T}$, then by the downward \ls--Tarski theorem, 
$\A$ has an elementary countable subalgebra  which is representable by the above, 
so $\A$ is representable, too, since representability is preserved under elementary equivalence. 

(2) The second part is like the proof of theorem \ref{main}, using the second part in lemma \ref{dl} by
undergoing the obvious modifications, namely, restricting
everything to to be countable, the given algebra and the  new countable signature. Here strong richness is needed, so that the sum of co--atoms in the algebra $\A$ 
is the same as its sum in the $\omega$--dilation $\B$. Both sums are the (common) top element, in symbols, 
$\sum^{\A}\At\A=\sum^{\B}\At\A=1$, when (by strong richness) $\A=\Nr_{\omega}\B$.
By additivity of (admissable) substitutions, we will have $\sum \s_{\tau}\At\A=1$ 
for any such (admissable) $\tau$. The rest of the proof is identical to the proof of theorem \ref{main}. The proof 
of this item is also a special case of the proof of the next one when we consider the one non--principal type consisting of 
co--atoms.

To show that the countability condition cannot be dispensed with, we show that there are atomic (uncountable) algebras in $\CPEA_\T$ that are not completely representable.
It clearly suffices to show that the class of completely representable $\sf CPEA_{\T}$s, $\bold K$ for short, 
is not elementary, because atomicity is a first order property. We do this using a cardinality argument. In fact, what we show 
is more than needed.   Using exactly the same argument in \cite{HH}, 
one first shows that if $\C\in \K$ and 
$\C\models {\sf d}_{01}<1$, then $\At\C=2^{\omega}$. The argument is as follows: 
Suppose that $\C\models {\sf d}_{01}<1$. Then there is $s\in h(-{\sf d}_{01})$ so that if $x=s_0$ and $y=s_1$, we have
$x\neq y$. Fix such $x$ and $y$. For any $J\subseteq \omega$ such that $0\in J$, set $a_J$ to be the sequence with
$i$th co-ordinate is $x$ if $i\in J$, and is $y$ if $i\in \omega\sim J$.
By complete representability every $a_J$ is in $h(1^{\C})$ and so it is in
$h(x)$ for some unique atom $x$, since the representation is an atomic one.
Let $J, J'\subseteq \omega$ be distinct sets containing $0$.   Then there exists
$i<\omega$ such that $i\in J$ and $i\notin J'$. So $a_J\in h({\sf d}_{0i})$ and
$a_J'\in h (-{\sf d}_{0i})$, hence atoms corresponding to different $a_J$'s with $0\in J$ are distinct.
It now follows that $|\At\C|=|\{J\subseteq \omega: 0\in J\}|=2^{\omega}$.

Take $\D\in {\sf CPEA}_{\T}$ with universe $\wp({}^{\omega}2)$ and with operations defined the usual way (as in set algebras). 
Then $\D\models {\sf d}_{01}<1$ and plainly $\D\in \K$. 
Using the downward \ls--Tarski theorem, take a countable  elementary subalgebra $\B$ of $\D$.
This is possible because the signature of $\sf CPEA_{\T}$ is countable.
Then in $\B$ we have $\B\models {\sf d}_{01}<1$ because $\B\equiv \C$. But $\B$ is not completely 
representable, because if it were then by the above argument, we get that $|\At\B|=2^{\omega}$,
which is impossible because $\B$ is countable. We have $\D\in \bold K$, $\B\notin \bold K$ and 
$\D\equiv \B$, thus $\bold K$ is not elementary.

(3)  For the third item, we can assume by the second part of lemma \ref{dl}, 
using  strong richness of $\T$,  that $\A=\Nr_{\omega}\B$ where  $\B$ is an $\omega+\omega$--dimensional
dilation provided by the lemma. We can further assume that $A$ generates $\B$, and so $B$ is countable because both $A$ and the signature
are countable.
Fix a cardinal $\lambda< \mathfrak{p}$. We are given a family
of non--principal types $(X_i: i<\lambda)$ that we want to omit.
Now we work like in theorem \ref{main},  but instead of dealing with only one non--principal type,
namely, the type consisting of co--atoms, we now have $\lambda$--many types to omit, and $\lambda$ can well be uncountable, because it is consistent
that $\omega_1<\mathfrak{p}=2^{\omega}$ \cite{Fre}.
So here we can (and will) appeal to the Baire category lied to a reduction of such types
to countably many using the properties of $\mathfrak{p}$.

We show that for any non--zero $a\in \A$, there exist a countable $\D_a\in {\sf Gp}_{\T}$ having a countable base, and a
homomorphism (that is not necessarily injective)
$f_a:\A\to \D_a$,  such that $f_a(a)\neq 0$ and for all $i<\lambda$, $\bigcap_{x\in X_i} f_a(x)=\emptyset$.
From these $f_a$'s ($a\in A)$ we obtain the
required isomorphism that omits the given family of non--principal types by taking the
product $\D={\bf P}_{a\in A}\D_a$ which is countable since the index set $A$ and each $\D_a$ are  countable.
One then defines
$f:\A\to \D$ by $f(a)=(f_a(a): a\in \A)$.
Then $f$ is clearly injective because if $a\in \A$, then $f_a(a)\neq 0,$ hence $f(a)\neq 0$. Furthermore, it is of course onto $f(\A)\subseteq \D$, and it 
omits the given family of $\lambda$ non--principal types.
Because a subalgebra of a set algebra is a set algebra, then $f(\A)$ will give the required representation omitting the given set of non--principal
types, via $f$.

For the sake of brevity, let $\alpha=\omega+\omega$. Let $\sf adm$ be  the set of admissible substitutions in $\T$. In the present context 
$\tau\in \sf adm$ if $\dom\tau\subseteq \omega$ and $\rng\tau\cap \omega=\emptyset$.  Since $\T$ is countable, we have
$|\sf adm|\leq\omega$. In fact, we have $|\sf adm|=\omega$ because  $\sf adm$ contains any finite function $f:\alpha\to \alpha$
such that $\dom f\subseteq \omega$ and $\rng f\cap \omega=\emptyset$.  Indeed, for any $n\in \omega$ define the function $f_n:\alpha\to \alpha$  
by $f_n(n)=\omega$ 
and $f_n(i)=i$ otherwise. Then $\dom f_n=\{n\}$, $\rng f_n=\{\omega\}$ and clearly for $n\neq m$, $f_n\neq f_m,$ because they have different 
domains.
Then we have, as in theorem \ref{main}, for all $i< \alpha$, $p\in \B$ and $\sigma\in \sf adm$,
\begin{equation}\label{tarek1}
\begin{split}
\s_{\sigma}{\sf c}_{i}p=\sum_{j\in \alpha} \s_{\sigma}{\sf s}_j^ip.
\end{split}
\end{equation}
By $\A=\Nr_{\omega}\B$ we also have, for each $i<\lambda$, $\prod^{\B}X_i=0$, since $\A$ is a complete subalgebra of $\B$ as proved in theorem \ref{main}.
Because substitutions are completely additive, we have
for all $\tau\in \sf adm$ and all $i<\lambda$,
\begin{equation}\label{t1}
\begin{split}
\prod {\sf s}_{\tau}^{\B}X_i=0.
\end{split}
\end{equation}
For better readability, for each $\tau\in \sf adm$, for each $i\in \lambda$, let
$$X_{i,\tau}=\{{\sf s}_{\tau}x: x\in X_i\}.$$
Then by complete additivity, we have:
\begin{equation}\label{t2}\begin{split}
(\forall\tau\in {\sf adm})(\forall  i\in \lambda)\prod {}^{\B}X_{i,\tau}=0.
\end{split}
\end{equation}
Let $S$ be the Stone space of the Boolean part of $\B$, and for $x\in \B$, let $N_x$
denote the clopen set consisting of all
Boolean ultrafilters that contain $x$.
Then from (\ref{t1}) and (\ref{t2}), it follows that for $x\in \B,$ $j<\alpha,$ $i<\lambda$ and
$\tau\in \sf adm$, the sets
$$\bold G_{\tau,j,x}=N_{{\sf s}_{\tau}{\sf c}_jx}\setminus \bigcup_{i} N_{{\sf s}_{\tau}{\sf s}_i^jx}
\text { and } \bold H_{i,\tau}=\bigcap_{x\in X_i} N_{{\sf s}_{\tau}x}$$
are closed nowhere dense sets in $S$.
Also each $\bold H_{i,\tau}$ is closed and nowhere
dense.
Let $$\bold G=\bigcup_{\tau\in \sf adm}\bigcup_{i\in \alpha}\bigcup_{x\in B}\bold G_{\tau, i,x}
\text { and }\bold H=\bigcup_{i\in \lambda}\bigcup_{\tau\in  \sf adm}\bold H_{i,\tau}.$$
By the definition of $\mathfrak{p}$, $\bold H$ is meager, since it is a $\lambda$--union of nowhere dense sets and a 
nowhere dense set is obviously meager. 
By the definition of $\mathfrak{p}$, $\bold H$ is a countable collection of nowhere dense sets.
By the Baire Category theorem  for compact Hausdorff spaces,
we get that $X=S\smallsetminus \bold H\cup \bold G$ is dense in $S$,
since $\bold H\cup \bold G$ is meager, because $\bold G$ is meager, too, since
$\sf adm$, $\alpha$ and $\B$ are all countable.
Accordingly, let $F$ be an ultrafilter in $N_a\cap X$, then by (5), and 
definition,  $F$ is perfect.

Factor out the set $\Gamma=\{i\in \alpha:\exists j\in \omega: {\sf c}_i{\sf d}_{ij}\in F\}$ by the congruence
relation $k\sim l$ iff ${\sf d}_{kl}\in F.$ Then like in the proof of theorem \ref{main} 
$\omega\subsetneq \Gamma$ because
$\c_i{\sf d}_{ij}=1$ for all $i<j\in \alpha$, and 
$(\forall i\in \omega)(\exists j\not\in \alpha\sim \omega)({\sf d}_{ij}\in F$).   Let $M=\Gamma/\sim$. 
We define the representation function $f_a$ as in the proof of theorem \ref{main} using the thereby
obtained perfect ultrafilter
$F$ which contains $a$ and is outside $\bold H\cup \bold G$ as follows.
For $\tau: \omega\to \Gamma$, let  $\bar{\tau}:\omega\to M$ be defined via: 
$i\mapsto \tau(i)/\sim$ ($i\in \omega$). 
Let $V=\{\bar{\tau}\in {}^{\omega}M: \tau:\omega\to \Gamma: \omega\cap \rng(\tau)=\emptyset\}$.
Then $V\neq \emptyset$ because $\omega\subsetneq \Gamma$.
For $x\in A$, let $f_a(x)=\{\bar{\tau}\in V: \s_{\tau^+}^{\B}x\in F\}$ where $\tau^+=\tau\cup Id_{\alpha\sim \omega}\in {\sf adm}$.
This map, as before, is well defined and
the target (representing) set algebra  has countable base  $M$. Also $Id\in f_a(a)$ since $a\in F$,  
hence $f_a(a)\neq 0$. 

Observe that $Id\in \sf adm$ because $\dom(Id)=\rng(Id)=\emptyset$.
Showing that $f_a$ preserves cylindrifiers is exactly 
like in the proof of theorem \ref{main} by using that $F\notin \bold G$. The 
preservation of the other operations is straightforward to check using that the substitution operations are Boolean endomorphisms,
and that for $\tau, \sigma \in \T$, $\s_{\tau}\circ \s_{\sigma}=\s_{\tau\circ \sigma}$, 
so $f_a$ is a homomorphism. For omitting the given family 
of non--principal types, we use that 
$F$ is outside $\bold H$, too.  This means (by definition) that for each $i<\lambda$ and each  $\tau\in  \sf adm$
there exists $x\in X_i$, such that
${\sf s}_{\tau}^{\B}x\notin F$. Let $i<\lambda$. If $\bar{\tau}\in V\cap \bigcap _{x\in X_i}f_a(x)$, 
then $\s_{\tau^+}^{\B}x\in F$ which  is impossible because $\tau^+\in \sf adm$. 
We have shown that for each $i<\lambda$, $\bigcap_{x\in X_i}f_a(x)=\emptyset.$
Thus $f_a$ omits  the given family of non--principal types, and we are done. By the special case 
proved in previous item here countability is essential as well.

(4)  Now in case $\A$ is simple, we can prove a stronger result. Simplicity of $\A$ means that the corresponding theory is complete.
Assume that $\A$ is simple, and let $(X_i: i<\lambda)$ be the given family of non--principal types with $\lambda<\sf covK$.
By lemma \ref{dl}, let $\B$ be an $\omega$--dilation of $\A$ such that $\A=\Nr_{\omega}\B$.
Define $\bold H$ and $\bold G$ like in the previous item. By the properties of $\sf covK$, the union $\bold H\cup \bold G\subseteq S$ (where $S$ is the Stone space of $\B$)
does not cover $S$, because $S$ is a Polish space.
Accordingly, we know that there is a Boolean perfect ultrafilter  $F$ of $\B$ in $S\sim \bold H\cup \bold G$.
Define $f$ as in the previous item via the perfect ultrafilter $F.$

Let $a\in A$ be non--zero. Then  $a\in F$ or $-a\in F$ hence $Id\in f(a)$ or $Id\in f(-a)$. It follows that $f\neq 0$
and by simplicity it is injective. The preservation
of the required meets and joins
is proved exactly like above.  Here we do not guarantee that
$S\sim \bold H\cup \bold G$ is dense which was the case in the previous item. So if $\A$ were not simple and $a\neq 0$, then we might not find a perfect ultrafilter containing
$a$ as in the last item, for $N_a\cap (S\sim \bold H\cup \bold G)$ could  well be empty. 
Recall that it is consistent that $\sf covK<\mathfrak{p}$, so that 
this result is stronger than that proved in the previous item 
when restricted to simple algebras.

(5) The independence is proved similarly to \cite[Theorem 3.2.8]{Sayed} together with the fact that
both cardinals $\sf covK$ and $\mathfrak{p}$ can be forced (using iterated forcing)
to be any uncountable regular cardinal between $\omega$ and  $2^{\omega}$.
Martin's axiom ($\sf MA$) forces that $\mathfrak{p}=\sf cov K=2^{\omega}$, so using  item (3) above, we get the required.

One can give a more direct proof that does not depend on forcing. 
It is known \cite[Theorem 1, pp. 492]{ma}, that $\sf MA$ implies that if $\lambda<2^{\omega}$ and
$X$ is a compact and Hausdorff space satisfying the countable chain condition $(\sf ccc)$, then the $\lambda$ union of nowhere dense sets in $X$
is a countable  union. Any second countable topological space satisfies the $\sf ccc$. In particular, the Stone space of the given countable algebra 
satisfies the $\sf ccc$. Thus, assuming $\sf MA$, $\bold H\cup \bold G$
as defined above, is a countable union.
An application of the
Baire Category theorem
finishes the proof.

(6)    Let $\A$ be the free $\CPEA_{\T}$ on $\omega$ generators.  We show that $\A$ has the interpolation property.
Let $\B\in \CPEA_{\omega+\omega}$, such that $\A=\Nr_{\omega}\B$
and $A$ generates $\B$. Note that both $\A$ and $\B$ are countable. Such
dilations exist like before by lemma \ref{dl} since $\T$ is strongly rich.
For the sake of brevity, again let $\alpha$ denote $\omega+\omega$.
This time we will use two perfect ultrafilters
to build two representations giving the required result. We proceed contrapositively.

Assume that $X_1, X_2\subseteq \A$, $a\in \Sg^{\A}X_1$ and $c\in \Sg^{\A}X_1$ such that $a\leq c$.
We want to find an interpolant, that is, we want to find $b\in \Sg^{\A}(X_1\cap X_2)$, such that $a\leq b\leq c$.  Assume for contradiction that there is no interpolant in $\A$.
Then we claim that there is no such interpolant in $\B$. Here we use $\A=\Nr_{\omega}\B$ (so strong richness of $\T$ is essential). 
Indeed, if there is an interpolant of $a$ and $c$ in $\B$, $b$ say,
then by the last part of lemma \ref{dl} the set $J=\Delta c\cap (\alpha\sim \omega)$ will be finite, hence one can cylindrify the indices
in  $J$ in any fixed order
obtaining the  interpolant $\c_{(J)}b\in \Nr_{\omega}\B=\A$. 
To see why $\c_{(J)}b$ is an interpolant of $a\leq c$ in $\A$, observe that cylindrification 
on $J$ does not alter $a$ nor $c$, because they are in the neat
reduct $\A$,  so that  we have $a=\c_{(J)}a\leq \c_{(J)}b\leq \c_{(J)}c=c$.

Now the non--existence of an interpolant of $a$ and $c$ even in the (bigger) $\omega$--dilation $\B$,
will eventually lead to a contradiction, as we proceed to show.
Arrange $\adm\times\alpha \times \Sg^{\B}X_1$
and $\adm\times\alpha\times \Sg^{\B}X_2$
into $\alpha$-termed sequences:
$\langle (\tau_i,k_i,x_i): i\in \alpha\rangle\text {  and  }\langle (\sigma_i,l_i,y_i):i\in \alpha\rangle,
\text { respectively.}$
Thus we can define by recursion (or step-by-step)
$\alpha$-termed sequences of witnesses:
$\langle u_i:i\in \alpha\rangle \text { and }\langle v_i:i\in \alpha\rangle$
such that for all $i\in \alpha$ we have:
$$u_i\in \alpha\smallsetminus
(\Delta a\cup \Delta c)\cup \cup_{j\leq i}(\Delta x_j\cup \Delta y_j\cup \dom\tau_j\cup \rng\tau_j\cup \dom\sigma_j\cup \rng\sigma_j)$$
$$\cup \{u_j:j<i\}\cup \{v_j:j<i\}$$
and
$$v_i\in \alpha\smallsetminus(\Delta a\cup \Delta c)\cup
\cup_{j\leq i}(\Delta x_j\cup \Delta y_j\cup \dom\tau_j\cup \rng\tau_j\cup \dom\sigma_j\cup \rng\sigma_j)$$
$$\cup \{u_j:j\leq i\}\cup \{v_j:j<i\}.$$
For an  algebra $\D$, we write ${\mathfrak{Bl}}\D$ to denote its Boolean reduct.
For a Boolean algebra $\C$  and $Y\subseteq \C$, we write
${\sf fl}^{\C}Y$ to denote the Boolean filter generated by $Y$ in $\C.$  Now let
\begin{align*}
Y_1&= \{a\}\cup \{-{\sf s}_{\tau_i}{\sf  c}_{k_i}x_i+{\sf s}_{\tau_i}{\sf s}_{u_i}^{k_i}x_i: i\in \alpha\},\\
Y_2&=\{-c\}\cup \{-{\sf s}_{\sigma_i}{\sf  c}_{l_i}y_i+{\sf s}_{\sigma_i}{\sf s}_{v_i}^{l_i}y_i:i\in \alpha\},\\
H_1&= {\sf fl}^{\mathfrak{Bl}\Sg^{\B}(X_1)}Y_1,\  H_2={\sf fl}^{\mathfrak{Bl}\Sg^{\B}(X_2)}Y_2,\\ 
\end{align*}
and
$$H={\sf fl}^{\mathfrak{Bl}\Sg^{\B}(X_1\cap X_2)}[(H_1\cap \Sg^{\B}(X_1\cap X_2)
\cup (H_2\cap \Sg^{\B}(X_1\cap X_2)].$$
Then we claim that $H$ is a proper filter of $\Sg^{\B}(X_1\cap X_2).$
To prove this claim it is sufficient to consider any pair of finite, strictly
increasing sequences of natural numbers
$$\eta(0)<\eta(1)\cdots <\eta(n-1)<\alpha\text { and } \xi(0)<\xi(1)<\cdots
<\xi(m-1)<\alpha,$$
and to prove that the following condition holds:

(+) For any $b_0$, $b_1\in \Sg^{\B}(X_1\cap X_2)$ such that
$$a\cdot \prod_{i<n}(-{\sf s}_{\tau_{\eta(i)}}{\sf  c}_{k_{\eta(i)}}x_{\eta(i)}+{\sf s}_{\tau_{\eta(i)}}{\sf s}_{u_{\eta(i)}}^{k_{\eta(i)}}x_{\eta(i)})\leq b_0$$
and
$$(-c)\cdot \prod_{i<m}
(-{\sf s}_{\sigma_{\xi(i)}}{\sf  c}_{l_{\xi(i)}}y_{\xi(i)}+{\sf s}_{\sigma_{\xi(i)}}{\sf s}_{v_{\xi(i)}}^{l_{\xi(i)}}y_{\xi(i)})\leq b_1$$
we have
$$b_0\cdot b_1\neq 0.$$
This can be proved by a tedious induction on $n+m$. We only give the base of the induction.
If $n+m=0$, then (+) simply
expresses the fact that no interpolant of $a$ and $c$ exists in
$\Sg^{\B}(X_1\cap X_2).$
In more detail: if $n+m=0$, then $a_0\leq b_0$
and $-c\leq b_1$. So if $b_0\cdot b_1=0$, we get $a\leq b_0\leq -b_1\leq c.$

Proving that $H$ is a proper filter of $\Sg^{\B}(X_1\cap X_2)$,
let $H^*$ be a (proper Boolean) ultrafilter of $\Sg^{\B}(X_1\cap X_2)$
containing $H.$
We thereby obtain  ultrafilters $F_1$ and $F_2$ of $\Sg^{\B}X_1$ and $\Sg^{\B}X_2$,
respectively, such that
$$H^*\subseteq F_1,\ \  H^*\subseteq F_2$$
and (*)
$$F_1\cap \Sg^{\B}(X_1\cap X_2)= H^*= F_2\cap \Sg^{\B}(X_1\cap X_2).$$
Now for all $x\in \Sg^{\B}(X_1\cap X_2)$ we have
$$x\in F_1\iff x\in F_2.$$
Also  $F_i$ for $i\in \{1,2\}$ satisfy the following
condition:

(**) For all $k<\alpha$, for all $\tau\in {\sf adm}$ for all $x\in \Sg^{\B}X_i$
if ${\sf s}_{\tau}{\sf  c}_kx\in F_i$ then ${\sf s}_{\tau}{\sf s}_l^kx$ is in $F_i$ for some $l\notin \Delta x.$

So by definition  for each $i\in \{1, 2\}$, $F_i$ is perfect.
For $i\in \{1, 2\}$, one defines a homomorphism $f_i$ 
on the subalgebra 
$\Sg^{\A}X_i$ of $\A$ using the hitherto constructed perfect ultrafilter $F_i$ exactly like the proof of item (3) above.
Like before, using (**) the map $f_i$ is a well--defined non--zero homomorphism. Indeed, $f_1(a)\neq 0$ and $f_2(-c)\neq 0$. 
By (*) we get that $f_1$ and $f_2$ agree on the common part $\Sg^{\A}(X_1\cap X_2)$.

Without loss of any generality, 
we can assume that $X_1\cup X_2$ (freely) generates $\A$. Freeness of $\A$  enables us to paste these homomorphisms, to a single one $h$ say, having 
domain $\Sg^{\A}{(X_1\cup X_2)}=\A$ and satisfying that $h\upharpoonright \Sg^{\A}X_1=f_1$ and $h\upharpoonright \Sg^{\A}X_2=f_2$. Since $h(a)=f_1(a)$ and $h(-c)=f_2(-c)$, then we get,
by $Id\in f_1(a)$ (since $a\in F_1)$ and $Id\in f_2(-c)$ (since $-c\in F_2$), that $Id\in f_1(a)\cap f_2(-c)=h(a)\cap h(-c)=h(a\cdot -c)$, since $h$ is homomorphism.
Thus $h(a\cdot -c)\neq 0$ which contradicts that $a\leq c$, and we are done.
The metalogical consequences follow 
by applying standard `bridge theorems'
in abstract algebraic logic passing from the algebra side to the logic side
\end{proof}
The next theorem follows by crossing the bridge from the other side.

\begin{corollary} \label{supap} For any strongly rich semigroup $\T$ on $\omega$,
$\CPEA_{\T}$ has the super amalgamation property.
\end{corollary}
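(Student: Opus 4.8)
The plan is to cross the bridge back from logic to algebra: Theorem~\ref{interpolation}(6) gives the order interpolation property of Definition~\ref{o}(3) in the free $\CPEA_\T$ on $\le\omega$ generators, equivalently the Craig interpolation theorem for the algebraizable logic $\L_\T$ whose equivalent algebraic semantics, in the Blok--Pigozzi sense \cite{bp}, is $\CPEA_\T$; I would then invoke the standard bridge theorem that converts this into the super amalgamation property of $\CPEA_\T$. The structural facts that make that bridge theorem applicable are all at hand: $\CPEA_\T$ is a variety of \emph{completely additive} Boolean algebras with operators, so it is congruence distributive (it has a Boolean reduct), its congruences are exactly the cylindrifier--closed Boolean ideals, and --- by complete additivity --- such an ideal, once generated in a superalgebra, stays cylindrifier--closed, so $\CPEA_\T$ has the congruence extension property.

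Two easy steps and one careful one then remain. First, interpolation in the free algebras on $\le\omega$ generators already yields interpolation in $\Fr_\kappa\CPEA_\T$ for every cardinal $\kappa$, since each element of a free algebra is a term in finitely many generators: an instance $a\le b$ with $a\in\Sg^{\Fr_\kappa}X_1$, $b\in\Sg^{\Fr_\kappa}X_2$ already sits inside $\Sg^{\Fr_\kappa}(Y_1\cup Y_2)$ for finite $Y_i\subseteq X_i$, and this subalgebra is itself a free $\CPEA_\T$ on finitely many generators, to which Theorem~\ref{interpolation}(6) applies directly; the interpolant supplied there lies in $\Sg(Y_1\cap Y_2)\subseteq\Sg^{\Fr_\kappa}(X_1\cap X_2)$. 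Second, I would run the usual amalgamation--from--interpolation construction: given $\A,\B,\C\in\CPEA_\T$ with embeddings $\C\hookrightarrow\A$, $\C\hookrightarrow\B$, form the amalgamated coproduct $\A\ast_\C\B$ (it exists since $\CPEA_\T$ is a variety) and present it as $\Fr/\theta$ with $\Fr$ free on $X_1\cup X_2$, where $X_1$ generates the image of $\A$, $X_2$ that of $\B$, and $X_1\cap X_2$ the common image of $\C$. If $a/\theta\le b/\theta$ with $a\in\Sg^{\Fr}X_1$ and $b\in\Sg^{\Fr}X_2$, then $a\cdot -b$ lies below a finite join of generators of the ideal of $\theta$, so only finitely many defining relations of $\A$ and of $\B$ are involved; absorbing those relations produces $a'\in\Sg^{\Fr}X_1$, $b'\in\Sg^{\Fr}X_2$ with $a'\le b'$ in $\Fr$, and an interpolant $c\in\Sg^{\Fr}(X_1\cap X_2)$ then maps to the element of the copy of $\C$ that witnesses the super clause; the reverse inequality reduces to this one by Boolean complementation, while taking $b=0$ recovers that $\A\to\A\ast_\C\B$ and $\B\to\A\ast_\C\B$ are injective, i.e.\ ordinary amalgamation.

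I expect the only genuinely delicate point to be this last step: one must check that $\theta$ is, over the Boolean reduct, generated precisely by the translates of the defining relations of $\A$ and of $\B$ (this is where complete additivity and ideal--determinedness of congruences enter), that only finitely many of those relations can bound a single element, and that the absorption step leaves the inequality genuinely between an $X_1$--element and an $X_2$--element, so that Definition~\ref{o}(3) applies and its interpolant descends correctly to $\C$. Everything past that is the cited bridge theorem, and once Theorem~\ref{interpolation}(6) is in place the corollary is essentially its algebraic restatement.
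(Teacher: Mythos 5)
Your proposal is correct and follows essentially the same route as the paper: the corollary is stated there with no argument beyond the remark that it ``follows by crossing the bridge from the other side,'' i.e., from the interpolation property of the free algebras (Theorem \ref{interpolation}(6)) via the standard Pigozzi-style bridge between interpolation in free algebras and super amalgamation for ideal-determined varieties of Boolean algebras with operators, which is exactly the bridge you spell out. Your two supplementary observations --- reducing interpolation in arbitrary free algebras to the finitely generated case via finite generation of terms, and running the amalgamated-coproduct construction with the cylindrifier-closed ideal generated by the defining relations --- are precisely the details the paper leaves implicit (its final section instead handles large free algebras by re-running the dilation at a larger regular cardinal, but your reduction is equally valid).
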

\section{Solution to the Finitizability problem for quantifier logics with equality}

We are tempted to
say that the non--commutative fragment of Keisler's logic with equality that we defined (algebraically) in the last section  is a reasonable solution to the 
{\it finitizability problem
in algebraic logic} for first order logic {\it with equality}.  
A satisfactory solution for first order logic {\it without equality} 
was provided by Sain \cite{Sain} with respect to Tarskian semantics. 

\subsection{Old solution with respect to Tarskian semantics}

For first order logic {\it with equality}
the following result, to the best of our knowledge,  is the best obtained so far. To formulate the result we need to recall some notation.
$\sf PEA_{\omega}$ stands for the class of polyadic equality algebras, $\sf QEA_{\omega}$ stands for the class of quasi--polyadic equality algebras where we have only finite substitutions,
$\sf RQEA_{\omega}$ stands for the class of representable $\sf QEA_{\omega}$s, all of dimension $\omega$, 
and finally $\Rd_{qea}$ denotes `quasi--polyadic equality reduct.'
\begin{theorem}\cite{SG, Sain} \label{Sain} There is a class $\sf K$ of $\omega$--dimensional set algebras with 
a finitely axiomatizable equational theory satisfying (1)--(3): 
\begin{enumarab}
\item  ${\sf K}=\bold S \Rd \sf PEA_{\omega},$ that is algebras in $\sf K$ are reducts of $\sf PEA_{\omega}$s,
\item ${\bf S}\Rd_{qea}{\sf K}=\sf RQEA_{\omega},$
\item $\bold H\sf K$ is a finitely based variety.
Here $\bold H$ is the operation of forming homomorphic
images.
\end{enumarab}
\end{theorem}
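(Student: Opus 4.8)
The plan is to reconstruct the argument of \cite{Sain,SG} around a \emph{finitely presented} rich semigroup, the point being that all the set--theoretic representability that is needed is already packaged in the theory of polyadic equality set algebras, so that the real content is finiteness of the equational axiomatization. Take $\T$ to be the finitely generated rich subsemigroup of $({}^{\omega}\omega,\circ)$ of Example \ref{semigroup}(1), with finite presenting set ${\sf S}=\{[i|j],[i,j],{\sf suc},{\sf pred}: i,j\in\omega\}$ (equivalently the $\mathbb Z$--indexed semigroup of Example \ref{semigroup}(2), transported along a bijection $\mathbb Z\to\omega$). Let $t$ be the \emph{finite} signature consisting of the Boolean operations, the single cylindrifier $\c_0$, the single diagonal ${\sf d}_{01}$, and the substitutions $\s_s$ for $s\in{\sf S}$, and put ${\sf K}=\bold S\,\Rd_t\,{\sf PEA}_{\omega}$, where ${\sf PEA}_{\omega}$ is read as the polyadic equality \emph{set} algebras of dimension $\omega$ (top element a full Cartesian space ${}^{\omega}U$). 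Since reducts and subalgebras of set algebras are set algebras, ${\sf K}$ is a class of $\omega$--dimensional set algebras, and ${\sf K}=\bold S\bold P\,{\sf K}$ because ${\sf PEA}_{\omega}$ is closed under $\bold P$ and $\Rd_t$ commutes with $\bold P$; hence $\bold H\,{\sf K}=\bold{HSP}\,{\sf K}$ is exactly the variety generated by ${\sf K}$.

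The construction rests on two observations. \emph{(a) Definability over $t$.} Because ${\sf S}$ generates a rich semigroup containing every replacement $[i|j]$ and every transposition $[i,j]$, hence every finite transformation of $\omega$ as a composite, each cylindrifier $\c_i$, each diagonal ${\sf d}_{ij}$, and each finite substitution $\s_{\sigma}$ is term--definable over $t$: for instance $\c_i x=\s_{[0,i]}\c_0\s_{[0,i]}x$ (valid by axiom (9) of Definition \ref{capa}, since $[0,i]$ is a permutation), ${\sf d}_{ij}=\s_{\tau}{\sf d}_{01}$ for a suitable composite of transpositions $\tau$ (valid by axiom (12)), and $\s_{\sigma}$ is a composite of the $\s_s$. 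These identities hold in every polyadic equality set algebra, so there is a well--defined ``quasi--polyadic equality term--reduct'' $\Rd_{qea}\A$ of any $t$--algebra $\A$, and if a ${\sf QEA}_{\omega}$ set algebra $\B$ is expanded to a polyadic equality set algebra $\B^{\sharp}$ on the same base, then $\Rd_{qea}\Rd_t\B^{\sharp}=\B$. \emph{(b) Finite equational base.} The abstract variety $\CPEA_{\T}$ is cut out by a finite Halmos schema; because $\T$ is \emph{finitely presented}, folding every operation back to $t$ via (a) turns that schema into a genuine \emph{finite} equational set $\Sigma$ over $t$, as only the finitely many generators, and finitely many instances of the schema indices, survive the folding.

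Granting (a) and (b), the three clauses follow. For clause (2), $\subseteq$: if $\A\subseteq\Rd_t\B$ with $\B\in{\sf PEA}_{\omega}$, then $\Rd_{qea}\A\subseteq\Rd_{qea}\Rd_t\B=\Rd_{qea}\B\in{\sf RQEA}_{\omega}$, and subalgebras of representable algebras are representable; $\supseteq$: given $\C\in{\sf RQEA}_{\omega}$, embed $\C$ into a product $\prod_k\B_k$ of ${\sf QEA}_{\omega}$ set algebras, expand each $\B_k$ to a polyadic equality set algebra $\B_k^{\sharp}$ on the same base, observe $\Rd_t\B_k^{\sharp}\in{\sf K}$, and use $\Rd_{qea}\Rd_t\B_k^{\sharp}=\B_k$ together with $\bold S\bold P$--closure of ${\sf K}$ to conclude $\C\in\bold S\,\Rd_{qea}\,{\sf K}$. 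Clauses (1) and (3) are the single assertion that $\bold H\,{\sf K}=\bold{Mod}(\Sigma)$, the variety axiomatized by the finite set $\Sigma$: soundness, $\bold H\,{\sf K}\subseteq\bold{Mod}(\Sigma)$, is a routine check that every $t$--reduct of a ${\sf PEA}_{\omega}$ satisfies the finitely many equations of $\Sigma$; for completeness, given $\A\models\Sigma$, the definability in (a) lets one \emph{unfold} $\A$ to an algebra $\A^{+}\in\CPEA_{\T}$ (which is exactly what $\Sigma$ is engineered to guarantee), after which the rich--semigroup neat embedding/dilation machinery of Lemma \ref{dl}, in the style of \cite{AU,Sain}, realizes $\A^{+}$---up to a homomorphic image of a subalgebra---inside the $t$--reduct of a representable polyadic equality set algebra; one must check along the way, as in the proof of Lemma \ref{dl}, that every homomorphism constructed respects the diagonals, which follows from ${\sf s}_{\tau}{\sf d}_{ij}={\sf d}_{\tau(i),\tau(j)}$.

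The step I expect to be the main obstacle is precisely this last, completeness half: establishing that the \emph{finite} $\Sigma$ axiomatizes the variety generated by ${\sf K}$, equivalently that every $\Sigma$--algebra is a homomorphic image of a subalgebra of a $t$--reduct of a polyadic equality set algebra. Here the finite presentation of $\T$ must be exploited together with a Henkin--style neat embedding, and---characteristically for the case \emph{with} equality---the closure under $\bold H$ is genuinely irreducible: one cannot in general land on the nose inside $\bold S\,\Rd_t\,{\sf PEA}_{\omega}={\sf K}$, because square (Tarskian) representability of polyadic equality algebras fails, and it is exactly the homomorphic--image closure that absorbs that failure, which is also why this theorem is only ``the best obtained so far'' rather than a full solution in the Tarskian setting.
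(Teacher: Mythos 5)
Your overall architecture (a finite signature $t$ built from a finitely presented rich semigroup, term--definability of the remaining operations, and the reduction of clauses (1)--(3) to soundness plus a weak completeness statement $\bold H{\sf K}={\sf Mod}(\Sigma)$) has the same shape as the sketch in the paper and in \cite{SG, Sain}, and, like the paper, you correctly isolate the completeness half as the part that cannot be done in a few lines. But there is a genuine error in your choice of $\Sigma$. You take $\Sigma$ to be the folded--down axioms of $\CPEA_{\T}$ (Definition \ref{capa}), whose commutativity axiom is only the weak one--way inequality of item (9). By Theorem \ref{interpolation}(1), the models of those axioms form (up to definitional equivalence) $\bold I\Gp_{\T}$, whose members live on \emph{non--disjoint} unions of cartesian spaces and in general violate ${\sf c}_i{\sf c}_jx={\sf c}_j{\sf c}_ix$. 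On the other hand, every algebra in your ${\sf K}=\bold S\,\Rd_t\,{\sf PEA}_{\omega}$ satisfies full commutativity of cylindrifiers, and commutativity is an equation in the signature $t$, hence inherited by $\bold H{\sf K}$. So $\bold H{\sf K}\subsetneq{\sf Mod}(\Sigma)$ for your $\Sigma$, and clause (3) fails as you have set it up. Theorem \ref{Sain} is the \emph{Tarskian} (disjoint--union) statement: the correct $\Sigma$ is the Sain--Gyuris schema of \cite{SG}, which contains full commutativity, and the reason $\bold H$ is unavoidable is not square representability of polyadic equality algebras in general, but the fact that those axioms cannot force the \emph{new} substitutions $\s_{\sf suc},\s_{\sf pred}$ to stay faithfully represented in a quotient; only the quasi--polyadic equality reduct survives. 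Your $\Sigma$ is the one appropriate to the paper's \emph{new} solution (Theorem \ref{final}), where no $\bold H$ is needed precisely because the semantics have been relativized; conflating the two collapses the distinction the paper is at pains to draw.

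Two smaller points. The set ${\sf S}=\{[i|j],[i,j],{\sf suc},{\sf pred}:i,j\in\omega\}$ that you call a finite presenting set is infinite; to get a genuinely finite signature one must use a finitely generated presentation such as the $\mathbb{Z}$--indexed semigroup of Example \ref{semigroup}(2) (five generators), as the paper does in Theorems \ref{fp} and \ref{final}. And for clause (2) the paper's route for the nontrivial inclusion is a neat embedding and dilation argument, iterating $\s_{\sf suc}$ as in Lemma \ref{dl} but now \emph{with} commutativity of cylindrifiers, followed by Henkin's neat embedding theorem for quasi--polyadic equality algebras, to show that $\Rd_{qea}\A\in{\sf RQEA}_{\omega}$ whenever $\A\models\Sigma$; your easy direction and your expansion of ${\sf QEA}_{\omega}$ set algebras to full polyadic equality set algebras on the same base are fine as far as they go, but they do not substitute for that step once $\Sigma$ is corrected.
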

\begin{demo}{Sketch} We exhibit such a ${\sf K}$ but prove  (1) and (2) only, and we show `axiomatizability by a finite schemata' instead of finite axiomatizability.
If the semigroup $\T$ we work with is finitely presented, then the finite 
schemata of equations can be translated to an equivalent finite set of equations in a finite signature 
as explained in the first item of the next theorem. 

Let $\T$ be any one of the two strongly rich semigroups in example \ref{rich}; for definiteness let it be the first.
Let $\sf Set$ be the class of set algebras of the form 
$(\mathfrak{B}({}^{\omega}U), \c_i, {\sf d}_{ij}, \s_{\tau})_{i,j\in \omega, \tau\in \T}$, $U$ a non--empty set, 
and let ${\sf K}={\bf SP}\sf Set$. Then $\A\in \sf K$ $\iff$ it has top element a {\it generalized} cartesian 
space, which is a {\it disjoint union} of cartesian spaces. It is proved in \cite{Sain} that $\sf K$ is not a variety; $\sf K$ is not closed under $\bold H$ nor ${\bf Up}$ (ultraproducts),
so $\sf K$ is not even a quasi--variety. 

Let $\Sigma$ be the finite schemata of equations given in \cite{SG} which 
is taken in the same signature of $\sf CPEA_T$. Then  it is easy to see that $\sf K\models \Sigma$; this is a soundness theorem.
The converse 
$\bold H{\sf K}={\sf Mod}\Sigma$, a {\it weak} completeness theorem, is harder to prove. We omit the proof referring the reader to \cite{SG}.
The intrusion of $\bold H$ here means that the set of axioms in $\Sigma$ stipulated in the expanded 
signature of $\sf CPEA_T$,  enforce that the (old) quasi--polyadic equality operations are representable, 
but the axioms are not strong enough to enforce representability of the newly added substitution operations 
$\s_{\sf suc}$ and $\s_{\sf pred}$. More precisely, if $\A\models \Sigma$, then $\A\in \bold H\sf K$, 
so there exist a set algebra $\B$ (whose top element is a generalized cartesian space) in $\sf K$ 
and a surjective homomorphism $f:\B\to \A$.  But  $f$ 
{\it might not be injective}.  In other words, these  substitution operations, though represented faithfully in $\B$, 
may not stay representable in the quotient algebra 
$\B/{\sf ker f}$.  On the other hand,  the remaining ${\sf QEA}_{\omega}$ operations  are represented faithfully 
(meet  as 
intersection and cylindrifiers as projections, \ldots etc) in both $\B$ and $\B/{\sf ker f}$ because ${\sf RQEA}_{\omega}$ 
is a variety (so it is closed under $\bold H$ which is not the case with $\sf K$). 
So what we can (and will) show is that if $\A\models \Sigma$, then 
$\Rd_{qea}\A\in \sf RQEA_{\omega}$. 
If in addition $\A$ is countable,
we show that $\Rd_{qea}\A$ has a $\mathfrak{p}$ omitting types theorem, and if $\Rd_{qea}\A$ 
is simple then, it has a $\sf covK$ omitting types theorem.

For the first part, one uses the neat embedding argument in lemma \ref{dl} by iterating
the unary operation ${\sf s}_{\sf succ}$ but assuming commutativity of cylindrifiers \cite{Sain, AU}.
Then the quasi-polyadic equality reduct would be representable by
Henkin's neat embedding theorem for $\sf QEA$s.
For the second part on omitting types, one uses the same argument in the proof of the third and fourth items in
theorem \ref{interpolation} (since $\T$ is strongly rich). But here the proof is simpler, the meager set formed in the Stone topology corresponding to 
omitting the given family of non--principal types, is the double join $\bigcup_{i\in \lambda}\bigcup_{\tau\in \T} \bold H_{i,\tau},$
where $\lambda$ is the number of non--principal types to be omitted.  Here the second union is taken on the whole of the 
countable semigroup $\T$ not restricted only to $\sf adm$. The unit hitherto obtained is a set of the form $\bigcup_{i\in I}{} ^{\alpha}U^{(p)}$
where  $U_i=U$, $U$ a countable set,
for every $i\in I$ and $p\in
{}^{\alpha}{U}$ \cite{AU}. Such a space is called a {\it compressed space}.
\end{demo}
The metalogical interpretation of the third condition $\sf Mod(\Sigma)={\bold H}\sf K$, 
means that the algebraizable logic corresponding to $\sf K$ is complete with respect to validities  but {\it is not compact.}
It is the case that  $\models \phi\iff \vdash \phi$ for any formula $\phi$, but it can happen that 
there exists a set of formulas $\Gamma\cup \{\phi\}$, 
such that $\Gamma\models \phi$, but $\Gamma\nvdash \phi$.

\subsection{New solution with respect to relativized semantics}

Throughout this subsection $\T$ will denote a countable 
rich subsemigroup of $({}^\omega\omega, \circ)$. 
For first order logic with equality, the solution we propose of course depends on the choice of the semigroup
$\T$. In our solution we have $\sf K=\sf Gp_{\T}$, so we do not need $\bold H$ as formulated in the third item of the theorem \ref{Sain}. 
Here $\sf K$ {\it itself is a variety}. This gives that the corresponding logic is both complete and 
compact. Furthermore, $\sf K$ will be {\it finitely axiomatizable}.
The price we pay for such substantial improvements is that we relativize semantics.
We require that the top elements of representable algebras are arbitrary unions 
of certain spaces rather than {\it disjoint} unions of such spaces which was the case in the last theorem due to Sain.

We need some definitions before we formulate 
a series of properties of $\Gp_\T$ (the class of set algebras) 
some of which are new and some already proved.
\begin{enumarab}

\item $\sf V$ is {\it atom--canonical}, if whenever $\A\in \sf V$ and $\A$ is atomic,
then the complex algebra of the atom structure of $\A$, in symbols $\Cm\At\A$, is in $\sf V$.

\item $\At(\sf V)$ denotes the class of atom structures of atomic algebras in $\sf V$ and
${\sf Str}({\sf V})=\{\alpha\in \At({\sf V}):  \Cm(\alpha)\in \sf V\}$.

\item The {\it  \d\ completion} of a Boolean algebra with operators $\A$ is the unique (up to isomorphisms that fix $\A$ pointwise)  complete algebra
$\B$ such that $\A\subseteq \B$, and $\A$ is {\it dense} in $\B$, meaning that for all non-zero $b\in B$,  there  exists non-zero
$a\in A$  such that $a\leq b$. If $\A$ is atomic and completely additive, then its \d\ completion is $\Cm\At\A$.
\end{enumarab}
Having the needed definitions at hand, in the next theorem, we collect some of our previously proved statements,
together with some more addressing the new notions introduced above.
Proofs will be provided for the newly added statements. 

\begin{theorem}\label{fp}  Assume that $\T$ is a finitely presented 
rich subsemigroup of $({}^{\omega}\omega, \circ)$ with the finite set $\sf S$ presenting $\sf T$. Assume
that $\T$ has distinguished elements $\pi$ and $\sigma$ such that $\sigma\in \sf S$. Such a semigroup exists \cite{Sain}.
Then there is a recursive finite set of equations $\Sigma$, such that ${\sf Mod}(\Sigma)=\Gp_{\T}$.
If $T$ is a strongly rich semigroup (not necessarily finitely presented), then the 
properties in items (2)--(6) in theorem \ref{interpolation} hold for
the variety $\Gp_{\T}$, and some more. In more detail:
\begin{enumarab}
\item Every countable atomic $\Gp_{\T}$ is completely representable. 
For the corresponding logic $\L_{\T}$,
every countable atomic theory has an atomic model. Furthermore, this atomic model omits any family of
non--principal types,
\item $ {\sf Gp_{\T}}$ is canonical and atom--canonical,
\item $\At(\Gp_{\T})={\sf Str}(\Gp_{\T})$, each is first order definable,
and they generate $\Gp_{\T}$ in the strong sense,  that is, $\Gp_{\T}=\bold S\Cm{\sf Str}(\Gp_{\T})=\bold S\Cm\At(\Gp_{\T})$,
\item $\Gp_{\T}$ is closed under \d\ completions,
\item $\L_{\T}$ has an omitting types theorem,
\item $\L_{\T}$ has the Craig interpolation property,
the Beth definability property, and enjoys a Robinson joint consistency theorem. Consequently, ${\sf Gp}_{\T}$
has the super amalgamation property.
\end{enumarab}
\end{theorem}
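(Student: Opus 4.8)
The plan is to prove the six items by combining the neat-embedding machinery of Lemma~\ref{dl} with the Stone-space/Baire-category arguments already developed in Theorems~\ref{main} and~\ref{interpolation}, adding only the genuinely new ingredients needed for the finitely-presented case and for the atom-structure statements. For item~(1), the finite-axiomatizability claim ${\sf Mod}(\Sigma)=\Gp_{\T}$, the key observation is that when $\T$ is finitely presented by $\sf S$ with $\sigma\in\sf S$, every Halmos-style schema axiom of $\CPEA_{\T}$ that quantifies over $\tau\in\T$ can be replaced by finitely many instances: it suffices to state the axiom for generators $\tau\in\sf S$, because the semigroup laws $\s_{\sigma\circ\tau}=\s_\sigma\circ\s_\tau$ (axiom (7) in Definition~\ref{capa}) then propagate it to all of $\T$; the finitely many defining relations of $\sf S$ contribute finitely many further equations. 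Combined with the representability direction $\CPEA_{\T}\subseteq\Gp_{\T}$ (Ferenczi's completeness, which applies since $\CPEA_{\T}$ is Sahlqvist hence canonical) and soundness $\Gp_{\T}\models\Sigma$, this yields ${\sf Mod}(\Sigma)=\CPEA_{\T}=\Gp_{\T}$ as a finitely axiomatizable variety. The metalogical consequence (finite Hilbert-style axiomatization of $\L_{\T}$) is then the standard Blok--Pigozzi bridge.

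For items~(2)--(4), canonicity is immediate from Sahlqvist axiomatizability. Atom-canonicity and closure under \d\ completions follow from the fact, used in the proof of Theorem~\ref{main}, that our algebras are completely additive and conjugated: for a completely additive BAO the \d\ completion equals $\Cm\At\A$, and a Sahlqvist variety is closed under canonical extensions; the point is to check that $\Cm\At\A$ actually lies in $\Gp_\T$, which I would do by running the complete-representation construction of Theorem~\ref{main} on the atomic algebra $\Cm\At\A$ itself (it is atomic with the same atom structure, hence completely representable, hence in $\Gp_\T$). For item~(3), the equality $\At(\Gp_\T)={\sf Str}(\Gp_\T)$ says precisely that every atom structure of an atomic algebra in $\Gp_\T$ has its complex algebra back in $\Gp_\T$, which is atom-canonicity again; first-order definability of the class of atom structures follows because the Sahlqvist axioms translate into first-order (indeed universal Horn-like) conditions on the atom structure, and $\Gp_\T=\bold S\Cm\At(\Gp_\T)$ because every atomic-in-the-limit algebra embeds into the complex algebra of its atom structure while general algebras are handled by passing to canonical (hence atomic) extensions and using $\bold S$.

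Items~(5) and~(6) are essentially restatements, on the logic side, of the omitting-types and interpolation parts (3), (6) of Theorem~\ref{interpolation} together with Corollary~\ref{supap}: the omitting types theorem for $\L_\T$ is the $\mathfrak p$-omitting-types theorem transcribed via the algebraizability of $\L_\T$; Craig interpolation and Beth definability come from the interpolation property of the free algebras (item (6) of Theorem~\ref{interpolation}) by the usual bridge, and the Robinson joint consistency theorem is equivalent, again by a standard bridge theorem, to super amalgamation of $\Gp_\T$, which is Corollary~\ref{supap}. The one place requiring care, and what I expect to be the main obstacle, is item~(2): verifying that $\Cm\At\A\in\Gp_\T$ rather than merely in some larger relativized class. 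This is where one must check that the complete-representation argument of Theorem~\ref{main}—which produces a base $M=\Gamma/\!\sim$ and a homomorphism into a $\Gp_\T$ with that base—goes through when the starting algebra is $\Cm\At\A$ and not an arbitrary atomic algebra; the subtle step is that $\Cm\At\A$ is complete and atomic so the suprema in equations~(\ref{tarek1})--(\ref{tarek2}) are genuine, but one needs the neat-embedding Lemma~\ref{dl} to apply, i.e. one needs $\Cm\At\A$ to be countable or to reduce to the countable case, which forces the downward \ls--Tarski detour (atom-canonicity of a variety is preserved under taking elementary subalgebras of the relevant atomic members, since atomicity and the atom structure are elementary). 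Once that reduction is in place the rest is bookkeeping already done in the earlier proofs.
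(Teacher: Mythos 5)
Your treatment of the finite axiomatizability preamble and of items (1), (3), (5) and (6), as well as canonicity in item (2), runs along essentially the same lines as the paper: truncate the Halmos schema using the finite presentation and the generating role of $\sigma$, invoke Sahlqvist canonicity, and transfer interpolation, omitting types and super amalgamation from theorem \ref{interpolation} and corollary \ref{supap} via the standard bridge theorems. The genuine gap is in your argument for atom--canonicity and closure under \d\ completions. You propose to verify $\Cm\At\A\in\Gp_{\T}$ by running the complete--representation construction of theorem \ref{main} on $\Cm\At\A$ itself. This fails for two reasons. First, that construction presupposes that the algebra it is applied to already satisfies the $\CPEA_{\T}$ axioms (they are needed to build the dilation, the admissible substitutions and the suprema in equations (1)--(2)); but whether $\Cm\At\A$ satisfies those axioms is precisely what atom--canonicity asserts, so the argument is circular. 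Second, even granting the axioms, complete representability of atomic algebras in $\CPEA_{\T}$ is available only for countable algebras --- item (2) of theorem \ref{interpolation} explicitly exhibits uncountable atomic algebras in $\CPEA_{\T}$ that are \emph{not} completely representable --- and $\Cm\At\A$ has cardinality $2^{|\At\A|}$, hence is uncountable whenever $\A$ is infinite. Your proposed downward \ls--Tarski detour does not repair this: a countable elementary subalgebra $\C$ of $\Cm\At\A$ is not complete and is not the complex algebra of its own atom structure, and establishing $\C\in\Gp_{\T}$ via theorem \ref{interpolation}(1) would again require knowing $\C\models\Sigma$, which by elementary equivalence is the same as the unproved assertion $\Cm\At\A\models\Sigma$.

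The paper's route avoids all of this. The axioms of $\CPEA_{\T}$ are Sahlqvist and the variety is conjugated, so by \cite[Theorem 2.96]{book} it is closed under \d\ completions; since the algebras are completely additive, the \d\ completion of an atomic $\A$ is exactly $\Cm\At\A$, whence $\Cm\At\A\in\CPEA_{\T}=\bold I\Gp_{\T}$ with no representation construction and no cardinality restriction. You in fact mention conjugation and complete additivity in passing; the fix is to use the persistence of Sahlqvist equations under \d\ completions of conjugated algebras as the actual proof, rather than as preliminaries to a representation argument that cannot get started.
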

\begin{proof}
We know from the first item of theorem \ref{interpolation} 
that if $\T$ is rich, then $\CPEA_\T=\bold I {\sf Gp}_\T$. If $\T$ is {\it rich and finitely presented}, then 
using exactly the techniques in \cite{Sain}
one can truncate the axiomatization of $\sf CPEA_\T$ given in
definition \ref{capa}, restricted to a
rich finitely presented semigroups, to be strictly finite and recursive.  This entails that the signature is also finite.
It turns out that  ${\sf Gp}_{\T}$ is term--definitionally equivalent to a variety
in a finite signature, namely, the Boolean operations together with 
$\{\c_0, {\sf d}_{01}, \s_{\tau}: \tau\in {\sf S}\}$, where ${\sf S}$ is a finite set presenting $\T.$ 
The idea here is that  the successor--like transformation 
$\sigma$  which, by hypothesis, is simultaneously one of the distinguished elements of $\T$ and is in the set $\sf S$,
generates the rest of the operations.

(1) follows by noting that atomic representations are complete ones.
The items that remain to be proved are
items  (2), (3), and (4).
Canonicity follows from the fact that equations axiomatizing
$\Gp_{\T}$ are  Sahlqvist \cite[Theorem 2.95]{book}.

First order axiomatizability of $\At(\Gp_{\T})$
follows from \cite[Theorem 2.84]{book}, and closure under \d\ completions follows from
\cite[Theorem 2.96]{book}, by noting that $\sf CPEA_{\T}$ is a conjugated variety.
By complete additivity, we have $\Cm\At\A$ is the \d\ completion of an atomic $\A$,
so we get that $\sf V=\Gp_{\T}$ is also
atom--canonical and closed under \d\ completions. This proves (4).
Hence by definition $\At(\sf V)={\sf Sr}(\sf V)$. By canonicity and atom--canonicity, conjuncted with \cite[Theorem 2.88]{book},
we get the last part in (3). Items (5) and (6) are proved in theorem \ref{interpolation} and we are done. 
\end{proof}

\section{Finite dimensional case}

There is a finite dimensional version of the finitizability problem in algebraic logic as well \cite{v, Nemeti, Fer, f, M, Simon2, Sain, Bulletin} which we
discuss in some depth, culminating in formulating the exact finite version of our main 
finitizability in theorem \ref{m} below.

\subsection{Local guarding and clique guarded semantics}

Here we study finite variable fragments of first order logic with different semantics,
which we call {\it local guarding}, allowing cylindrifiers to commute but {\it only locally}.  These semantics were studied by Hirsch and Hodkinson for relation algebras
\cite[Chapter 13]{book}. 
We start with defining certain semantical notions.
\begin{definition}
Assume that $1<m<n<\omega$. Let $M$ be a {\it relativized representation} of $\A\in \CA_m$, that is, there exists an injective
homomorphism $f:\A\to \wp(V)$, where $V\subseteq {}^mM$ and $\bigcup_{s\in V} \rng(s)=M$. Here we identify the set algebra with universe $\wp(V)$ with
its universe $\wp(V)$, since the concrete operations, like Boolean intersection or projections (cylindrifiers)
uniquely depend on the top element $V$. For $s\in V$ and $a\in \A$,
we write $M\models a(s)$ for $s\in f(a)$. Let  $\L(\A)^n$ be the first order signature using $n$ variables
and one $m$--ary relation symbol for each element in $A$. Then {\it an $m$--clique} is a set $C\subseteq M$ such
$(a_1,\ldots, a_{m-1})\in V=1^M$
for distinct $a_1, \ldots, a_{m}\in C.$
Let
${\sf C}^n(M)=\{s\in {}^nM :\rng(s) \text { is an $m$--clique}\}.$
Then ${\sf C}^n(M)$ is called the {\it $m$--Gaifman hypergraph of $M$}, with the $m$--hyperedge relation $1^M$.
\begin{enumarab}
\item The {\it clique guarded semantics $\models_c$} are defined inductively. For atomic formulas and Boolean connectives they are defined
like the classical case and for existential quantifiers
(cylindrifiers) they are defined as follows:
For $i<n$ and $\bar{s}\in {}^nM$, $M, \bar{s}\models_c \exists x_i\phi$ $\iff$ there is a $\bar{t}\in {\sf C}^n(M)$, $\bar{t}\equiv_i \bar{s}$ such that
$M, \bar{t}\models \phi$.
\item  We say that $M$  is  {\it $n$--square},
if witnesses for cylindrifiers can be found on $m$ cliques. More precisely,
whenever $\bar{s}\in {\sf C}^n(M), a\in \A$, $i<m$,
and   $l:m\to n$ is an injective map, if $M\models {\sf c}_ia(s_{l(0)},\ldots, s_{l(m-1)})$,
then there is a $\bar{t}\in {\sf C}^n(M)$ with $\bar{t}\equiv _i \bar{s}$,
and $M\models a(t_{l(0)}, \ldots, t_{l(m-1)})$.

\item  $M$ is said to be {\it $n$--flat} if  it is $n$--square and
for all $\phi\in \L(\A)^n$, for all $\bar{s}\in {\sf C}^n(M)$, for all distinct $i,j<n$,
$M\models_c [\exists x_i\exists x_j\phi\longleftrightarrow \exists x_j\exists x_i\phi](\bar{s}).$
\end{enumarab}
\end{definition}
This semantics is also a {\it relativization} to ${\sf C}^n(M)$, it is a {\it local relativization}.
By convention by an $\omega$--flat or $\omega$--square representation of a $\CA_n$ having countably many atoms, 
we mean an ordinary representation.
For terminolgy on neat reducts for $\CA$s, we follow \cite{HMT1}. Fix ordinals $m<n$. If $\B\in \CA_n$, then $\Nr_m\B(\in \CA_m)$ 
is the neat {\it $m$--reduct of $\B$.} 
If $\A\in \CA_m$ and $\A\subseteq \Nr_m\B$, with $\B\in \CA_n$, we say that $\B$ is an {\it $n$--dilation of $\A$}, or simply a {\it dilation} of $\A$ if $n$ is clear 
from context.  For $\sf K\subseteq {\sf CA}_n$, ${\sf Nr}_m{\sf K}=\{\Nr_m\B: \B\in {\sf K}\}\subseteq \CA_m$.

The semantical notion of having an $n$--flat representation is equivalent to the syntactical one of having an $n$--dilation, as expressed in 
the next (completeness) theorem
with respect to clique guarded semantics:
\begin{lemma}\label{neatsq}
Let $2<m<n<\omega$. Then an algebra $\A\in \CA_m$ has an $n$--flat representation $\iff \A\in \bold S{\sf Nr}_m\CA_n.$
\end{lemma}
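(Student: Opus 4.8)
The plan is to prove the biconditional \texttt{Lemma \ref{neatsq}} by establishing the two directions separately, using a step--by--step (``back--and--forth'') construction for the harder implication. For the direction $\A\in\bold S{\sf Nr}_m\CA_n\Longrightarrow\A$ has an $n$--flat representation, assume first that $\A=\Nr_m\B$ for some $\B\in\CA_n$ (the general case $\A\subseteq\Nr_m\B$ follows immediately since a relativized representation of $\B$ restricts to one of any subalgebra of its $m$--neat reduct). Without loss of generality $\B$ may be taken to be locally finite, or one passes to the dilation only as needed; then one builds a relativized representation $M$ whose points are labelled $n$--tuples, so that the clique guarded semantics $\models_c$ on $M$ validate exactly the operations of $\A$. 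The key technical device is to run a chase/saturation argument: starting from a single tuple, one repeatedly adds witnesses for cylindrifiers, at each stage only along $m$--cliques, using the axioms of $\CA_n$ (in particular commutativity of cylindrifiers with indices $<n$) to guarantee that the defects can always be repaired. Because we only ever demand witnesses on $m$--cliques and we have $n>m$ dimensions to play with in $\B$, each finite configuration extends; taking a (countable) union over all stages yields $M$ that is $n$--square, and the commutativity available in $\B$ between any two of its $n$ dimensions yields $n$--flatness of $M$.

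For the converse, suppose $\A\in\CA_m$ has an $n$--flat representation $M$, with $f:\A\to\wp(V)$, $V\subseteq{}^mM$, and the $m$--Gaifman hypergraph ${\sf C}^n(M)$. The plan is to manufacture the dilation $\B$ directly from $M$: let $\B$ be (an abstract copy of) the algebra of ``$n$--dimensional'' sets obtained by interpreting, for each $\phi\in\L(\A)^n$, the set $\phi^M=\{\bar s\in{\sf C}^n(M):M,\bar s\models_c\phi\}$, with Boolean operations as set operations, cylindrifiers ${\sf c}_i$ ($i<n$) as the relativized projections along ${\sf C}^n(M)$, and diagonal elements ${\sf d}_{ij}$ as $\{\bar s\in{\sf C}^n(M):s_i=s_j\}$. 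One checks that $\B\in\CA_n$: most axioms are inherited verbatim from the set--theoretic definitions as in ordinary cylindric set algebras, and the one genuinely nontrivial axiom --- commutativity ${\sf c}_i{\sf c}_j={\sf c}_j{\sf c}_i$ --- is exactly what $n$--flatness of $M$ buys us (item (3) of the definition). Finally one verifies that the map sending $a\in\A$ to the $m$--ary relation $a^M$ (viewed inside $\B$ via the obvious identification of $m$--tuples with the first $m$ coordinates of cliques) embeds $\A$ into $\Nr_m\B$: the neat reduct condition ${\sf c}_i x=x$ for $m\le i<n$ holds because the definition of $a^M$ does not mention coordinates $\ge m$, and $m$--squareness ensures cylindrifiers ${\sf c}_i$ for $i<m$ computed in $\B$ agree with those computed in $\A$ as represented by $f$.

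The main obstacle, and where the bulk of the work lies, is the forward direction: constructing an $n$--flat $M$ from a dilation $\B\in\CA_n$ while keeping all witnesses confined to $m$--cliques. The delicate point is that in a relativized (clique guarded) world, adding a witness for one cylindrifier may create new $m$--cliques whose other coordinates now demand fresh witnesses, so the construction must be organised as a fair, transfinite (here countable) enumeration of all pending ``cylindrifier defects'' $(\bar s,i,a)$ so that every one is eventually attended to, and one must show the amalgamation/patching step always succeeds --- this is precisely where the axioms of $\B$ as an $n$--dimensional cylindric algebra (not merely $m$--dimensional) are essential, since the extra $n-m$ coordinates give the room to realise the required $(m{+}1)$--ary patterns consistently. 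I would handle this with the standard networks/hypergraph machinery of Hirsch--Hodkinson (\cite[Chapter 13]{book}), adapting their relation--algebra treatment of $n$--flat representations to the cylindric setting; the flatness clause then follows by building into the enumeration, for every $\bar s\in{\sf C}^n(M)$ and every pair $i,j<n$, a demand that the two composite witnesses $\exists x_i\exists x_j$ and $\exists x_j\exists x_i$ be realised by the same extension, which is consistent because the corresponding commutativity equation holds in $\B$.
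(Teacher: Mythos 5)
Your proposal is correct and follows essentially the same route as the paper: for the direction from an $n$--flat representation you build the formula algebra on the $m$--Gaifman hypergraph ${\sf C}^n(M)$ with clique--guarded semantics, use flatness for commutativity, and neatly embed $\A$ via $a\mapsto a(\bar x)^M$; for the harder converse you invoke the Hirsch--Hodkinson step--by--step/hyperbasis machinery of \cite[Chapter 13]{book}, exactly as the paper's outline does. The only detail you gloss over is that the paper passes to an $n$--dilation of the \emph{canonical extension} of $\A$ before extracting the hyperbasis (so that tuples can be coherently labelled), but since you defer that step to the same cited machinery, this is a presentational rather than a substantive difference.
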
 
\begin{proof}
\cite[Theorem 13.46]{book}. Let $M$ be an $n$--flat representation of $\A$. We show that $\A\subseteq \Nr_m\D$, for some $\D\in \CA_n$.
For $\phi\in \L(\A)^n$,
let $\phi^{M}=\{\bar{s}\in {\sf C}^n(M):M,\bar{s}\models_c \phi\}$, where ${\sf C}^n(M)$ is the $m$--Gaifman hypergraph.
Let $\D$ be the algebra with universe $\{\phi^{M}: \phi\in \L(A)^n\}$ and with  cylindric
operations induced by the $m$-clique--guarded (flat) semantics read off the connectives. Then in $\D$, by $n$--flatness, 
cylindrifiers commute so $\D\in \CA_n$.  We identify $a\in \A$ with the $m$--ary relational formula $a(\bar{x})$ it defines in $\L(\A)^n$.
Define $\theta:\A\to \D$, via $a\mapsto a(\bar{x})^{M}$. Then exactly like in the proof of \cite[Theorem 13.20]{book},
$\theta$ is a neat embedding, that is, $\theta(\A)\subseteq \Nr_m\D$. 

The other direction is harder. We give an outline.  From an $n$--dilation $\D$ of the {\it canonical extension}
of $\A\in \CA_m$, one constructs an $n$--dimensional 
hyperbasis \cite[Definition 12.11]{book} modified to the $\CA$ case.
This $n$--dimensional hyperbasis can be viewed as a saturated set of $n$--dimensional hypernetworks (mosaics) 
that can be glued together in a step--by--step
manner to build the required representation of $\A$. For the relation algebra case witness
\cite[Lemmata 13.33-34-35, Proposition 36]{book}. 
\end{proof}

\begin{theorem}\label{2.12} For any $2<m<n<\omega$, the variety of algebras having $n+1$--flat representations
is not finitely axiomatizable over the variety of algebras having $n$--flat representations, and
$\sf RCA_m$ is not finitely axiomatizable over the variety of algebras in $\CA_m$ having
$n$--flat representations.
\end{theorem}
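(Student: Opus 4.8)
The plan is to reduce Theorem \ref{2.12} to known non-finite axiomatizability results for neat reducts of cylindric algebras, using Lemma \ref{neatsq} as a bridge between the semantic notion of $n$-flat representability and the algebraic notion $\bold S\Nr_m\CA_n$. By Lemma \ref{neatsq}, the variety of $\CA_m$s having $n$-flat representations is exactly $\bold S\Nr_m\CA_n$, and $\sf RCA_m = \bold S\Nr_m\CA_\omega = \bigcap_{k\geq m}\bold S\Nr_m\CA_k$. So the first statement amounts to: $\bold S\Nr_m\CA_{n+1}$ is not finitely axiomatizable over $\bold S\Nr_m\CA_n$ (note the shift $n+1$-flat corresponds to $(n+1)$-dilations; one should check the indexing carefully, since an $n$-flat representation gives an $n$-dilation, and commutativity of all cylindrifiers — full flatness at level $n$ — is what is being compared). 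The second statement amounts to: $\sf RCA_m$ is not finitely axiomatizable over $\bold S\Nr_m\CA_n$ for any fixed $n>m$.

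First I would recall that each $\bold S\Nr_m\CA_k$ is a canonical variety (this is standard, e.g. from \cite{HMT2} or Hirsch--Hodkinson), forming a strictly decreasing chain $\CA_m \supsetneq \bold S\Nr_m\CA_{m+1}\supsetneq \bold S\Nr_m\CA_{m+2}\supsetneq\cdots$ with intersection $\sf RCA_m$. The key input is the theorem of Hirsch--Hodkinson--Maddux (for relation algebras) and its cylindric analogue: for each $n$ there is an algebra $\A_n \in \bold S\Nr_m\CA_n \setminus \bold S\Nr_m\CA_{n+1}$, and moreover these can be arranged so that a non-principal ultraproduct $\prod_{n/U}\A_n$ lies in $\sf RCA_m \subseteq \bold S\Nr_m\CA_{n+1}$. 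The standard way to produce such algebras is via relativized (hyper)basis constructions / Monk-like algebras built from graphs with large chromatic number, exactly the machinery alluded to in the proof sketch of Lemma \ref{neatsq} (the $n$-dimensional hyperbasis of \cite[Definition 12.11]{book}). Given such a sequence, if $\bold S\Nr_m\CA_{n+1}$ were finitely axiomatizable over $\bold S\Nr_m\CA_n$ by a finite set $\Sigma$ of equations, then since $\A_k \in \bold S\Nr_m\CA_n$ for all $k\geq n$ but $\A_k \notin \bold S\Nr_m\CA_{k+1} \supseteq$ (for $k\geq n$) the smaller varieties, in particular $\A_k \notin \bold S\Nr_m\CA_{n+1}$ whenever $k \geq n$, each such $\A_k$ falsifies some equation in $\Sigma$; as $\Sigma$ is finite, some single equation $\epsilon \in \Sigma$ fails in infinitely many $\A_k$, so it fails in a non-principal ultraproduct of them — contradicting that this ultraproduct is in $\sf RCA_m \subseteq \bold S\Nr_m\CA_{n+1} \models \Sigma$. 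The same ultraproduct argument handles the second claim: $\A_k \in \bold S\Nr_m\CA_n$ for $k\geq n$ (actually for $k \geq $ the relevant index), all $\A_k \notin \sf RCA_m$, yet $\prod_{k/U}\A_k \in \sf RCA_m$, so $\sf RCA_m$ cannot be carved out of $\bold S\Nr_m\CA_n$ by finitely many equations.

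Concretely, the steps in order are: (i) translate both statements via Lemma \ref{neatsq} into statements about the chain $\bold S\Nr_m\CA_k$; (ii) invoke (or re-derive from the hyperbasis construction in the proof of Lemma \ref{neatsq}) the existence of algebras $\A_k \in \bold S\Nr_m\CA_k \setminus \bold S\Nr_m\CA_{k+1}$ for $m<k$, witnessing strictness of every inclusion in the chain, with the extra feature that a suitable non-principal ultraproduct of any infinite subfamily lands in $\sf RCA_m$; (iii) run the finite-axiomatizability-versus-ultraproduct argument twice, once for the pair $(\bold S\Nr_m\CA_n, \bold S\Nr_m\CA_{n+1})$ and once for the pair $(\bold S\Nr_m\CA_n, \sf RCA_m)$; (iv) note that $n$-flat (as opposed to $n$-square) representability corresponds to the variety being genuinely $\bold S\Nr_m\CA_n$ and not something strictly between, so the comparison is between varieties in the chain and not between $\bold S\Nr_m$ of $\CA$s versus $\bold S\Nr_m$ of some non-commutative relativized class — this bookkeeping about flat vs. square is where one must be careful.

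The main obstacle is step (ii): producing the witness algebras $\A_k$ with the precise neat-reduct membership and the ultraproduct-into-$\sf RCA_m$ property. This is the genuinely hard, technical heart — it is essentially the cylindric-algebra adaptation of the Hirsch--Hodkinson relation-algebra results, using graphs of increasing chromatic number to build atom structures whose complex algebras have $k$-flat but not $(k+1)$-flat representations, then verifying via a Ramsey / Erd\H{o}s-type argument on chromatic numbers that the ultraproduct graph has infinite chromatic number and hence yields a fully representable algebra. I would cite \cite{book} (the relevant lemmata in Chapters 12--13 and the relation-algebra non-finite-axiomatizability theorems there) and indicate the $\CA_m$-transfer, rather than reproving it; everything else (the ultraproduct/compactness packaging in steps (iii)--(iv)) is routine once the witnesses are in hand.
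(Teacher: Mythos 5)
Your overall strategy---translate both statements via lemma \ref{neatsq} into statements about the chain $\bold S{\sf Nr}_m\CA_k$, produce witness algebras, and finish with an ultraproduct/Lo\'s argument---is exactly the route the paper takes. But there is a genuine gap in how you index the witnessing family, and it breaks your argument for the first claim. You take \emph{one} algebra $\A_k\in \bold S{\sf Nr}_m\CA_k\sim \bold S{\sf Nr}_m\CA_{k+1}$ per level $k$ and propose to take the ultraproduct over the index $k$. The chain is decreasing in $k$, so for $k\geq n+1$ you have $\A_k\in \bold S{\sf Nr}_m\CA_k\subseteq \bold S{\sf Nr}_m\CA_{n+1}$. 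Your inference ``$\A_k \notin \bold S{\sf Nr}_m\CA_{k+1}$, hence in particular $\A_k\notin \bold S{\sf Nr}_m\CA_{n+1}$ whenever $k\geq n$'' runs the inclusion the wrong way: for $k\geq n$ the class $\bold S{\sf Nr}_m\CA_{k+1}$ is \emph{contained in} $\bold S{\sf Nr}_m\CA_{n+1}$, so failing to lie in the smaller class says nothing about the larger one; in fact $\A_k$ \emph{does} lie in $\bold S{\sf Nr}_m\CA_{n+1}$ for all $k>n$. Only the single algebra $\A_n$ witnesses $\bold S{\sf Nr}_m\CA_n\sim\bold S{\sf Nr}_m\CA_{n+1}\neq\emptyset$, and one algebra cannot drive a non-finite-axiomatizability argument.

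What is needed---and what the paper uses---is a family with a \emph{second} parameter, the dilation dimension being held fixed: the algebras $\C(m,n,r)=\Ca(H_m^{n+1}(\A(n,r),\omega))$ of \cite[Definition 15.4]{book}. For each fixed $k\geq 1$ and \emph{every} $r<\omega$ one has $\C(m,m+k,r)\in {\sf Nr}_m\CA_{m+k}\sim\bold S{\sf Nr}_m\CA_{m+k+1}$, while $\Pi_{r/U}\C(m,m+k,r)\in{\sf RCA}_m$ for any non-principal ultrafilter $U$ on $\omega$ \cite[Corollary 15.10]{book}. Thus, with $n=m+k$ fixed, there are infinitely many algebras inside $\bold S{\sf Nr}_m\CA_n$ but outside $\bold S{\sf Nr}_m\CA_{n+1}$ (hence also outside ${\sf RCA}_m$), whose ultraproduct lands in ${\sf RCA}_m\subseteq\bold S{\sf Nr}_m\CA_{n+1}$; your step (iii) then goes through verbatim and this single $r$-indexed family yields \emph{both} claims of the theorem. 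Your diagonal family could in principle still salvage the second claim, but the ultraproduct-into-${\sf RCA}_m$ property you assume for it is not what the cited constructions directly provide, whereas it is exactly what is proved for the $r$-indexed family.
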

\begin{proof}
Assume that $3\leq m\leq n<\omega$. Using the notation in \cite{book}, let
$\mathfrak{C}(m,n,r)=\Ca(H_m^{n+1}(\A(n,r),  \omega))$
be as defined in \cite[Definition 15.4]{book}.
It can be proved that for any finite $k\geq 1$, $\C(m, m+k, r)\in {\sf Nr}_m\CA_{m+k}\sim \bold S{\sf Nr}_m\CA_{m+k+1}$
and $\Pi_{r/U}\C(m, m+k, r)\in \sf RCA_m$ where $U$ is any non--principal ultrafilter on $\omega$,
witness \cite[Corollary 15.10, Exercise 2, pp. 484]{book}. Using Lo\'s theorem \cite[proof of Theorem 15.1(4)]{book} and the fact that the
variety of algebras having $m<n\leq \omega$ flat representations coincides with the variety $\bold S{\sf Nr}_m\CA_n$ as proved in the previous lemma, 
we get the required result.
\end{proof}
The result that for $2<n<\omega$ and positive $k$, the variety $\bold S{\sf Nr}_n\CA_{n+k+1}$ is not finitely axiomatizable over the variety $\bold S{\sf Nr}_n\CA_{n+k}$ 
is lifted to the transfinite replacing `non--finite axiomatizability' by `not axiomatizable by a finite schemata' 
in \cite{t}. For more negative results on decidability and finite axiomatizability, we have: 
\begin{theorem}\label{decidable}
Let $n\geq 5$. Then it is undecidable to tell
whether a finite algebra in  $\CA_3$ has an $n$--flat representation, and the variety
$\bold S{\sf Nr}_3\CA_n$ cannot be finitely axiomatizable
in $k$th order logic for any positive $k$.
\end{theorem}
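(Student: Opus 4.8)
The bound $n\ge 5$ is inherited from the analogous undecidability phenomenon for relation algebras, and the plan is to reduce from it. Recall from \cite{book} that for $n\ge 5$ it is undecidable whether a finite relation algebra $\B$ has an $n$--flat representation (equivalently, an $n$--dimensional hyperbasis, equivalently $\B\in\bold S\mathfrak{Ra}\CA_n$). The first step is a standard (Maddux--style) computable construction $\B\mapsto\A(\B)$ associating to a finite relation algebra a finite $\A(\B)\in\CA_3$ that codes $\B$ in dimension three: roughly, $\A(\B)$ is the $\CA_3$ whose atoms are the consistent triples of atoms of $\B$, with ${\sf c}_0,{\sf c}_1,{\sf c}_2$ the coordinate projections on triples and ${\sf d}_{ij}$ the obvious sets of triples. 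The reduction rests on the equivalence
$$\B \text{ has an } n\text{--flat representation} \iff \A(\B) \in \bold S\Nr_3\CA_n,$$
whose right--hand side, by Lemma \ref{neatsq}, is exactly the assertion that $\A(\B)$ has an $n$--flat representation. Granting it, the first clause of the theorem is immediate: a decision procedure for whether a finite $\A\in\CA_3$ has an $n$--flat representation would, composed with $\B\mapsto\A(\B)$, decide the undecidable relation--algebra problem.

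The equivalence is established with the neat--reduct/hyperbasis apparatus of Lemma \ref{neatsq}. For the implication $(\Leftarrow)$: an $n$--flat representation of $\A(\B)$ yields one of $\B$ by reading the binary relations off the base; equivalently, from $\D\in\CA_n$ with $\A(\B)\subseteq\Nr_3\D$ one extracts a member of $\CA_n$ whose $\mathfrak{Ra}$--reduct contains $\B$. For $(\Rightarrow)$: from an $n$--flat representation of $\B$ one reads off an $n$--dimensional hyperbasis of $\B$, amalgamates its hypernetworks step by step into a representation whose $3$--Gaifman hypergraph supports an $n$--flat representation of $\A(\B)$, and then reads off a member of $\CA_n$ neatly containing $\A(\B)$ exactly as in the proof of Lemma \ref{neatsq}. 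The direction $(\Rightarrow)$ is where the bound $n\ge 5$ is genuinely used, so that it matches the relation--algebra side verbatim.

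For the second clause, suppose toward a contradiction that for some positive $k$ the variety $\bold S\Nr_3\CA_n$ were, over all algebras of the signature of $\CA_3$, the model class of a \emph{finite} set $\Sigma$ of $k$th order sentences. For any \emph{finite} $\A\in\CA_3$, deciding whether $\A\models\Sigma$ is a terminating computation, since each quantifier of $\Sigma$, of whatever order, ranges over a finite set built over the finite universe of $\A$. Then membership of finite $\CA_3$'s in $\bold S\Nr_3\CA_n$ would be decidable, contradicting, via Lemma \ref{neatsq}, the first clause. The same argument excludes a finite $k$th order axiomatization even of the class of finite members of $\bold S\Nr_3\CA_n$.

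The main obstacle is the $(\Rightarrow)$ half of the key equivalence: one must check that the cylindric--basis construction $\A(\B)$ meshes with the Hirsch--Hodkinson hyperbasis--to--representation argument, i.e.\ that an $n$--dimensional hyperbasis of $\B$ really does produce an $n$--flat representation of the \emph{full} $\CA_3$ $\A(\B)$, so that the diagonals and the (locally non--commuting) cylindrifications of $\A(\B)$ are respected throughout the gluing of hypernetworks, and that the numerical threshold $n\ge 5$ transfers without change. Everything else is routine dictionary work between the relation--algebra and dimension--three cylindric settings, together with the elementary fact that higher--order model checking is decidable on finite structures.
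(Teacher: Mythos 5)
Your proof follows the same route as the paper's: both clauses are obtained by reduction from the undecidability, for $n\geq 5$, of $n$--flat representability of finite relation algebras \cite[Theorem 18.13, Corollaries 18.14--18.16]{book}, via a recursive construction turning a finite relation algebra $\B$ into a finite $\CA_3$ whose $n$--flat representability is equivalent to that of $\B$; and your second clause (a finite $k$th order axiomatization would make membership decidable on finite algebras) is exactly the paper's argument and is correct. The only divergence is the choice of construction. The paper routes the reduction through Hodkinson's construction \cite{Hodkinson, AUU}, which assigns to every atomic relation algebra ${\sf R}$ an atomic $\A\in\CA_3$ with ${\sf R}\subseteq {\sf Ra}\A$ and for which the two--way transfer of hyperbases/$n$--flatness is already on record in \cite{book}; you substitute the Maddux basic--matrices algebra $\A(\B)$, the complex algebra $\Ca$ of the set of consistent triples of atoms of $\B$. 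That substitution is workable --- for a genuine relation algebra the Peircean triangle law makes the set of all basic matrices a $3$--dimensional cylindric basis, so $\A(\B)\in\CA_3$ and $\B\subseteq{\sf Ra}\A(\B)$ --- but the $(\Rightarrow)$ half of your key equivalence, namely that an $n$--dimensional hyperbasis of $\B$ really induces an $n$--dilation of \emph{all} of $\A(\B)$ (diagonals and cylindrifiers included), is precisely the content of the cited corollaries and is the one step your write--up names as ``the main obstacle'' without discharging it. Either carry out that verification for the basic--matrices algebra, or do as the paper does and quote the version of the transfer already proved for Hodkinson's construction; with that reference supplied the argument is complete.
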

\begin{proof} This can be proved by lifting the analogous results for relation algebras
\cite[Theorem 18.13, Corollaries 18.14, 18.15, 18.16]{book}. One uses the construction of Hodkinson in \cite{Hodkinson}
which associates recursively to every atomic relation algebra $\sf R$, an atomic  $\A\in \CA_3$
such that ${\sf R}\subseteq \sf Ra\A$, 
the latter is the relation algebra reduct of $\A$, cf. \cite[Definition 5.3.7, Theorem 5.3.8]{HMT2}.
The idea for the second part is that the existence of any such finite axiomatization in 
$k$th order logic for any positive $k$,  
gives a decision procedure for telling whether a finite algebra is 
in $\bold S{\sf Nr}_3\CA_n$
or not \cite{book}, which is impossible by the first part.
\end{proof}
\subsection{Omitting types in clique guarded semantics}

We will show that the omitting types theorem fails for  locally guarded fragments, in the sense stated in our next result, theorem \ref{OTT}.
A different proof is given in theorem \ref{OTT2}. The proofs will be used below to generalize classical results proved by Hirsch and Hodkinson \cite{Hodkinson, HH}.
We need some preparation. Throughout this subsection, unless otherwise indicated, $m$ is a finite ordinal $>2$.

\begin{definition}
Let $T$ be a first order theory in a signature using $m<\omega$ many variables and $Fm$ be the set of formulas in this signature.
Assume that $2<m<n\leq \omega$. The non--empty set $M$ is {\it an $n$--flat model of $T$} if $M$ is an
$n$--flat representation of $\Fm_T$, where the last is the {\it Tarski--Lindenbaum} $\CA_m$
of formulas of  dimension $m$ corresponding
to $T$ formed the usual way. Let $\Gamma\subseteq Fm$ be a set of formulas, referred to as {\it a type}.
Then  $\Gamma$ is {\it omitted in $M$}, if there exists an
isomorphism $f:\Fm_T\to \wp(V)$ where $\bigcup_{s\in V}\rng(s)=M$ such that
$\bigcap_{\phi\in \Gamma}f(\phi_T)=\emptyset$,
otherwise $\Gamma$ is {\it realized in $M$}.
\end{definition}
We need the notions of atomic networks and atomic games \cite{book, HHbook2}:

\begin{definition}\label{games}  
\begin{enumarab}
\item An {\it atomic network} on an atomic algebra $\A\in \CA_m$  is a map $N: {}^m\Delta\to  \At\A$, where
$\Delta$ is a non--empty set of {\it nodes}, denoted by $\nodes(N)$, satisfying the following consistency conditions: 
\begin{itemize}
\item If $\bar{x}\in {}^m\nodes(N)$, and $i<j<m$, then $N(\bar{x})\leq {\sf d}_{ij}\iff x_i=x_j$.
\item If $\bar{x}, \bar{y}\in {}^m\nodes(N)$, $i<m$ and $\bar{x}\equiv_i \bar{y}$, then  $N(\bar{x})\leq {\sf c}_iN(\bar{y})$.
\end{itemize}
\item   Assume that $\A\in \CA_m$ is  atomic and that $n, k\leq \omega$. 
The {\it atomic game $G^n_k(\At\A)$, or simply $G^n_k$}, is the game played on atomic networks
of $\A$
using $n$ nodes and having $k$ rounds \cite[Definition 3.3.2]{HHbook2}, where
\pa\ is offered only one move, namely, {\it a cylindrifier move}: 
\begin{itemize}
\item Suppose that we are at round $t>0$. Then \pa\ picks a previously played network $N_t$ $(\nodes(N_t)\subseteq n$), 
$i<m,$ $a\in \At\A$, $\bar{x}\in {}^m\nodes(N_t)$, such that $N_t(\bar{x})\leq {\sf c}_ia$. For her response, \pe\ has to deliver a network $M$
such that $\nodes(M)\subseteq n$,  $M\equiv _i N$, and there is $\bar{y}\in {}^m\nodes(M)$
that satisfies $\bar{y}\equiv _i \bar{x}$, and $M(\bar{y})=a$. 
\end{itemize}
\item  We write $G_k(\At\A)$, or simply $G_k$, if $n\geq \omega$.
The {\it atomic game $F^n(\At\A)$, or simply $F^n$}, is like $G^n_{\omega}(\At\A)$ except that
\pa\ has the option to re--use the available
$n$ nodes during the play.
\end{enumarab}
\end{definition}

We let  $\bold S_c$ denote the operation of forming {\it complete} subalgebras.

\begin{lemma}\label{flat} Let $2<m<n$. If $\A\in \bold S_c{\sf Nr}_m\CA_{n}$ is atomic,  then  \pe\ has a \ws\ in $F^{n}$. 
In particular, if $\A$ is finite and \pa\ has a \ws\ in $F^{n}$, then $\A\notin \bold S{\sf Nr}_m\CA_{n}$.
\end{lemma}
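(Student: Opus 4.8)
The plan is the standard ``neat embedding yields a winning strategy'' argument for cylindric algebras, in the spirit of \cite{book}. Fix $\D\in\CA_{n}$ with $\A\subseteq_{c}\Nr_{m}\D$; since $\A$ is atomic and the embedding is complete, $\sum^{\D}\At\A=1$, which gives the preliminary observation I will use repeatedly: for every nonzero $d\in\D$ and every injective $\theta\colon m\to n$ there is an atom $a\in\At\A$ with $d\cdot{\sf s}_{\theta}^{\D}a\neq0$. (Indeed, cylindrify $d$ onto $\rng\theta$, relocate the result to $\{0,\dots,m-1\}$ so that it lands in $\Nr_{m}\D$, meet it with an atom of $\A$ using $\sum^{\D}\At\A=1$, then pull back using that cylindrifiers commute in $\D$ together with $\c_{j}(x\cdot\c_{j}y)=\c_{j}x\cdot\c_{j}y$.) For a network $N$ with $\nodes(N)\subseteq n$, write $\widehat N=\prod_{\bar b\in{}^{m}\nodes(N)}{\sf s}_{\theta_{\bar b}}^{\D}N(\bar b)$, a \emph{finite} product since $\nodes(N)\subseteq n$, where $\theta_{\bar b}\colon n\to n$ is a finite transformation with $\theta_{\bar b}(k)=b_{k}$ for $k<m$ (for non-injective $\bar b$ this relocation is read off the diagonal terms, as in \cite{book}; one first checks it is independent of the auxiliary choices).

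I would have \pe\ maintain the invariant that, after each round, her current network $N$ has $\nodes(N)\subseteq n$ and there is a nonzero $w\in\D$ with $w\le\widehat N$. The opening move is handled the standard way, using a network on at most $m$ mutually distinct nodes labelled by a single atom $a_{0}\in\At\A$ below the relevant diagonals, with $w=a_{0}\neq0$. Suppose at round $t>0$ \pa\ makes a cylindrifier move: he picks a previously played $N$, an index $i<m$, an atom $a\in\At\A$, and $\bar x\in{}^{m}\nodes(N)$ with $N(\bar x)\le\c_{i}a$. From the invariant, $w\le{\sf s}_{\theta_{\bar x}}^{\D}N(\bar x)\le{\sf s}_{\theta_{\bar x}}^{\D}\c_{i}a$. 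Then \pe\ picks a node $z\in n$ (fresh when possible; otherwise recycled, which $F^{n}$ may force, noting that $n>m$ leaves at most $m-1$ coordinates of $\bar x$ that must be preserved). Put $w_{1}=\c_{z}^{\D}w\ge w\neq0$; cylindrifying on $z$ kills exactly the factors of $\widehat N$ whose relocation mentions $z$, so $w_{1}$ still lies below the product of the remaining factors, and a routine computation with the cylindric and substitution axioms in $\D$ (using commutativity of cylindrifiers) yields a nonzero $w'\le w_{1}$ with $w'\le{\sf s}_{\theta_{\bar x[i/z]}}^{\D}a$. By the preliminary observation, \pe\ then shrinks $w'$ in finitely many further steps --- one for each of the finitely many tuples of the new network that mention $z$ --- to a nonzero $w''$ which determines, for each such tuple $\bar b$, a unique atom $M(\bar b)\in\At\A$ with $w''\le{\sf s}_{\theta_{\bar b}}^{\D}M(\bar b)$; on the tuples not mentioning $z$ she keeps $M=N$. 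Then $w''\le\widehat M$, so $M$ satisfies the network consistency conditions on ${\sf d}_{ij}$ and $\c_{i}$ (these transfer from $\D$ since every label is ``seen below'' the single nonzero element $w''$), while $M\equiv_{i}N$ and $M(\bar x[i/z])=a$ hold by construction, and $w''$ revalidates the invariant. Hence \pe\ never gets stuck, so she has a \ws\ in $F^{n}$; this proves the first assertion.

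For the ``in particular'' part, observe that a finite $\A$ is atomic and that a finite subalgebra of any Boolean algebra with operators is automatically a \emph{complete} subalgebra (the only suprema that exist in $\A$ are finite joins, which are preserved by the inclusion and remain suprema in the larger algebra). So $\A\in\bold S\Nr_{m}\CA_{n}$ would force $\A\in\bold S_{c}\Nr_{m}\CA_{n}$, whence by the first part \pe\ would have a \ws\ in $F^{n}$ --- incompatible with \pa\ having one, since no single play of $F^{n}$ can be won by both players. Therefore, if \pa\ has a \ws\ in $F^{n}$, then $\A\notin\bold S\Nr_{m}\CA_{n}$.

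I expect the main obstacle to be the node-recycling bookkeeping in $F^{n}$: checking that, when \pa\ forces \pe\ to reuse one of the $n$ nodes, the single dilation-witness can be updated (cylindrify on that node, then refine through the finitely many affected tuples) so that \emph{all} labels through the recycled node get consistently redefined and the invariant $w\le\widehat N$ is restored. This leans on $n>m$, on completeness of the embedding $\A\subseteq_{c}\D$ (hence $\sum^{\D}\At\A=1$, which underpins the preliminary observation and is precisely why $\bold S_{c}$ rather than $\bold S$ is assumed), and on the basic cylindric identities; one must also settle, once and for all, the meaning and well-definedness of the relocating substitutions $\theta_{\bar b}$ for non-injective $\bar b$ via the diagonal axioms, exactly as in the relation-algebra treatment in \cite{book}.
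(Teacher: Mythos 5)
Your argument is correct and is essentially the standard dilation--witness strategy that the paper invokes by simply citing \cite[Theorem 33]{r}: maintain a nonzero element of the $n$--dimensional dilation below the finite product of relocated labels of the current network, using $\sum^{\D}\At\A=1$ (which follows from atomicity plus the $\bold S_c$ hypothesis) to refine the witness after each cylindrifier move, including the node--recycling case. Your derivation of the second assertion from the first, via the observation that a finite subalgebra is automatically a complete subalgebra, is likewise the intended reading.
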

\begin{proof} \cite[Theorem 33]{r}.
\end{proof}

For $2<m<n<\omega$, $n$--square {\it complete} representations are defined exactly
like the classical case. In particular, any such representation is atomic. 

 \begin{lemma}\label{square} Let $2<m<n<\omega$ and $\A\in \CA_m$.
Then $\A$ has a complete  $n$--square representation $\iff$  
\pe\ has a \ws\ in $G_{\omega}^n(\At\A).$ 
\end{lemma}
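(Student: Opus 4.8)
The plan is to prove both directions by the standard back-and-forth/network correspondence, mirroring Lemma~\ref{neatsq} but for $n$--\emph{square} (rather than $n$--flat) representations and for \emph{complete} representations, so that the game $G_\omega^n(\At\A)$ governs squareness round by round.

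For the ($\Leftarrow$) direction, suppose \pe\ has a \ws\ in $G_\omega^n(\At\A)$. First I would run $\omega$ many plays, at each round confronting \pe\ with every possible cylindrifier demand, and assemble the networks that \pe\ produces into a single saturated \emph{tree} of atomic networks (equivalently, an $n$--dimensional basis in the sense of \cite[Definition 12.11]{book}, adapted to $\CA_m$). Because \pe\ wins, every cylindrifier requirement is eventually met by some node among the available $n$, so the basis is ``$n$--square complete''. Then I would glue these networks together in a step-by-step construction: the nodes of the limit structure $M$ are the nodes appearing along a generic branch, the $m$--hyperedge relation $1^M = {\sf C}^n(M)$ is read off the $m$--tuples of nodes that carry a network value, and for $a\in\At\A$ we set $M\models a(\bar s)\iff N(\bar s)=a$ for the (unique, by consistency) network $N$ covering $\bar s$. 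The map $f:\A\to\wp(V)$, $f(x)=\{\bar s\in V: (\exists a\in\At\A)\,a\le x \text{ and } M\models a(\bar s)\}$, is then checked to be an embedding; the consistency conditions of an atomic network give that diagonals and Boolean operations are respected, \pe's winning responses to cylindrifier moves give that projections are respected \emph{in the $m$--clique-guarded sense}, which is exactly $n$--squareness, and atomicity of the representation is automatic since each $\bar s$ carries a single atom. Since $\bigcup_{x\in\At\A}f(x)=1^M$, the representation is complete.

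For the ($\Rightarrow$) direction, suppose $f:\A\to\wp(V)$ is a complete $n$--square representation with $\bigcup_{s\in V}\rng(s)=M$. I would have \pe\ maintain, as her strategy, a finite partial embedding of the current network into $M$: given a network $N_t$ with $\nodes(N_t)\subseteq n$ she keeps a map $v_t:\nodes(N_t)\to M$ such that for all $\bar x\in{}^m\nodes(N_t)$ one has $v_t\circ\bar x\in{\sf C}^n(M)$ and $M\models N_t(\bar x)(v_t\circ\bar x)$. When \pa\ plays a cylindrifier move $N_t(\bar x)\le{\sf c}_i a$, the corresponding tuple $v_t\circ\bar x$ lies in ${\sf C}^n(M)$, and $M\models{\sf c}_i a(\ldots)$ along the injection picking out the relevant coordinates; $n$--squareness of $M$ (Definition~2's clause (2)) supplies a $\bar t\in{\sf C}^n(M)$ with $\bar t\equiv_i\bar s$ realizing $a$, which \pe\ pulls back to define the new node and the new network value (atomicity / complete representation ensures that the relation $a$ sits below a single atom that she can name). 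This response is consistent, and since the run has $\omega$ rounds \pe\ never gets stuck, so she wins $G_\omega^n(\At\A)$.

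The main obstacle I expect is bookkeeping in the ($\Leftarrow$) direction: ensuring the glued-together limit structure genuinely satisfies the relativized semantics $\models_c$ on the full $m$--Gaifman hypergraph ${\sf C}^n(M)$ and not just on the networks one happens to have drawn — i.e.\ that the basis is rich enough (saturated under all cylindrifier demands at all already-built nodes) that no $m$--clique is ``left behind'' without a covering network, and that the different copies of networks agree on overlaps. This is handled exactly as in the relation-algebra proof of \cite[Lemmata 13.33--13.35, Proposition 13.36]{book} and the $\CA$ adaptation behind Lemma~\ref{neatsq}; the only genuinely new point is that for \emph{square} (as opposed to flat) representations we do \emph{not} need the commutativity-of-cylindrifiers condition, so the construction is in fact slightly easier, and for \emph{complete} representations we additionally carry atoms rather than arbitrary elements, which is precisely what makes $f$ carry suprema to unions.
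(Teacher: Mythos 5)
Your argument is correct, but for the implication the paper actually proves it takes a genuinely different route, and you additionally supply the converse, which the paper explicitly skips (``we prove $\implies$ which is all what we need''). For the forward direction the paper does not let \pe\ play against the model directly: from the complete $n$--square representation $M$ it first builds an $n$--dimensional dilation $\D$ with universe ${\sf C}^n(M)$ using $L_{\infty,\omega}$ clique--guarded semantics, reads off from each $\bar{a}\in 1^{\D}$ a labelled hypergraph $N_{\bar{a}}$ on the node set $n$, closes this collection under permutations to get an $n$--dimensional basis $H$, and then has \pe\ win by always playing subnetworks of members of $H$. Your strategy --- maintaining for each played network a partial embedding $v_t$ of its nodes into $M$ whose range is an $m$--clique, and invoking the $n$--squareness clause to extend $v_t$ at every cylindrifier move, with completeness of the representation used to read off atom labels on all new $m$--tuples --- reaches the same conclusion more directly. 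This buys two things: it dispenses with the intermediate object $\D$ (whose membership in $\CA_n$ is, strictly speaking, what $n$--\emph{flatness} rather than mere squareness delivers in Lemma~\ref{neatsq}, so your detour--free argument is on safer ground there), and it isolates exactly where squareness is used; what the paper's route buys instead is the basis $H$ as a reusable combinatorial object in the style of \cite{book}. Your sketch of the converse is the standard extraction--and--gluing argument and is adequate at this level of detail; the one phrasing to repair is that the representation is built not from the nodes of a single branch but from the saturated set of \emph{all} networks reachable under \pe's \ws\ (which is automatically closed under defect--repair, by extending any witnessing play one round, so no countability hypothesis on $\At\A$ is needed), amalgamated over $m$--cliques as in \cite[Lemmata 13.33--13.35]{book}.
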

\begin{proof}
We prove $\implies$ which is all what we need. Let $M$ be a complete  $n$--square representation of $\A$.
One proceeds like in the proof of lemma \ref{neatsq}, but using $L_{\infty,\omega}$ formulas in the signature $\L(\A)^n$ to build the required 
dilation.
Construct an $n$--dimensional dilation $\D$ with universe 
${\sf C}^{n}(M)$ and operations induced by clique guarded semantics by defining for $s\in {\sf C}^n(M)$, 
and $\phi_i\in \L(\A)^n$ ($i\in I\neq \emptyset$), $M, s\models _c \bigwedge_{i\in I} \phi_i\iff M,s\models_c \phi_i$ for all $i\in I$.
Then $\D$ will be an atomic $\CA_n$ \cite[Item (3), Theorem 13.20]{book}; here infinite conjunctions are used. 

For each $\bar{a}\in 1^{\D},$ define \cite[Definition 13.22] {book} a labelled
hypergraph $N_{\bar{a}}$ with nodes $n$, and
$N_{\bar{a}}(\bar{x})$ when $|\bar{x}|=m$, is the unique atom of $\A$
containing the tuple
$(a_{x_0},\ldots, a_{x_{1}},\ldots, a_{x_{m-1}}, a_{x_0}\ldots,\ldots a_{x_0})$ of length $n>m.$
If $s\in 1^{\D}$ and $i, j<n$,
then $s\circ [i|j]\in 1^{\D}$. By \cite[Lemma 13.24]{book} $N_{\bar{a}}$ is a
network with $\nodes(N)=n$.
Let $H$ be the {\it symmetric closure}
of $\{N_{\bar{a}}: \bar{a}\in 1^{\D}\}$, that is $\{N\theta: \theta: m\to m, N\in H\}$. (Here $N\theta$ is defined by 
$N\theta(x_0,\ldots, x_{m-1})=N(\theta(x_m),\ldots, \theta(x_{m-1})\}$).
Then $H$ is an $n$--dimensional basis generalized to $\CA$s the obvious way \cite[Lemma 13.26]{book}. 

Now \pe\ can win $G_{\omega}^n$ by always
playing a subnetwork of a network in the constructed basis $H$.
In round $0$, when \pa\ plays
the atom $a\in \A$, \pe\ chooses $N\in H$ with $N(0,1,\ldots, m-1)=a$ and plays $N\upharpoonright m$.
In round $t>0$, inductively if the current network is $N_{t-1}\subseteq M\in H$, then no matter how \pa\ defines $N$, we have
$N\subseteq M$ and $|N|<n$, so there is $z<n$, with $z\notin \nodes(N)$.
Assume that  \pa\ picks $x_0,\ldots, x_{m-1}\in \nodes(N)$, $a\in \At\A$ and $i<m$, such that
$N(x_0,\ldots, x_{m-1})\leq {\sf c}_ia$, so $M(x_0, \ldots,  x_{m-1})\leq {\sf c}_ia$.
But  $H$ is an $n$--dimensional basis, so there is $M'\in H$ with $\nodes(M')\subseteq n$, such that
$M'\equiv _i M$ and $M'(x_0, \ldots, z, \ldots,  x_{m-1})=a$, with $z$ in the $i$th place.
Now \pe\ responds with $N_t=M'\upharpoonright \nodes(N)\cup \{z\}$.
\end{proof}

{\bf Rainbow constructions:} In our next theorem we use a rainbow construction so we need to review some notions and terminology. 
Let $2<m<\omega$. The most general exposition of $\CA_m$ rainbow constructions is given
in \cite[Section 6.2,  Definition 3.6.9]{HHbook2} in the context of constructing atom structures from classes of models.

Our models are just {\it coloured graphs} \cite{HH} which are complete graphs whose edges are labelled by the rainbow colours, $\g$ (greens), $\r$ (reds), and 
$\w$ (whites) satisfying certain consistency conditions. The greens are 
$\{\g_i: 1\leq i< m-1\}\cup \{\g_0^i: i\in \sf G\}$  and the reds are $\{\r_{ij}: i,j \in \sf R\}$ where
$\sf G$ and $\sf R$ are two relational structures. The whites are $\w_i: i\leq m-2$.
In coloured graphs the following triangles are forbidden:
\begin{eqnarray*}
&&\nonumber\\
(\g, \g^{'}, \g^{*}), (\g_i, \g_{i}, \w_i)
&&\mbox{any }1\leq i\leq  m-2,  \\
(\g^j_0, \g^k_0, \w_0)&&\mbox{ any } j, k\in \sf G,\\
\label{forb:match}(\r_{ij}, \r_{j'k'}, \r_{i^*k^*})&&\mbox{unless }i=i^*,\; j=j'\mbox{ and }k'=k^*.
\end{eqnarray*}
Also, in coloured graphs some $m-1$ tuples (hyperedges) are also labelled by shades of yellow $\y_S$ (some $S\subseteq \sf G$) \cite[4.3.3]{HH}. 
We follow verbatim \cite{HH} for rainbow constructions.  We recall the definition of cones which are special coloured graphs.
\begin{definition}\label{cone}
Let $i\in {\sf G}$, and let $M$ be a coloured graph consisting of $m$ nodes
$x_0,\ldots,  x_{m-2}, z$. We call $M$ {\it an $i$ - cone} if $M(x_0, z)=\g_0^i$
and for every $1\leq j\leq m-2$, $M(x_j, z)=\g_j$,
and no other edge of $M$
is coloured green.
$(x_0,\ldots, x_{m-2})$
is called  the {\it base of the cone}, $z$ the {\it apex of the cone}
and $i$ the {\it tint of the cone}.
\end{definition}
The atoms of a rainbow atom structure of dimension $m$ are equivalence classes of surjective maps $a:m\to \Delta$, where $\Delta$ is a coloured graph
in the rainbow signature, and the equivalence relation relates two such maps $\iff$  they essentially define the same graph \cite[4.3.4]{HH};
the nodes are possibly different but the graph structure is the same. We let $[a]$ denote the equivalence class containing $a$.
The accessibility binary relation corresponding 
to the $i$th  cylindrifier $(i<m)$ is defined by:  $[a] T_i [b]\iff a\upharpoonright m\sim \{i\}=b\upharpoonright m\sim \{i\},$ 
and the accessibility unary relation corresponding to the $ij$th diagonal element ($i<j<m$) is defined by: $[a]\in D_{ij}\iff a(i)=a(j)$.

For rainbow atom structures, there is a one to one correspondence between atomic networks and coloured graphs \cite[Lemma 30]{HH}, 
so for $2<m<n\leq \omega$, we use the graph versions of the games $G^n_k$, $k\leq \omega$, and  $F^n$ played on rainbow atom 
structures of dimension $m$ \cite[pp.841--842]{HH}. Recall that $F^n$ is like $G_{\omega}^n$ \cite[pp.841]{HH}, 
but now played on coloured graphs where \pa\ has the bonus to re-use the $n$ nodes in play.

A \ws\  for either player in the graph version of $G^n_k$
is dictated by  a \ws\ for the same player in a simple private 
\ef\ forth game having $n'\leq n$ pairs of pebbles and $k'\leq k$ rounds.  
This game, denoted below by ${\sf EF}_{k'}^{n'}(\sf G, R)$, is played on the 
two relational structures 
$\sf G$ and $\sf R$ \cite[Definition 16.2, Theorem  16.5]{book}.
In sharp (and interesting) contrast to the omitting 
types theorem proved in theorem \ref{interpolation}, we have:

\begin{theorem}\label{OTT}
Let $2<m<\omega$ and $n\in \omega$ such that $n\geq m+3$. Then there exists a countable, atomic  and 
complete theory $T$ using $m$ variables, with consequence relation defined semantically,
that is $\Fm_T$ is an atomic (countable) $\sf RCA_m$, such that if $\Gamma$ is the non--principal type consisting of co-atoms; $\Gamma=\{\neg \phi: \phi_T\in \At\Fm_T\}$,
then  $\Gamma$ is not omitted in any $n$--square model, {\it a fortiori}, in any $n$--flat one.
\end{theorem}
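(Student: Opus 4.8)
The plan is to realise $T$ as the first order theory attached to a rainbow cylindric algebra whose atom structure separates ordinary representability from the existence of a complete $n$--square representation; once such an algebra is in hand, the type of co--atoms becomes non--omittable in $n$--square models almost for free, through Lemma \ref{square}. First I would fix the two relational structures $\sf G$ (green indices) and $\sf R$ (red indices) of the rainbow construction of \cite{HH}, choosing them so that the rainbow $\CA_m$ atom structure $\alpha$ they determine satisfies: (i) the term algebra $\A:=\Tm\alpha$ is representable, i.e.\ $\A\in{\sf RCA}_m$, and (ii) \pa\ has a \ws\ in the atomic game $G^n_\omega(\At\A)$ for every $n\geq m+3$. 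One takes $\sf G$ infinite and $\sf R$ calibrated so that in the \ef\ forth game ${\sf EF}(\sf G,\sf R)$ governing the graph versions of the rainbow games \cite[Definition 16.2, Theorem 16.5]{book}, \pe\ can keep a consistent colouring through $\omega$ rounds as long as she need only do bookkeeping, while already $n=m+3$ nodes give \pa\ enough room to erect an $(m-1)$--clique together with the apexes of two cones of distinct green tints over that clique, forcing a red triangle forbidden by the coloured--graph conditions. At the same time the chosen $\sf G,\sf R$ still admit a (necessarily non--atomic) winning strategy for \pe\ in the representation game, so $\A$ lands in ${\sf RCA}_m$; it is moreover countable, simple, atomic and completely additive (as in \cite{HH}). \emph{Carrying out (i) and (ii) together, that is transferring the two opposed strategies back and forth between the cylindric rainbow games and ${\sf EF}(\sf G,\sf R)$, is the substantive part of the argument and the step I expect to be the main obstacle; it is also where the bound $n\geq m+3$ gets pinned down.}

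Next I would move to the logic side. Let $T$ be the $m$--variable first order theory, with semantic consequence, whose Tarski--Lindenbaum $\CA_m$ is isomorphic to $\A$: take one $m$--ary relation symbol for each element of a generating set of $\A$ and let $T$ consist of exactly the $L^m$--sentences sent to $1$ by $\A$. Simplicity of $\A$ makes $T$ complete; atomicity of $\A$ makes $\Fm_T\cong\A$ an atomic countable ${\sf RCA}_m$; and $T$ has models since $\A$ is representable. Put $\Gamma=\{\neg\phi:\phi_T\in\At\Fm_T\}$. As $\prod_{\phi\in\Gamma}\phi_T=-\sum_{a\in\At\A}a=0$, the type $\Gamma$ is non--principal.

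Finally, suppose for contradiction that some $n$--square model $M$ of $T$ omits $\Gamma$, witnessed by an isomorphism $f:\Fm_T\to\wp(V)$ with $V\subseteq{}^mM$, $\bigcup_{s\in V}\rng(s)=M$ and $\bigcap_{\phi\in\Gamma}f(\phi_T)=\emptyset$. The last equality says $V\setminus\bigcup_{a\in\At\A}f(a)=\emptyset$, i.e.\ $f$ is an \emph{atomic} $n$--square representation of $\A$; since $\A$ is atomic and completely additive, such a representation is complete in the sense of \cite{HH}, so $\A$ has a complete $n$--square representation. By Lemma \ref{square}, \pe\ then has a \ws\ in $G^n_\omega(\At\A)$, contradicting (ii). Hence no $n$--square model of $T$ omits $\Gamma$; and since every $n$--flat representation is $n$--square, the same conclusion holds \emph{a fortiori} for $n$--flat models, as claimed.
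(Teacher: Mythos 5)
Your logic--side setup and your closing reduction (omitting $\Gamma$ forces an atomic, hence complete, $n$--square representation, which Lemma \ref{square} converts into a \ws\ for \pe) are exactly right and coincide with what the paper does. The gap is in the step you yourself flag as the main obstacle: producing a single rainbow atom structure $\alpha$ with $\Tm\alpha\in{\sf RCA}_m$ on which \pa\ wins $G^{n}_\omega(\alpha)$ \emph{directly}, by calibrating an infinite $\sf G$ against $\sf R$. These two demands pull in opposite directions. For \pa\ to win the no--reuse game $G^{m+3}_\omega$ he must force a forbidden configuration after introducing only boundedly many fresh nodes, which requires a finitary obstruction in the colours (finitely many greens outnumbering finitely many reds); but a rainbow algebra carrying such a finitary obstruction is not representable at all, so your condition (i) fails. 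Conversely, the standard way to restore representability of the term algebra is to split each red into infinitely many copies, and then \pe\ survives $G^{m+3}_\omega$ on the split structure simply by always reaching for fresh copies; the calibrations of the kind you describe (an infinite ordered $\sf G$ against $\sf R$, as with $\Z$ against $\N$ in Theorem \ref{OTT2}) only let \pa\ win the game $F^{m+3}$ \emph{with node reuse} over infinitely many rounds, which suffices for the flat statement but not for the square one.

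The paper's resolution is to separate the two roles. It takes the \emph{finite} rainbow algebra $\D=\CA_{m+1,m}$ (greens based on the complete irreflexive graph $m+1$, reds on $m$), on whose atom structure \pa\ wins $G^{m+3}_\omega$ in finitely many rounds; it then blows up and blurs $\D$ into an infinite atom structure $\At$ whose term algebra $\A=\Tm\At$ is representable, countable, simple and atomic; and it embeds $\D$ not into $\A$ (impossible, since $\A$ is representable and $\D$ is not) but into the \d\ completion $\Cm\At$ via $M_a\mapsto\sum_j M_a^{(j)}$. The missing link in your argument is then supplied by the observation that, because $\At\A=\At\Cm\At$, any atomic (equivalently complete) $n$--square representation $g$ of $\A$ extends to an $n$--square representation of $\Cm\At$ by $a\mapsto\bigcup_{x\in\At\A,\ x\leq a}g(x)$, hence restricts to one of $\D$ --- contradicting \pa's \ws\ on $\At\D$. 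So the game--theoretic obstruction is never exhibited on $\At\A$ itself; it lives on a finite algebra sitting inside the completion, and the passage through the completion is the bridge your proposal lacks.
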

\begin{demo}{A fairly complete sketch}  Throughout the proof $m$ is fixed to be a finite ordinal $>2$.
Let $\A$ be an atomic, countable and simple ${\sf RCA}_m$, such that its \d\ completion $\Cm\At\A$ is not in $\bold S{\sf Nr}_m\CA_{m+3}$.
Such an algebra exists as we proceed to show.

{\bf Idea:} The argument used  is a combination of the rainbow construction in \cite{Hodkinson} which is implemented model--theoretically, 
together with the blow up and blur construction used in \cite{ANS}. The idea  is  to embed a finite (rainbow) algebra $\D\notin \bold S{\sf Nr}_m\CA_{m+3}$ 
in the \d\ completion of an atomic (infinite) algebra $\A\in {\sf RCA}_m$, where $\A$ 
is obtained by blowing up and blurring $\D$. The `blowing up' is done by splitting some of the atoms of $\D$ each into
infinitely many atoms (of  $\A$). The term `blur' refers to the fact that the algebraic structure of $\D$ is {\it blurred} at the level of $\A$, $\D$ {\it does not embed} into $\A$.
However, the algebraic structure of $\D$ is {\it not blurred} at the `global level of $\Cm\At\A$', 
because $\D$ embeds into $\Cm\At\A$. 

The proof  of the existence of $\A$ as alleged is divided into three parts. In the first part we blow up and blur a finite rainbow algebra $\D$, 
denoted below by $\CA_{m+1, m}$, 
by splitting some of the atoms (the red ones), 
each into infinitely many, getting a weakly representable atom structure $\At$.
This means  that the  term algebra on $\At$, which is the subalgebra of the complex algebra $\Cm\At$ generated by the atoms, in symbols $\Tm\At$,
is representable.
In the second part, we embed $\D$ into  $\Cm\At$, which is the \d\ completion of $\Tm\At$.
In the third part, we show that  \pa\ has a \ws\ in  $G^{m+3}(\At\D)$, hence {\it a fortiori} in the game 
$F^{m+3}(\At\D)$ (where he is allowed to re-use the $m+3$ nodes in play). This,  together with lemmata \ref{flat}
and \ref{square}, imply that $\D\notin
\bold S{\sf Nr}_m\CA_{m+3}$ and that $\D$ has no $m+3$--square representation. Since $\D$ embeds into $\Cm\At$, 
we conclude that  $\Cm\At$ is outside $\bold S{\sf Nr}_m\CA_{m+3}$ and it  has no $m+3$--square representation, as well. 
In particular, $\Cm\At$ is not representable, hence $\At$ is {\it not strongly representable}
obtaining the result in 
\cite{Hodkinson} as a special case. Now we give the details:

{\bf (1) Blowing up and blurring  $\CA_{m+1, m}$ forming a weakly representable atom structure $\At$:}
Take the finite rainbow cylindric algebra $R(\Gamma)$
as defined in \cite[Definition 3.6.9]{HHbook2},
where $\Gamma$ (the reds) is taken to be the complete irreflexive graph $m$, and the greens
are  $\{\g_i:1\leq i<m-1\}
\cup \{\g_0^{i}: 1\leq i\leq m+1\}$ so tht $\sf G$ is the complete irreflexive graph $m+1$.
Call this finite rainbow $m$--dimensional cylindric algebra, based on ${\sf G}=m+1$ and ${\sf R}=m$, $\CA_{m+1, m}$ and denote its atom structure by $\At_f$.
One  then replaces each  red colour
used in constructing  $\CA_{m+1, m}$  by infinitely many with superscripts from $\omega$, 
getting a weakly representable atom structure $\At$, that is,
the term algebra $\Tm\At$ is representable.
The resulting atom structure (with $\omega$--many reds),  call it $\At$, 
is the rainbow atom structure that is like the atom structure in \cite{Hodkinson} except that we have $m+1$ greens
and not infinitely many as is the case in \cite{Hodkinson}.

Everything else is the same. In particular, the rainbow signature \cite[Definition 3.6.9]{HHbook2} now consists of $\g_i: 1\leq i<m-1$, $\g_0^i: 1\leq i\leq m+1$,
$\w_i: i<m-1$,  $\r_{kl}^t: k<l< m$, $t\in \omega$,
binary relations, and $m-1$ ary relations $\y_S$, $S\subseteq m+1$.
There is a shade of red $\rho$; the latter is a binary relation that is {\it outside the rainbow signature}.
But $\rho$ is used as a  label for  coloured graphs built during a `rainbow game', and in fact, \pe\ can win the rainbow $\omega$--rounded game
and she builds an $m$--homogeneous (coloured graph) model $M$ by using $\rho$ when
she is forced a red \cite[Proposition 2.6, Lemma 2.7]{Hodkinson}.
Then $\Tm\At$ is representable as a set algebra with unit $^mM$; this can be proved exactly as in \cite{Hodkinson}.
 By $m$--homogeneity, is meant that every coloured graph of size $\leq m$ embeds into $M$, and that 
such coloured graphs are uniquely determined by
their isomorphism types, regardless of their
location in $M$.

Having $M$ at hand, one constructs  an atomic $m$--dimensional set algebras based on $M$.
In more detail, let
$W = \{ \bar{a} \in {}^m M : M \models ( \bigwedge_{i < j <m} \neg \rho(x_i, x_j))(\bar{a}) \},$
and for $\phi\in L_{\infty, \omega}^m$, let
$\phi^W=\{s\in W: M\models \phi[s]\}.$
Here $W$ is the set of all $m$-ary assignments in 
$^mM$, that have no edge labelled by $\rho$.
Let $\A$  be the relativized set algebra with domain
$\{\varphi^W : \varphi \,\ \textrm {a first-order} \;\ L_m-
\textrm{formula} \}$  and unit $W$, endowed with the algebraic
operations ${\sf d}_{ij}, {\sf c}_i, $ etc., $(i<m)$ in the standard way, and formulas are taken in the 
rainbow signature only (without $\rho$).

Classical semantics for $L_m$ rainbow formulas and their
semantics by relativizing to $W$ coincide.
That is if $\cal S$ is the  set algebra with domain  $\wp ({}^{m} M )$ and
unit $ {}^{m} M$, then the map $h : \A
\longrightarrow \cal S$ given by $h:\varphi ^W \longmapsto \varphi^{M}=\{ \bar{a}\in
{}^{m} M: M \models \varphi (\bar{a})\}$ is both 
well--defined and an injective homomorphism \cite[Proposition 3.13]{Hodkinson}.
This depends essentially on \cite[Lemma 3.10]{Hodkinson} which says that any permutation $\chi$ of $\omega\cup \{\rho\}$,
$\Theta^{\chi}$ as defined in  \cite[Definitions 3.9, 3.10]{Hodkinson} 
is an $m$--back 
and--forth system.    
The system $\Theta^{\chi}$ consists of $\chi$ isomorphisms between coloured graphs having the same size $\leq m$ in the following sense.
Let $\chi$ be a permutation of the set $\omega \cup \{ \rho\}$. Let
$ \Gamma, \triangle$ be coloured graphs that have the same size, and let $ \theta :
\Gamma \rightarrow \triangle$ be a bijection. We say that $\theta$
is a $\chi$-\textit{isomorphism} from $\Gamma$ to $\triangle$ if for
each distinct $ x, y \in \Gamma$,
if $\Gamma ( x, y) = \r_{jk}^i$,
then  $\triangle( \theta(x),\theta(y)) =\r_{jk}^{\chi(i)}$ if $\chi(i) \neq \rho$ 
and is equal to $\rho$  otherwise.

If $\Gamma ( x, y) = \rho$, then $\triangle( \theta(x),\theta(y))=\r_{jk}^{\chi(\rho)}$ if $\chi(\rho) \neq \rho$
and is equal to $\rho$ otherwise.
Finally,  $\Gamma(x,y)$ is not red then $\Delta(\theta(x), \theta(y))=\Gamma(x, y).$
One uses such $m$--back--and--forth systems mapping a 
tuple $\bar{b} \in {}^m M \backslash W$ to a tuple
$\bar{c} \in W$ preserving any formula containing the non-red symbols that are
`moved' by the system, so if $\bar{b}\in {}^mM$ refutes the $L_m$ rainbow formula  $\phi$, then there is a $\bar{c}$ in $W$ 
refuting $\phi$, so  the set algebra $\A$ is embeddable in $\cal S$.

Since $\A$ is in $\bold I{\sf Cs}_m$, then $\A$ is simple. But $\Tm\At\subseteq \A$ (they have the same atom structure),  
then $\Tm\At$ is simple and representable, too  (a subalgebra of a simple algebra is simple).
The algebras $\Tm\At\subseteq \A\subseteq \Cm\At$ share the same atom structure. Also,  $\Cm\At=\Cm\At\A$ is the \d\ completion of the other two (countable) algebras. 
The atoms of all three algebras consist of 
(equivalence classes) of surjections $a:m\to \Gamma$, $\Gamma$ a coloured graph, 
whose edges are not labelled by $\rho$; here only the rainbow colours corresponding to the above infinite rainbow signature are used. In the formula algebra
$\A$ such atoms are expressed semantically by 
so--called $\sf MCA$ formulas \cite[Definition 4.3]{Hodkinson}.

{\bf (2) Embedding $\CA_{m+1, m}$ into the \d\ completion of $\Tm\At$:} 
We embed $\CA_{m+1, m}$ into  the complex algebra $\Cm\At$, the \d\ completion of $\Tm\At$.
Let ${\sf CRG}_f$ denote  the class of coloured graphs on 
$\At_f$ and $\sf CRG$ be the class of coloured graph on $\At$. We 
can assume that  ${\sf CRG}_f\subseteq \sf CRG$.

Write $M_a$ for the atom that is the (equivalence class of the) surjection $a:m\to M$, $M\in \sf CGR$.
Here we identify $a$ with $[a]$; no harm will ensue.
We define the (equivalence) relation $\sim$ on $\At$ by
$M_b\sim N_a$, $(M, N\in {\sf CGR}):$
\begin{itemize}
\item $a(i)=a(j)\Longleftrightarrow b(i)=b(j),$

\item $M_a(a(i), a(j))=\r^l\iff N_b(b(i), b(j))=\r^k,  \text { for some $l,k$}\in \omega,$

\item $M_a(a(i), a(j))=N_b(b(i), b(j))$, if they are not red,

\item $M_a(a(k_0),\dots, a(k_{m-2}))=N_b(b(k_0),\ldots, b(k_{m-2}))$, whenever
defined.
\end{itemize}
We say that $M_a$ is a {\it copy of $N_b$} if $M_a\sim N_b$. 
We say that $M_a$ is a {\it red atom} if it has at least one edge labelled by a red rainbow colour $\r_{ij}^l$ for some $i<j<m$ and $l\in \omega$. 
Clearly every red atom $M_a$ has infinitely countable many red copies, which we denote by $\{M_a^{(j)}: j\in \omega\}$.
Now we define a map $\Theta: \CA_{m+1, m}=\Cm{\At_f}$ to $\Cm\At$,
by  specifing  first its values on ${\sf At}_f$,
via $M_a\mapsto \sum_jM_a^{(j)}$; each atom maps to the suprema of its 
copies.  If $M_a$ is not red,   then by $\sum_jM_a^{(j)}$,  we understand $M_a$.
This map is extended to $\CA_{m+1, m}$ the obvious way by $\Theta(x)=\bigcup\{ \Theta(y):y\in \At\CA_{m+1, m}, y\leq x\}$. The map
$\Theta$ is well--defined, because $\Cm\At$ is complete. 
It is not hard to show that the map $\Theta$ 
is an injective homomorphim. 
Injectivity follows from the fact that $M_a\leq f(M_a)$, hence $\Theta(x)\neq 0$ 
for every atom $x\in \At(\CA_{m+1, m})$.
Now we check the preservation of diagonal elements and cylindrifiers. 
\begin{itemize}
\item Diagonal elements: Let $i< j<m$ and $x:m\to \Gamma$, $\Gamma\in \sf CGR$. 
Then:
$$M_x\leq \Theta({\sf d}_{ij}^{\Cm\At_f})\iff\ 
M_x\leq \sum \bigcup_{a_i=a_j}M_a^{(j)}\iff 
M_x\leq \bigcup_{a_i=a_j}\sum_j M_a^{(j)}$$
$$\iff  M_x=M_a^{(j)}  \text { for some $a: m\to M$, $a(i)=a(j)$}
\iff M_x\in {\sf d}_{ij}^{\Cm\At}.$$

\item Cylindrifiers: Let $i<m$. By additivity of cylindrifiers, we restrict our attention to atoms 
$M_a\in \At_f$ with $a:m\to M$, and $M\in \sf CRG_f\subseteq \sf CRG$. Then: 
$$\Theta({\sf c}_i^{\Cm\At_f}a)=\Theta (\bigcup_{[c]\equiv_i[a]} M_c)
=\bigcup_{[c]\equiv_i [a]}\Theta(M_c)
=\bigcup_{[c]\equiv_i [a]}\sum_j M_c^{(j)}$$
$$=\sum_j \bigcup_{[c]\equiv_i [a]}M_c^{(j)}
=\sum _j{\sf c}_i^{\Cm\At}M_a^{(j)}
={\sf c}_i^{\Cm\At}(\sum_j M_a^{(j)})
={\sf c}_i^{\Cm\At}\Theta(a).$$
\end{itemize}
{\bf (3) A \ws\ for \pe\ in $G^{m+3}\At(\CA_{m+1, m})$:}
One first shows that \pa\ has a \ws\ in the \ef\ forth pebbled game 
 ${\sf EF}_r^p( m+1, m)$ \cite[Definition 16.2]{HHbook2}, 
played on the complete irreflexive graphs $m+1$ and $m$ since $m+1$ is `longer'. Here $r$ is the number of rounds and $p$ is the number of pairs of pebbles
on board. Using $p>m$ many pairs of pebbles \pa\ can win this game in $m+1$ many rounds.
In each round $0,1\ldots m$, \pe\ places a  new pebble  on  a new element of $m+1$.
The edge relation in $m$ is irreflexive so to avoid losing
\pe\ must respond by placing the other  pebble of the pair on an unused element of $m$.
After $m$ rounds there will be no such element, so she loses in the next round.
This game lifts to a graph game  \cite[pp.841]{HH} on $\At_f$ which  in this
case is equivalent to the graph version of $G^{m+3}$.

Now  \pa\ can lift his \ws\ in ${\sf EF}^{m+1}_{m+1}(m+1, m)$ to the graph game $G^{m+3}\At(\CA_{m+1, m})$ on $\At_f$ as follows: 
Like in \cite{HH}, using the notation in {\it op.cit}, \pa\ bombards \pe\ with cones have the same base and green tints,
forcing \pe\ to play an inconsistent triple of reds, that is a red triangle whose indices do not match. 
In his zeroth move, \pa\ plays a graph $\Gamma$ with
nodes $0, 1,\ldots, m-1$ and such that $\Gamma(i, j) = \w_0 (i < j <
m-1), \Gamma(i, m-1) = \g_i ( i = 1,\ldots, m-2), \Gamma(0, m-1) =
\g^0_0$, and $ \Gamma(0, 1,\ldots, m-2) = \y_{m+1}$. This is a $0$-cone
with base $\{0,\ldots, m-2\}$. In the following moves, \pa\
repeatedly chooses the face $(0, 1,\ldots, m-2)$ and demands a node
$\alpha$ with $\Phi(i,\alpha) = \g_i$, $(i=1,\ldots, m-2)$ and $\Phi(0, \alpha) = \g^\alpha_0$,
in the graph notation -- i.e., an $\alpha$-cone, $\alpha\leq m+2$,  on the same base.
\pe\ among other things, has to colour all the edges
connecting new nodes created by \pa\ as apexes of cones based on the face $(0,1,\ldots, m-2)$. By the rules of the game
the only permissible colours would be red. Using this, \pa\ can force a
win, using $m+3$ nodes.

Then by lemma \ref{flat} this implies that  $\CA_{m+1, m}\notin
\bold S{\sf Nr}_m\CA_{m+3}$. Since $\CA_{m+1,m}$ embeds into $\Cm\At$, 
hence $\Cm\At$  is outside 
$\bold S{\sf Nr}_m\CA_{m+3}$, too. Also by lemma \ref{square}, the finite algebra $\D$, hence $\Cm\At$, 
does not have an $m+3$--square representation, because
\pa\ has a \ws\ in $G^{m+3}(\At\D)$ and $\D$ embeds into $\Cm\At$.

Using the algebra $\A$ (or $\Tm\At\A)$, we are now ready to prove the failure 
of the omitting types theorem as stated in the next  
statement highlighted in bold, thereby proving the theorem.

{\bf The non--principal type of co--atoms of $\Tm\At$ cannot be omitted in an $m+3$--square model:}
First, we claim that  $\A=\Tm\At$ has no complete $m+3$--square representation. Assume for contradiction that $M$
is a complete $m+3$--square representation of $\A$. Hence there exists an injective homomorphism $g:\A\to \wp(V)$
where $V\subseteq {}^mM$ and $\bigcup_{s\in V}\rng(s)=M$ and since $g$ is  also an atomic $m+3$--square 
representation, then $\bigcup_{x\in \At\A}g(x)=V$.
Observe  that $\At\A=\At\Cm\At\A$. Accordingly, one can define $f:\Cm\At\A\to \wp(V)$
by $f(a)=\bigcup_{x\in \At\A, x\leq a} g(x)$ $(a\in \Cm\At\A)$.
Then $f$ induces an $m+3$--square  representation of $\Cm\At\A,$ so
$\CA_{m+1, m}$ has an $m+3$--square representation, too, since it embeds in $\Cm\At\A$.
But this is impossible by lemma \ref{square}, because as shown above \pa\ has a \ws\ in $G^{m+3}_{\omega}(\At\CA_{m+1, m})$
(in only finitely many rounds)
and an $m+3$--square representation of $\CA_{m+1, m}$ is plainly a complete 
one.

Now we prove 
the theorem. We can identify $\A$ with $\Fm_T$ for some countable, consistent and complete atomic
theory $T$ using $m$ variables.  The theory $T$ is consistent because $|A|>1$, $T$ is complete because $\A$ is simple, and 
$T$ is atomic because $\A$ is atomic. 
Let $\Gamma=\{\neg\phi: \phi_T\in \At\Fm_T\}$. Then $\Gamma$ is non--principal, because 
$\prod^{\A}\{-a: a\in \At\A\}=-(\sum^{\A} \At\A)=-1=0$. We claim that the non--principal type $\Gamma$ cannot be omitted
in any $m+3$--square model. Assume for contradiction that it can. Then there is a non--zero homomorphism 
$f:\Fm_T\to \wp(V)$ where $V=1^M$ and $M$ is an $m+3$--square
representation of $\Fm_T$,  such that $\bigcap_{\phi\in \Gamma}f(\phi_T)=\emptyset$. Since $\Fm_T$ is simple, then $f$ is an embedding.
Now $1^M= -\bigcap_{\phi\in \Gamma}f(\phi_T)=\bigcup_{\phi\in \Gamma}f(-\phi_T)=\bigcup _{x\in \At\A}f(x)$.
Thus $\bigcup_{x\in \At\A}f(x)=1^M=V$, so $M$ is an atomic, hence complete $m+3$--square representation of $\A$, 
which is impossible, and we are done.
\end{demo}

\begin{remark} Fix $2<m <\omega$. We proved that $\bold S{\sf Nr}_m\CA_{m+3}$ is not closed under \d\ completions, since $\Cm\At$ 
is the \d\ completion of the representable
algebra $\Tm\At$. 
Now the argument used above works {\it uniformly} for any ordinal $k\geq 3$  (possibly infinite), that is, 
for the variety $\bold S{\sf Nr}_m\CA_{m+k}$. For infinite $k$, by $m+k$ we mean ordinal addition, so 
that $m+k=k$.
The dimension $m+k$ is controlled by the number of greens $\sf num(g)$ that we start off with.  
One takes ${\sf num({\sf g})}=m+k-2$, so that $m+k=2+{\sf num(\sf g)}$. The number $2$ is the {\it increase in the number} 
from passing from {\it the number of `pairs of pebbles'} 
used in the private \ef\ forth game ${\sf EF}_{m+k-2}^{m+k-2}(m+k-2, m)$  to the {\it number of nodes} used in 
coloured graphs during the play lifted to the rainbow algebra $\D=\CA_{m+k-2, m}$. 
The last game is the graph version of $G_r^{m+k}(\At\D)$ (some $r\leq \omega)$.

In all cases \pa\  has a \ws\ in both games 
excluding an $m+k$--square representation of $\D$. If $k$ is finite, then $\D$ is finite and the number of rounds $r$ in both games is finite, that is, 
$m+k-2\leq r<\omega$. If $k$ is infinite, then $\sf num(g)=\omega$, $r=m+k-2=\omega$ and $\D=\CA_{\omega, m}$ is infinite. 
In both cases (finite and infinite), the rainbow algebra $\D$ embeds into the complex algebra of the atom structure obtained by {\it blowing up and bluring} $\At\D$, denoted above (when $k=3)$ 
by $\sf At$. The algebra $\D$ is {\it not blurred} in $\Cm\At$. It can be proved exactly like in \cite[Lemma 5.3]{Hodkinson} that $\Cm\At\cong \E$ via $X\mapsto \bigcup X$, 
where $\E$ is the {\it relativized 
non--representable set  algeba} with top element $W$ and universe $\{\phi^W: \phi\in L_{\infty,\omega}^n\}$ with $W$ as defned in the previous proof, 
$\phi$ is taken in the rainbow signature, and the operations defined the usual way like in cylindric set algebras relativized to $W$. 
The result in \cite{Hodkinson} is the special case when $k=\omega$. The embedding of $\D$ into $\Cm\At$ defined (using the notation in the above proof) 
via $M_a\mapsto \sum_j M_a^{(j)}$  does not work if the target algebra is $\Tm\At$, because 
$\Tm\At$ is not complete. Indeed, we do know that there can be {\it no embedding} from $\D$ into 
$\Tm\At$ because the latter is representable while the former is not; $\D$ {\it was blurred} in $\Tm\At$.  
\end{remark}

The following corollary follows immediately from the construction in theorem \ref{OTT}. It substantially strengthens the result in \cite{Hodkinson}.
\begin{corollary}\label{hod} Let $2<m<n\leq \omega$ and assume that $n\geq m+3$. Then the variety $\bold S{\sf Nr}_m\CA_n$, which is  the class 
of $\CA_m$s having $n$--flat representations, is not atom--canonical.
In particular, it is not closed under \d\ completions and, being conjugated, it is not Sahlqvist axiomatizable.
\end{corollary}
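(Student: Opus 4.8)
The plan is to read off the statement directly from the construction in Theorem~\ref{OTT} together with the Remark following it, so the work is bookkeeping rather than new mathematics. Fix $2<m<n\leq\omega$ with $n\geq m+3$ and put $k=n-m$, so $k\geq 3$ (and $k=\omega$ exactly when $n=\omega$). First I would invoke the blow--up--and--blur construction of Theorem~\ref{OTT}, run with $\mathsf{num}(\mathsf g)=m+k-2$ greens in place of $m+1$, as licensed by the Remark. This produces a $\CA_m$ atom structure $\At$, obtained by splitting each red atom of the rainbow algebra $\D=\CA_{m+k-2,m}$ into countably many copies, with two properties established there: (i) the term algebra $\Tm\At$ is representable; and (ii) $\D$ embeds into $\Cm\At$ via $\Theta\colon M_a\mapsto\sum_j M_a^{(j)}$, while \pa\ has a \ws\ in the graph version of the game $G^{m+k}_r(\At\D)$ (some $r\leq\omega$), obtained by lifting his \ws\ in the \ef\ forth game $\mathsf{EF}^{m+k-2}_{m+k-2}(m+k-2,m)$.

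Next I assemble the contradiction. Since $\Tm\At$ is generated by its atoms it is atomic, and being representable it lies in $\sf RCA_m=\bold S{\sf Nr}_m\CA_\omega\subseteq\bold S{\sf Nr}_m\CA_n$ (the inclusion because $\Rd_n\B\in\CA_n$ whenever $\B\in\CA_\omega$, for any $n\leq\omega$). On the other hand, $\Cm\At\notin\bold S{\sf Nr}_m\CA_n$: when $n$ is finite, $\D$ is finite and \pa's \ws\ in $G^{m+k}_r(\At\D)$ gives him a \ws\ in $F^{m+k}(\At\D)$, whence $\D\notin\bold S{\sf Nr}_m\CA_{m+k}$ by Lemma~\ref{flat}; when $n=\omega$, $\Cm\At\cong\E$ is the non--representable relativized set algebra of the Remark (this is Hodkinson's case), so $\Cm\At\notin\sf RCA_m=\bold S{\sf Nr}_m\CA_\omega$. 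In the finite case, since $\D$ embeds into $\Cm\At$ and $\bold S{\sf Nr}_m\CA_n$ is closed under subalgebras, $\Cm\At\notin\bold S{\sf Nr}_m\CA_n$ as well. Either way $\Tm\At$ is an atomic algebra in $\bold S{\sf Nr}_m\CA_n$ whose complex algebra $\Cm\At\,(=\Cm\At\Tm\At)$ is outside $\bold S{\sf Nr}_m\CA_n$, which is exactly the failure of atom--canonicity; by Lemma~\ref{neatsq} this variety is precisely the class of $\CA_m$s with $n$--flat representations.

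For the two remaining clauses: $\Tm\At$ is atomic and, being a $\CA_m$, completely additive, so its \d\ completion coincides with $\Cm\At\Tm\At=\Cm\At$; as the latter is not in $\bold S{\sf Nr}_m\CA_n$, the variety is not closed under \d\ completions. Finally $\bold S{\sf Nr}_m\CA_n$ is a conjugated variety (cylindrifiers are self--conjugate, and conjugatedness is inherited under $\bold S$ and ${\sf Nr}_m$), and a Sahlqvist--axiomatizable conjugated variety of $\mathsf{BAO}$s is closed under \d\ completions (the content behind \cite[Theorem~2.96]{book}); since $\bold S{\sf Nr}_m\CA_n$ is not closed under \d\ completions, it cannot be Sahlqvist axiomatizable.

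I do not expect a genuine obstacle, since this is a corollary; the only points needing care are (a) citing the Remark rather than the bare statement of Theorem~\ref{OTT}, so that the construction is carried out in the correct dimension $m+k$ and not merely $m+3$; (b) handling $n=\omega$ uniformly, where ``$n$--flat'' degenerates to ``representable'' and $\bold S{\sf Nr}_m\CA_\omega=\sf RCA_m$, so the statement specializes to the existence of a weakly but not strongly representable atom structure; and (c) making sure the \d\ completion of $\Tm\At$ is literally $\Cm\At$, which rests only on complete additivity of $\CA_m$.
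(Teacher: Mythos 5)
Your proof is correct and follows essentially the same route as the paper, which derives the corollary directly from the blow--up--and--blur rainbow construction of Theorem \ref{OTT}: $\Tm\At$ is an atomic representable (hence in the variety) algebra whose \d\ completion $\Cm\At$ contains the finite rainbow algebra that has no $m+3$--flat representation. The only cosmetic difference is that you re-run the construction at dimension $m+k$ for each $n$ via the Remark, whereas the paper's single construction at $m+3$ already covers every $n\geq m+3$ because $\bold S{\sf Nr}_m\CA_n\subseteq\bold S{\sf Nr}_m\CA_{m+3}$.
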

Next, we reprove theorem \ref{OTT} (for flatness) differently. We will use rainbows again, but we will be slightly more sketchy. 
Our construction here is inspired by the rainbow construction used 
for relation algebras in \cite{r} and the rainbow construction for cylindric algebras used in \cite{HH}.  But first a lemma.
\begin{lemma}\label{complete} Let $2<m<\omega$. 
For an atomic algebra 
$\A\in \CA_m$, $\A$ has a complete $n$--flat representation $\iff \A\in \bold S_c{\sf Nr}_m\CA_n.$
\end{lemma}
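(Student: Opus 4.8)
The plan is to prove the two implications separately, paralleling Lemmata~\ref{neatsq} and~\ref{square}; throughout, $f$ denotes the representing embedding, and we use that a complete representation is the same thing as an atomic one.

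For $(\Rightarrow)$, suppose $M$ is a complete $n$--flat representation of $\A$, with $f\colon\A\to\wp(V)$ injective, $V\subseteq{}^mM$ and $\bigcup_{s\in V}\rng s=M$. Since $\A$ is atomic and $f$ preserves existing meets, $f$ is an atomic representation: were some $s\in V$ in no $f(x)$ with $x\in\At\A$, then $s\in\bigcap_{x\in\At\A}f(-x)=f\bigl(\prod\{-x:x\in\At\A\}\bigr)=f\bigl(-\sum\At\A\bigr)=f(0)=\emptyset$. Now, exactly as in the proof of Lemma~\ref{square}, form the dilation $\D$ whose universe is $\{\phi^M:\phi\ \text{an}\ L_{\infty,\omega}\text{-formula over }\L(\A)^n\}$, where $\phi^M=\{\bar s\in{\sf C}^n(M):M,\bar s\models_c\phi\}$, with operations read off the clique--guarded semantics. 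Since $L_{\infty,\omega}$ is closed under arbitrary conjunctions and disjunctions, $\D$ is a complete Boolean algebra with operators; it is atomic, its atoms being the clique--guarded types realised in ${\sf C}^n(M)$; and, $M$ being $n$--flat, the cylindrifiers of $\D$ commute, so $\D\in\CA_n$. As in Lemma~\ref{neatsq}, $\theta\colon\A\to\D$, $a\mapsto a(\bar x)^M$, maps $\A$ isomorphically into $\Nr_m\D$. It remains to see that $\theta$ maps $\A$ onto a complete subalgebra of $\Nr_m\D$. Let $X\subseteq\A$ with $s=\sum^\A X$, and put $d=\sum^\D\theta[X]$, which exists since $\D$ is complete; clearly $d\le\theta(s)$. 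If $d\ne\theta(s)$, then, $\D$ being atomic, there is an atom of $\D$ below $\theta(s)-d$; pick $\bar t\in{\sf C}^n(M)$ in it, so $\bar t\in\theta(s)$ and $\bar t\notin d$. From $\bar t\in\theta(s)$ we get $(t_0,\dots,t_{m-1})\in f(s)\subseteq V$, so by atomicity of $f$ there is an atom $y\in\At\A$ with $(t_0,\dots,t_{m-1})\in f(y)$. Since $y$ is an atom and $y\le s=\sum X$, we have $y\le x$ for some $x\in X$; then $\bar t\in\theta(y)\le\theta(x)\le d$, contradicting $\bar t\notin d$. Hence $\theta$ witnesses $\A\in\bold S_c\Nr_m\CA_n$.

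For $(\Leftarrow)$, suppose $\A\in\bold S_c\Nr_m\CA_n$ is atomic, say $\A$ is a complete subalgebra of $\Nr_m\D$ with $\D\in\CA_n$. By Lemma~\ref{flat}, \pe\ has a \ws\ in $F^n(\At\A)$, the game on $n$ nodes and $\omega$ rounds in which \pa\ is allowed to re--use nodes. I would then run the step--by--step construction of the hard direction of Lemma~\ref{neatsq} (cf.\ \cite[Chapter~13]{book}): use this \ws\ to build a saturated family $H$ of $n$--dimensional hypernetworks for $\A$ --- networks on $\le n$ nodes whose $m$--tuples are labelled by atoms of $\A$ and which carry, in addition, hyperedge labels encoding the $\CA_n$--structure of the dilation --- and then glue the members of $H$ together in a back--and--forth construction meeting every cylindrifier defect, obtaining a relativized representation $M$ of $\A$ on ${\sf C}^n(M)$. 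Two things must be arranged. First, that $M$ is $n$--flat and not merely $n$--square: this is where the node--re--use feature of $F^n$ (as opposed to $G^n_\omega$, which by Lemma~\ref{square} only delivers $n$--squareness) is used, forcing the hyperlabels to be consistent with commutativity of all the cylindrifiers of the dilation, so that clique--guarded evaluation over $M$ agrees with evaluation in a genuine $\CA_n$. Second, that $M$ is complete: since every relevant $m$--tuple is eventually labelled by an atom of $\A$, one obtains $\bigcup_{x\in\At\A}f(x)={\sf C}^n(M)=1^M$, i.e.\ $f$ is an atomic, hence complete, representation; here it is precisely the hypothesis $\bold S_c$ (rather than merely $\bold S$, which by Lemma~\ref{neatsq} already suffices for $n$--flatness) that makes the hyperbasis rich enough for this atom--labelling to be maintained throughout.

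The routine, direct direction is $(\Rightarrow)$. The main obstacle is the step--by--step construction in $(\Leftarrow)$ and, within it, the two bookkeeping points just noted: arranging the hyperlabels so that the representation built is $n$--flat and not just $n$--square, and keeping every relevant tuple atom--labelled so that the representation is complete. Both are carried out by transcribing the relation--algebra arguments of \cite[Lemmata~13.33--13.36, Theorem~13.46]{book} to the $\CA_m$ setting, as in the proof of Lemma~\ref{neatsq}; verifying that the resulting $M$ is indeed an $n$--flat complete representation of $\A$ is the bulk of the work.
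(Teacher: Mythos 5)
Your proof takes essentially the same route as the paper, which itself only cites \cite[Theorem 13.45]{book} and says ``adapt Lemma \ref{neatsq} to the complete setting''. Your $(\Rightarrow)$ direction is correct and is in fact more careful than anything in the paper: the dilation $\D$ built from $L_{\infty,\omega}$ clique--guarded semantics is complete and atomic, and your argument that $\theta$ preserves arbitrary suprema (locating an atom of $\D$ below $\theta(s)-d$, pulling back to an atom $y$ of $\A$ via atomicity of $f$, and using that an atom below $\sum X$ lies below some member of $X$) is exactly right; this is where the hypothesis that the flat representation is \emph{complete} enters.

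The one substantive weak point is in $(\Leftarrow)$: you route the construction through \pe's \ws\ in $F^{n}(\At\A)$, obtained from Lemma \ref{flat}. But $F^{n}$ is an $\omega$--rounded game, and converting a \ws\ in an $\omega$--rounded game into a \emph{saturated} family of hypernetworks (and hence into a representation) requires that all defects can be scheduled in $\omega$ rounds, i.e.\ that $\A$ has countably many atoms. The lemma as stated carries no such restriction. The intended argument (and the one in \cite[Proposition 13.36]{book}) sidesteps the game entirely: from $\A\subseteq_{c}\Nr_m\D$ with $\A$ atomic one has $\sum^{\D}\At\A=1$, so an $n$--dimensional hyperbasis whose $m$--hyperedges are labelled by atoms of $\A$ can be extracted directly from $\D$, and the transfinite step--by--step gluing of its members then yields an atomic (hence complete) $n$--flat representation of any cardinality. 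Your description of the gluing and of why $\bold S_c$ forces atom--labelling is correct; only the source of the saturated family needs to be the dilation itself (or a game of transfinite length, as in the proof of theorem \ref{m}) rather than the $\omega$--rounded $F^{n}$.
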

\begin{proof}  \cite[Theorem 13.45]{book}. The proof is similar to the proof of lemma \ref{neatsq} except that 
now we consider {\it complete} $n$--flat representations and the operation $\bold S_c$ (of forming {\it complete} subalgebras) 
instead of $\bold S$ (forming subalgebras) \cite[Proposition 36]{book}. 
\end{proof}
We give a different proof to theorem \ref{OTT} in the case of flatness:
\begin{theorem}\label{OTT2} Let $2<m<\omega$. Then there exists an atomic algebra $\A\in {\sf RCA}_m$ having countably many atoms
such that $\A\notin \bold S_c{\sf Nr}_m\CA_{m+3}$,  but such that $\A$ is elementary equivalent to a countable completely
representable $\CA_m$.  Furthermore, $\A$ can be used to violate the omitting types theorem  
as formulated in theorem  \ref{OTT} with respect to $m+3$--flat semantics.
\end{theorem}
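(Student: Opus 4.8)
The proof will be a rainbow construction, arranged so that the two halves of the statement are witnessed by two \emph{elementarily equivalent} choices of the underlying combinatorial structures. Fix the dimension $m$ and keep the rainbow $\CA_m$ signature exactly as in the sketch of Theorem \ref{OTT} and \cite[Definition 3.6.9]{HHbook2}: greens built over a structure ${\sf G}$, reds over a structure ${\sf R}$, together with a shade of red $\rho$ lying \emph{outside} the signature but available to \pe\ when she builds models. The only freedom is the choice of ${\sf G}$ and ${\sf R}$, and I would take them elementarily equivalent but not partially isomorphic --- for instance ${\sf G}=\omega+\mathbb{Z}$ and ${\sf R}=\omega$ as linear orders --- chosen so that \pa\ has a winning strategy in the Ehrenfeucht--Fra\"iss\'e forth game ${\sf EF}^{m+1}_{\omega}({\sf G},{\sf R})$ ($m+1$ pairs of pebbles, $\omega$ rounds; in the example \pa\ walks a strictly descending $\omega$--chain down the $\mathbb{Z}$--tail of ${\sf G}$, which \pe\ cannot match by a descending chain in the well--order ${\sf R}$), while \pe\ wins ${\sf EF}^{p}_{k}({\sf G},{\sf R})$ for every finite $p$ and $k$ (which holds because ${\sf G}\equiv{\sf R}$). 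The point is that \pa's strategy is forced to re--use pebbles and needs infinitely many rounds, so it dies in any fixed finite number of rounds.

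Let $\A=\Tm\At$, where $\At$ is the resulting rainbow atom structure with the shade of red $\rho$ adjoined as in \cite{Hodkinson}. Then $\A$ is atomic with countably many atoms by construction, and $\A\in{\sf RCA}_m$: since \pa\ cannot win in finitely many rounds, \pe\ builds an $m$--homogeneous model $M$ by playing $\rho$ whenever she is forced a red, and $\Tm\At$ is then representable over the set $W$ of $m$--tuples of $M$ with no $\rho$--labelled edge, exactly as in \cite[Propositions 2.6--2.7, 3.13]{Hodkinson} and the proof of Theorem \ref{OTT}. On the other hand, lifting \pa's winning strategy in ${\sf EF}^{m+1}_{\omega}({\sf G},{\sf R})$ to the graph game by cone bombardment on a fixed face --- verbatim as in the proof of Theorem \ref{OTT}, \cite[pp.\,841--842]{HH} and \cite[Theorem 16.5]{book}, where the $\rho$--escape is unavailable to \pe\ because atoms carry no $\rho$--edge --- gives \pa\ a winning strategy in the atomic game $F^{m+3}(\At\A)$ (the increment of $2$ over the $m+1$ pebbles is the same cone--bookkeeping as in Theorem \ref{OTT}). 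Since $F^{m+3}$ is determined, \pe\ has no winning strategy in it, so $\A\notin\bold S_c{\sf Nr}_m\CA_{m+3}$ by Lemma \ref{flat}; in particular $\A$ is not completely representable.

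For the elementary equivalence, let ${\sf G}^{\ast}$ and ${\sf R}^{\ast}$ be the countable saturated models of $\mathrm{Th}({\sf G})$ and $\mathrm{Th}({\sf R})$, and let $\B$ be the rainbow $\CA_m$ built over $({\sf G}^{\ast},{\sf R}^{\ast})$ in the same way. Since ${\sf G}\equiv{\sf R}$, the two countable saturated models coincide up to isomorphism, ${\sf G}^{\ast}\cong{\sf R}^{\ast}$, so the identity is a back--and--forth system between them; hence \pe\ wins $G_{\omega}(\At\B)$, and therefore $\B$ is completely representable \cite{HH, HHbook2} (and countable, a rainbow atom structure over countable structures being countable). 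Finally $\A\equiv\B$, because the rainbow construction is \emph{smooth}: from an Ehrenfeucht--Fra\"iss\'e game on the two algebras one reads off Ehrenfeucht--Fra\"iss\'e games on ${\sf G},{\sf G}^{\ast}$ and on ${\sf R},{\sf R}^{\ast}$ via the correspondence of \cite[Theorem 16.5]{book}, and ${\sf G}\equiv{\sf G}^{\ast}$, ${\sf R}\equiv{\sf R}^{\ast}$. Alternatively one may take $\B$ to be a non--principal ultrapower $\prod_U\A$, which is $\aleph_1$--saturated and satisfies, for each finite $k$, the first order sentence expressing ``\pe\ wins $G_k(\At-)$'', so that \pe\ wins $G_{\omega}(\At\prod_U\A)$ and $\prod_U\A$ is completely representable; passing from there to a countable completely representable algebra is the less transparent step, which the saturated--model route sidesteps. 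This yields the first three assertions.

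The last assertion is copied from the proof of Theorem \ref{OTT}, with ``$m+3$--square'' replaced by ``$m+3$--flat'' and Lemma \ref{square} by Lemma \ref{complete}: identify $\A$ with $\Fm_T$ for a countable, complete, atomic theory $T$ in $m$ variables, let $\Gamma=\{\neg\phi:\phi_T\in\At\Fm_T\}$ be the non--principal type of co--atoms, and observe that an $m+3$--flat model of $T$ omitting $\Gamma$ forces the representation to be atomic, hence complete, hence a complete $m+3$--flat representation of $\A$, so $\A\in\bold S_c{\sf Nr}_m\CA_{m+3}$ by Lemma \ref{complete}, contradicting the previous paragraph. \emph{Main obstacle.} The delicate point is the simultaneous calibration of ${\sf G}$ and ${\sf R}$: they must be elementarily equivalent (so that the completely representable companion $\B$ lives over their common countable saturated model) while the mismatch between them is exploitable by \pa\ in $F^{m+3}$ using only $m+3$ re--usable nodes and infinitely many rounds, yet is invisible both in every finite round game and in the $\rho$--homogeneous model construction securing $\A\in{\sf RCA}_m$. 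Pinning \pa's cone--bombardment strategy down to $m+3$ nodes over $\omega$ rounds without destroying the weak representability of $\Tm\At$, and verifying the smoothness of the rainbow functor under $\equiv$ at the level of Ehrenfeucht--Fra\"iss\'e games, is where the real work lies; everything else follows the template already laid out for Theorem \ref{OTT}.
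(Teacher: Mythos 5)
Your outline has the right overall shape --- a rainbow construction over two order structures, \pa\ winning the node--reusing game $F^{m+3}$ while \pe\ wins every finite--round game, and then a transfer to a countable completely representable elementarily equivalent companion --- but as written it has a genuine gap at its centre. You say the ``only freedom is the choice of ${\sf G}$ and ${\sf R}$'' and that the forbidden triples are kept ``exactly as in the sketch of Theorem \ref{OTT}''. Those forbidden triples ($(\g,\g',\g^*)$, $(\g_i,\g_i,\w_i)$, $(\g_0^j,\g_0^k,\w_0)$ and the red index--matching condition) depend only on the underlying \emph{sets} of ${\sf G}$ and ${\sf R}$, not on their order. With both index sets infinite, \pa's cone bombardment never exhausts the reds and he has no winning strategy in $F^{m+3}$ at all; your \ef\ analysis of the linear orders $\omega+\Z$ versus $\omega$ simply does not propagate into the atomic game. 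The indispensable new ingredient is an extra forbidden triple coupling two greens to a red, namely that $(\g_0^i,\g_0^j,\r_{kl})$ is forbidden unless $\{(i,k),(j,l)\}$ is an order--preserving partial function from the green structure to the red one. Only with this clause does \pa's descending chain of green tints force \pe\ into a descending chain of red indices in a well--order (giving his win in $F^{m+3}$ with node re--use), and only with it does \pe's finite--round strategy require the nontrivial bookkeeping of a widely spaced order--preserving partial map, which is where the real work of the proof lies.

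Two further steps would fail as stated. First, the representability of $\A=\Tm\At$ via Hodkinson's $\rho$--model is misapplied: that argument needs the reds to be blown up into infinitely many copies so that the back--and--forth systems $\Theta^{\chi}$ (permuting red superscripts and $\rho$) exist and so that the term algebra cannot see the offending joins; here there is no blow--up, so the $m$--homogeneous model and the passage to the set $W$ of $\rho$--free tuples are unavailable. (Representability of $\A$ in fact comes for free at the end: once $\A\equiv\B$ with $\B$ completely representable, $\A\in{\sf RCA}_m$ because ${\sf RCA}_m$ is a variety, hence elementary.) Second, the elementary equivalence $\A\equiv\B$ cannot be obtained by replacing $({\sf G},{\sf R})$ with a pair of isomorphic saturated models and invoking ``smoothness of the rainbow functor under $\equiv$'': elementary equivalence of the index structures is not known to imply elementary equivalence of the resulting rainbow algebras, and the correspondence with \ef\ games only controls \pe's winning strategies in the bounded atomic games, not the full first--order theory of the algebra. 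The correct route --- and the one you gesture at but do not carry out --- is to exhibit \pe's winning strategy in $G_k(\At\A)$ for every finite $k$ directly on $\A$ itself (this is exactly what the order--preserving, $3^{k-r}$--spaced partial maps accomplish), and then apply the ultrapower--and--elementary--chain argument of \cite[Theorem 3.3.5]{HHbook2} together with \cite[Theorem 3.3.3]{HHbook2} to produce the countable completely representable $\B\equiv\A$. Your concluding derivation of the failure of the omitting types theorem from Lemma \ref{complete} is fine once the above is repaired.
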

\begin{proof} 
Fix finite $m>2$. Let $\A$ be the $m$--dimensional
rainbow cylindric algebra $R(\Gamma)$  \cite[Definition 3.6.9]{HHbook2}
where $\Gamma=\omega$, so that the reds ${\sf R}$ is the set $\{\r_{ij}: i<j<\omega\}$ and the greens
constitute the set ${\sf G}=\{\g_i:1\leq i <m-1\}\cup \{\g_0^i: i\in \Z\}$. In complete coloured graphs the forbidden triples are like 
in usual rainbow constructions \cite{HH} (as described above), 
but now we impose a new
forbidden triple in coloured graphs connecting two greens and one red. We stipulate that 
the triple  $(\g^i_0, \g^j_0, \r_{kl})$ is forbidden if $\{(i, k), (j, l)\}$ is not an order preserving partial function from
$\Z\to\N$. Here we identify $\omega$ with $\N$. 
The $m$--dimensional complex algebra of this atom structure, which we denote by $\A$ is based 
on the two ordered structure $\Z$ (greens) and $\N$ (reds).

In the present context the newly added forbidden triple 
makes it harder for \pe\ to win the  
game $G_k(\At\A)$ $(k\in \omega$) but not impossible.  
Indeed, it can  (and will) be shown that \pe\ has a \ws\ in $G_k(\At\A)$ for all $k\in \omega$.
Hence, using ultrapowers and an elementary chain argument  \cite[Theorem 3.3.5]{HHbook2}, one gets a countable algebra $\B$ 
such that $\B\equiv \A$, and  \pe\ has a \ws\ in $G_{\omega}(\At\B)$. Then $\B$, being countable, 
is completely representable by \cite[Theorem 3.3.3]{HHbook2}. 
On the other hand, we will show that \pa\ has a \ws\ in $F^{m+3}(\At\A)$, implying by lemma \ref{flat} 
that $\A\notin \bold S_c{\sf Nr}_m\CA_{m+3}$.

{\bf \pe's strategy in $G_k(\At\A)$ where $0<k<\omega$ is the number of rounds:}  
Let $0<k<\omega$. We proceed inductively. Let $M_0, M_1,\ldots, M_r$, $r<k$ be the coloured graphs at the start of a play of $G_k$ just before round $r+1$.
Assume inductively, that \pe\ computes a partial function $\rho_s:\Z\to \N$, for $s\leq r:$

\begin{enumroman}
\item $\rho_0\subseteq \ldots \rho_t\subseteq\ldots\subseteq\ldots  \rho_s$ is (strict) order preserving; if $i<j\in \dom\rho_s$ then $\rho_s(i)-\rho_s(j)\geq  3^{k-r}$, where $k-r$
is the number of rounds remaining in the game,
and 
$$\dom(\rho_s)=\{i\in \Z: \exists t\leq s, \text { $M_t$ contains an $i$--cone as a subgraph}\},$$

\item for $u,v,x_0\in \nodes(M_s)$, if $M_s(u,v)=\r_{\mu,k}$, $\mu, k\in \N$, $M_s(x_0,u)=\g_0^i$, $M_s(x_0,v)=\g_0^j$,
where $i,j\in \Z$ are tints of two cones, with base $F$ such that $x_0$ is the first element in $F$ under the induced linear order,
then $\rho_s(i)=\mu$ and $\rho_s(j)=k$.
\end{enumroman} 
For the base of the induction \pe\ takes $M_0=\rho_0=\emptyset.$ 
Assume that $M_r$, $r<k$  ($k$ the number of rounds) is the current coloured graph and that \pe\ has constructed $\rho_r:\Z\to \N$ to be a finite order preserving partial map
such conditions (i) and (ii) hold. We show that (i) and (ii) can be maintained in a 
further round.
We check the most difficult case. Assume that $\beta\in \nodes(M_r)$, $\delta\notin \nodes(M_r)$ is chosen by \pa\ in his cylindrifier move,
such that $\beta$ and $\delta$ are apprexes of two cones having
same base and green tints $p\neq  q\in \Z$. 
Now \pe\ adds $q$ to $\dom(\rho_r)$ forming $\rho_{r+1}$ by defining the value $\rho_{r+1}(p)\in \N$ 
in such a way to preserve the (natural) order on $\dom(\rho_r)\cup \{q\}$, that is maintaining property (i).
Inductively, $\rho_r$ is order preserving and `widely spaced' meaning that the gap between its elements is
at least $3^{k-r}$, so this can be maintained in a further round.

Now \pe\  has to define a (complete) coloured graph 
$M_{r+1}$ such that $\nodes(M_{r+1})=\nodes(M_r)\cup \{\delta\}.$ 
In particular, she has to find a suitable 
red label for the edge $(\beta, \delta).$
Having $\rho_{r+1}$ at hand she proceeds as follows. Now that $p, q\in \dom(\rho_{r+1})$, 
she lets $\mu=\rho_{r+1}(p)$, $b=\rho_{r+1}(q)$. The red label she chooses for the edge $(\beta, \delta)$ is: (*)\ \  $M_{r+1}(\beta, \delta)=\r_{\mu,b}$.
This way she maintains property (ii) for $\rho_{r+1}.$  Next we show that this is a \ws\ for \pe. 

{\bf Checking that \pe's strategy is a winning one:}  We check consistency of newly created triangles proving that $M_{r+1}$ is a coloured graph completing the induction. 
Since $\rho_{r+1}$ is chosen to preserve order, no new forbidden triple (involving two greens and one red) will be created.
Now we check red triangles only of the form $(\beta, y, \delta)$ in $M_{r+1}$ $(y\in \nodes(M_r)$). 
We can assume that  $y$ is the apex of a cone with base $F$ in $M_r$ and green tint $t$, say,
and that $\beta$ is the appex of the $p$--cone having the same base. 
Then inductively by condition (ii), taking $x_0$ to be the first element of $F$, and taking  
the nodes $\beta, y$, and the tints $p, t$, for $u, v, i, j$,  respectively, we have by observing that 
$\beta, y\in \nodes(M_r)$, $\beta, y\in \dom(\rho_r)$ and $\rho_r\subseteq \rho_{r+1}$, 
the following:  
$M_{r+1}(\beta,y)=M_{r}(\beta, y)=\r_{\rho_{r}(p), \rho_{r}(t)}=r_{\rho_{r+1}(p), \rho_{r+1}(t)}.$
By  her strategy, we have  $M_{r+1}(y,\delta)=\r_{\rho_{r+1}(t), \rho_{r+1}(q)}$ 
and we know by (*) that $M_{r+1}(\beta, \delta)=\r_{\rho_{r+1}(p), \rho_{r+1}(q)}$. 
The triple $(\r_{\rho_{r+1}(p), \rho_{r+1}(t)}, \r_{\rho_{r+1}(t), \rho_{r+1}(q)}, \r_{\rho_{r+1}(p), \rho_{r+1}(q)})$
of reds is consistent (witness forbidden triples of reds right before definition \ref{cone}) and we are done with this case. 
All other edge labelling and colouring $m-1$ tuples in $M_{r+1}$ 
by yellow shades are  exactly like in \cite{HH}.

{\bf  \pa\ can win the $\omega$-rounded game $F^{m+3}(\At\A)$:}  The idea here is that the newly added triple
 forces \pe\ to play reds $\r_{ij}$ with one of the indices forming a decreasing sequence in $\N$
in response to \pa\ playing cones having a common 
base and distinct green tints (demanding a red label for edges between 
appexes of two succesive cones.) Having the option to reuse 
the $m+3$ nodes is crucial for \pa\ to implement his \ws\ because he uses {\it finitely many} nodes to win an {\it infinite} $\omega$--rounded game.
The argument used is essentially the $\CA$ analogue of \cite[Theorem 33, Lemma 41]{r}.

In the initial round \pa\ plays a graph $M$ with nodes $0,1,\ldots, m-1$ such that $M(i,j)=\w_0$
for $i<j<m-1$
and $M(i, m-1)=\g_i$
$(i=1, \ldots, m-2)$, $M(0, m-1)=\g_0^0$ and $M(0,1,\ldots, m-2)=\y_{\Z}$. This is a $0$ cone.
In the following move \pa\ chooses the base  of the cone $(0,\ldots, m-2)$ and demands a node $m$
with $M_2(i,m)=\g_i$ $(i=1,\ldots, m-2)$, and $M_2(0,m)=\g_0^{-1}.$
\pe\ must choose a label for the edge $(m+1,m)$ of $M_2$. It must be a red atom $r_{nk}$, $n, k\in \N$. Since $-1<0$, then by the `order preserving' condition 
we have $n<k$.
In the next move \pa\ plays the face $(0, \ldots, m-2)$ and demands a node $m+1$, with $M_3(i,m)=\g_i$ $(i=1,\ldots, m-2)$,
such that  $M_3(0, m+2)=\g_0^{-2}$.
Then $M_3(m+1,m)$ and $M_3(m+1, m-1)$ both being red, the indices must match, so $M_3(m+1,m)=r_{lk}$ and $M_3(m+1, m-1)=r_{kn}$ with $l<n\in \N$.
In the next round \pa\ plays $(0,1,\ldots m-2)$ and re-uses the node $2$ such that $M_4(0,2)=\g_0^{-3}$.
This time we have $M_4(m,m-1)=\r_{jl}$ for some $j<l<n\in \N$.
Continuing in this manner leads to a decreasing 
sequence in $\N$. Now that \pa\ has a \ws\ in $F^{m+3}$, 
by lemma \ref{flat}, $\A\notin \bold S_c{\sf Nr}_m\CA_{m+3}$.

{\bf The non--principal type of co-atoms of $\Tm\At\A$ cannot be omitted in an $m+3$--flat model:} 
Since $\A$ has no complete $m+3$--flat representation, then the algebra   $\C=\Tm\At\A$
has no complete $m+3$--flat representation because $\At\C=\At\A$. Furthermore, $\C$ is countable
since it is generated by the countable set $\At\C$.
Assume that $\C=\Fm_T$ for some countable $L_m$ theory $T$. 
Then using exactly the same argument in the last paragraph of the proof of theorem \ref{OTT} 
replacing `square'; by `flat' we get that the type consisting of co--atoms of $T$, namely, 
$\Gamma=\{\neg \phi: \phi_T\in \At\C\}$ cannot be omitted in an $m+3$--flat model.
\end{proof}
Let $\bold S_d$ be the operation of forming {\it dense subalgebras}.  Then for any class 
$\bold K$ having a Boolean reduct $\bold S_d\bold K\subseteq \bold S_c\bold K$. For Boolean algebras the inclusion is proper.
Let ${\sf CRCA}_m$ denote the class of completely representable $\CA_m$s. The following corollary generalizes the result in\cite{HH}. 
\begin{corollary}\label{c} For any $2<m<n<\omega$, with $n\geq m+3$ the class of 
algebras in $\CA_m$ having  complete $n$--flat representations, and the class ${\sf CRCA}_m$ are not elementary. Furthermore, for any class $\sf K$, such that 
$\bold S_c{\sf Nr}_m\CA_{\omega}\cap {\sf CRCA}_{m}\subseteq {\sf K}\subseteq \bold S_c{\sf Nr}_m\CA_{m+3}$, $\sf K$ is not elementary. 
We  can replace the first $\bold S_c$ by $\bold S_d$. 
\end{corollary}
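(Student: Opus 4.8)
The plan is to reuse the single pair of algebras $\A,\B$ delivered by Theorem \ref{OTT2} and to let it witness the failure of elementariness for all the classes in the statement at once. Recall that Theorem \ref{OTT2} produces an atomic $\A\in {\sf RCA}_m$ with countably many atoms such that $\A\notin \bold S_c{\sf Nr}_m\CA_{m+3}$, together with a countable, completely representable $\B$ satisfying $\A\equiv \B$. The claim I would prove is that $\B$ belongs to the lower bound $\bold S_c{\sf Nr}_m\CA_{\omega}\cap {\sf CRCA}_m$ (and to each of the two explicitly named classes), while $\A$ belongs to none of them. Since $\A\equiv \B$, every class caught between the stated bounds — in particular ${\sf CRCA}_m$ and the class of $\CA_m$s having complete $n$--flat representations — then fails to be closed under elementary equivalence, hence is not elementary.

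First I would nail down the two inclusions. For $\B$: being countable it has countably many atoms, so by the convention preceding Lemma \ref{neatsq} (identifying $\omega$--flat with ordinary representations) a complete representation of $\B$ is a complete $\omega$--flat representation, whence $\B\in \bold S_c{\sf Nr}_m\CA_{\omega}$ by Lemma \ref{complete} applied at $n=\omega$. Using the monotonicity ${\sf Nr}_m\CA_{\omega}\subseteq {\sf Nr}_m\CA_k$ for $m<k\le\omega$ — which holds because ${\sf Nr}_m\D={\sf Nr}_m({\sf Nr}_k\D)$ for $\D\in \CA_{\omega}$ — this upgrades to $\B\in \bold S_c{\sf Nr}_m\CA_k$ for every such $k$, so by Lemma \ref{complete} again $\B$ has a complete $n$--flat representation for every finite $n$, and of course $\B\in {\sf CRCA}_m$. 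For $\A$: Theorem \ref{OTT2} gives $\A\notin \bold S_c{\sf Nr}_m\CA_{m+3}$, and $\bold S_c{\sf Nr}_m\CA_k\subseteq \bold S_c{\sf Nr}_m\CA_{m+3}$ whenever $k\ge m+3$, so $\A\notin \bold S_c{\sf Nr}_m\CA_k$ for every $k\ge m+3$; by Lemma \ref{complete} (and atomicity of $\A$) this says $\A$ has no complete $n$--flat representation for $n\ge m+3$, and in particular $\A\notin {\sf CRCA}_m$, since a complete representation of $\A$ would be a complete $\omega$--flat one, placing $\A$ inside $\bold S_c{\sf Nr}_m\CA_{\omega}\subseteq \bold S_c{\sf Nr}_m\CA_{m+3}$.

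With these facts the named classes are done, and for the sandwich I would argue: given ${\sf K}$ with $\bold S_c{\sf Nr}_m\CA_{\omega}\cap {\sf CRCA}_m\subseteq {\sf K}\subseteq \bold S_c{\sf Nr}_m\CA_{m+3}$, we have $\B\in {\sf K}$ from the lower bound and $\A\notin {\sf K}$ from the upper bound, while $\A\equiv \B$. For the refinement replacing the first $\bold S_c$ by $\bold S_d$ it suffices, since $\bold S_d\subseteq \bold S_c$ always, to verify $\B\in \bold S_d{\sf Nr}_m\CA_{\omega}$; the rest of the sandwich argument repeats verbatim. Here I would use that $\B$ is atomic and a complete subalgebra of some ${\sf Nr}_m\D$ with $\D\in \CA_{\omega}$, and observe that an atomic complete subalgebra is automatically dense: if a nonzero $b\in {\sf Nr}_m\D$ were disjoint from every atom of $\B$, then each such atom would lie below $-b$, forcing $1=\sum\At\B\le -b$ (the supremum being computed correctly in ${\sf Nr}_m\D$ by completeness of the inclusion and atomicity of $\B$), hence $b=0$, a contradiction; so some atom of $\B$ sits below $b$, and $\B$ is dense in ${\sf Nr}_m\D$.

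I do not expect a real obstacle: this corollary is essentially a corollary of Theorem \ref{OTT2}. The only steps calling for a genuine, if small, argument are the neat-reduct monotonicity ${\sf Nr}_m\CA_{\omega}\subseteq {\sf Nr}_m\CA_k$, which is exactly what makes a single separating pair $(\A,\B)$ work uniformly for every admissible $n$ and every $\sf K$ in the sandwich, and the ``atomic complete subalgebra is dense'' observation, which covers the $\bold S_d$ clause. Whatever subtlety remains is bookkeeping: checking that Lemma \ref{complete} and the $\omega$--flat convention are legitimately invoked at $n=\omega$, and reading ``class of $\CA_m$s with complete $n$--flat representations'' (as one must) as consisting only of atomic algebras, so that Lemma \ref{complete} identifies it with $\bold S_c{\sf Nr}_m\CA_n$ restricted to atomic algebras.
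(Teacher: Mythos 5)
Your treatment of the main claim is correct and follows essentially the paper's route: the pair $(\A,\B)$ from Theorem \ref{OTT2} together with Lemma \ref{complete} (equivalently, the cited fact that ${\sf CRCA}_m$ and $\bold S_c{\sf Nr}_m\CA_{\omega}$ coincide on atomic algebras with countably many atoms) gives $\B$ in the lower bound, $\A$ outside the upper bound, and $\A\equiv\B$, which kills every class in the sandwich, including the two named ones. The bookkeeping you add (neat--reduct monotonicity, the $\omega$--flat convention) is fine.

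The $\bold S_d$ clause, however, has a genuine gap. Your key step is the claim that an atomic complete subalgebra is automatically dense: from $b\cdot x=0$ for every atom $x$ of $\B$ you correctly deduce $b=0$, but from $b\cdot x\neq 0$ for \emph{some} atom $x$ of $\B$ you cannot conclude $x\leq b$, because an atom of $\B$ need not be an atom of the ambient algebra ${\sf Nr}_m\D$. Density requires a nonzero element of $\B$ \emph{below} $b$, not merely one meeting $b$. A minimal counterexample already at the Boolean level: the two--element algebra $\{0,1\}$ is an atomic complete subalgebra of $\wp(\{0,1\})$ (every supremum that exists in it is preserved), yet it is not dense, since no nonzero element of it lies below a singleton. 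So $\bold S_d\K\subsetneq\bold S_c\K$ in general, and your argument does not place $\B$ in $\bold S_d{\sf Nr}_m\CA_{\omega}$. The paper closes this gap by a genuinely stronger construction: one plays a richer game $H_k$ (with amalgamation--type moves beyond the cylindrifier moves of $G_k$) on the rainbow atom structure, shows \pe\ wins $H_k(\At\A)$ for all finite $k$, and then the ultrapower/elementary--chain argument produces a countable $\B\equiv\A$ whose \d\ completion $\Cm\At\B$ lies in ${\sf Nr}_m\CA_{\omega}$; since $\B$ is dense in $\Cm\At\B$ by the very definition of the \d\ completion, this yields $\B\in\bold S_d{\sf Nr}_m\CA_{\omega}$. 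That extra game--theoretic input is what your proposal is missing for the last sentence of the corollary.
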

\begin{proof} 
By lemma \ref{complete} and the previous proof, upon noting that
the two classes $\sf CRCA_m$ and $\bold S_c{\sf Nr}_m\CA_{\omega}$ coincide on atomic algebras having countably many atoms \cite[Theorem 5.3.6]{Sayedneat}, 
we get the required result 
without the last statement. 
For this last statement, we give a sketch of proof. One can define a $k$ rounded game $H_k$, $k\leq \omega$ that is stronger than $G_k$ (in the sense that for any atomic algebra $\C$,
\pe\ has a \ws\ in $H_k(\At\C)\implies$ \pe\ has a \ws in $G_k(\At\C$)), such that if 
\pe\ has a \ws\ in $H_{\omega}(\At\B)$ where $\B$ is a countable atomic $\CA_m$, then $\B$ is not only completely representable (by \pe's \ws\ in $G_{\omega}$ implied by her \ws\ in $H_{\omega}$), 
but  using the stronger part of the game involving other moves, \pe\ can arrange that  $\B$ satsfies that $\At\B\in \At{\sf Nr}_m\CA_{\omega}$ and 
its \d\ completion $\Cm\At\B$ is in ${\sf Nr}_m\CA_{\omega}$. (The last two conditions taken together do not imply that $\B$ itself is in ${\sf Nr}_m\CA_{\omega}$ \cite{conference}).
It can be shown that \pe\ has a \ws\ in $H_k(\At\A)$, for all $k<\omega$, where
$\A$ is the rainbow algebra based on $\Z$ and $\N$ used in the previous proof. Thus using ultrapowers together with an elementary chain argument,
we get that $\A\equiv \B$, with $\B$ having the above three properties. This gives the stronger result that any
$\sf K$ between $\bold S_d{\sf Nr}_m\CA_{\omega}(\subseteq \bold S_c{\sf Nr}_m\CA_{\omega}$) and $\bold S_c{\sf Nr}_m\CA_{m+3}$, 
is not elementary, since $\B$ is dense in its \d\ completion $\Cm\At\B.$
\end{proof}
Fix $2<n<\omega$. It is known that ${\sf Nr}_n\CA_{\omega}\subsetneq \bold S_d{\sf Nr}_n\CA_{\omega}$\cite{conference}. 
We do not know whether we can further remove $\bold S_d$ proving that any class beween ${\sf Nr}_n\CA_{\omega}\cap {\sf CRCA}_n$ and ${\bold S}_c{\sf Nr}_n\CA_{n+3}$ 
is not elementary.
\subsection{Finitizability via guarding and relativized representations}
 
Throughout this subsection, unless otherwise indicated, $n$ is a finite ordinal $>1$.
Here we study 
{\it globally guarded, or simply guarded} 
fragments of $L_n$ (first order logic restricted to the first $n$ variables.)
The following theorem is known \cite{v}. It relates the semantics of a formula $\phi$ 
in a `generalized model' to the semantics of its {\it guarded version}, denoted by $\sf guard(\phi)$,  in the standard part of the model expanded with the guard. 
\begin{theorem} 
Let $L$ be a signature taken in $L_n$. 
Let $(M, V)$ be a generalized model in $L$,
that  is, $M$ is a first order $L$--structure and $V\subseteq {}^nM$ is the set of {\it admissible assignments}. Assume that $R$ is an $n$-ary
relation symbol outside $L$. For $\phi$ in $L$,  let $\sf guard(\phi)$ be the formula obtained from $\phi$ by relativizing all quantifiers
to one and the same atomic formula $R(\bar{x})$ and let 
${\sf Guard}(M,V)$ be the model expanding $M$ to $L\cup \{R\}$ by
interpreting $R$ via $R(s)\iff s\in V$. 
Then the following holds:  
 $$M, V, s\models \phi\Longleftrightarrow {\sf Guard} (M, V),s\models {\sf guard}(\phi),$$
where $s\in V$ and $\phi$ is a formula.
\end{theorem}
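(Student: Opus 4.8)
The plan is to prove the equivalence by a routine induction on the structure of the formula $\phi$, carried out simultaneously for all admissible assignments $s\in V$. The key observation that drives the whole argument is that the relativizing atom $R(\bar x)$, with $\bar x=(x_0,\ldots,x_{n-1})$, has been chosen so that its interpretation in ${\sf Guard}(M,V)$ is exactly $V$; thus, on the right-hand side an ordinary Tarskian quantifier — which by itself ranges over all of ${}^nM$ — is cut down by the guard $R(\bar x)$ to range over precisely the admissible witnesses, which is exactly what the relativized quantifier of the generalized semantics does on the left.

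First I would dispose of the base case: if $\phi$ is atomic it lies in the signature $L$ and contains no quantifiers, so ${\sf guard}(\phi)=\phi$, and since ${\sf Guard}(M,V)$ agrees with $M$ on every symbol of $L$, both sides of the biconditional reduce to the same classical statement $M\models\phi[s]$. The Boolean cases then follow immediately from the induction hypothesis, using that ${\sf guard}$ commutes with $\neg$ and $\wedge$ (hence with $\vee$), so that ${\sf guard}(\neg\psi)=\neg{\sf guard}(\psi)$ and similarly for the other connectives, and that negation and conjunction are interpreted classically in both semantics.

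The real content is the quantifier step, which I would handle for $\phi=\exists x_i\psi$ only, treating $\forall$ as the abbreviation $\neg\exists\neg$; here ${\sf guard}(\phi)=\exists x_i\big(R(\bar x)\wedge{\sf guard}(\psi)\big)$. For the forward direction, from $M,V,s\models\exists x_i\psi$ I obtain $t\in V$ with $t\equiv_i s$ and $M,V,t\models\psi$; the induction hypothesis applied at $t$ (legitimate precisely because $t\in V$) gives ${\sf Guard}(M,V),t\models{\sf guard}(\psi)$, while $t\in V$ gives ${\sf Guard}(M,V),t\models R(\bar x)$, so $t$ is a Tarskian witness for the guarded existential and ${\sf Guard}(M,V),s\models{\sf guard}(\phi)$. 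For the converse, any Tarskian witness $t\equiv_i s$ with ${\sf Guard}(M,V),t\models R(\bar x)\wedge{\sf guard}(\psi)$ must satisfy $t\in V$ by the definition of $R$, whence the induction hypothesis applies at $t$ and yields $M,V,t\models\psi$; since $t\in V$ and $t\equiv_i s$, this gives $M,V,s\models\exists x_i\psi$.

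I do not expect a genuine obstacle: the statement is essentially folklore (a relativization/guarding lemma, here with the guard chosen to name $V$), and the only point demanding attention is to keep careful track of which semantics — generalized or ordinary Tarskian — is in force on each side, and to invoke the induction hypothesis only at tuples that have been certified to lie in $V$ by means of the guard $R(\bar x)$. Once the induction hypothesis is formulated as a biconditional quantified over all $s\in V$, these certifications come for free from the clause for $R$, and the induction closes, completing the proof.
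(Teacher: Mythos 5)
Your proof is correct. The paper gives no proof of this statement at all—it is cited as a known relativization lemma from van Benthem's work—and your induction on the structure of $\phi$, with the guard $R(\bar{x})$ certifying membership in $V$ at exactly the quantifier step, is precisely the standard argument that the citation refers to.
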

We will shortly discover that our finitizabitity result 
is in fact an infinite 
analogue of the polyadic equality analogue of the classical Andr\'eka--Thompson--Resek theorem \cite{AT} proved by 
Ferenczi \cite{Fer}. The algebras studied in the last two references are the modal algebras of two `guarded fragments' of 
$L_n$  where  in the generalized models the  (admissable) assignments are restricted to so--called {\it diagonizable and locally square} subsets 
of $^nM$, respectively, to be defined in a moment.
Let us start with a precise algebraic formulation of the finitizability problem for finite dimensions, due to Maddux,  N\'emeti \cite{M, Nemeti} and others:

{\bf Let $n$ be a finite ordinal $>2$. Can we expand the signature of $\sf RCA_n$
by finitely many permutation invariant operations
so that  the resulting new variety of set algebras, namely, the variety of representable
algebras of dimension $n$ in this new signature, is finitely axiomatizable?}

Here  permutation invariance 
is a necessary condition if we want isomorphic models to satisfy the same formulas,
a basic requirement in abstract model theory.  
Tarski called such operations {\it logical} \cite{Sain}. 
The substitution operations ${\sf s}_{\tau}$ with $\tau\in \T$ ($\T$ a rich semigroup) are permutation invariant. 
The notion of permutation invariance is discussed at length in \cite{Sain, SG, Bulletin} and it tends to keep the problem on the tough side.
But via relativization (without the need to expand the signature) the following theorem can be proved. But first a definition.

\begin{definition}\label{de}  Let $\alpha$ be any ordinal. A set $V\subseteq {}^\alpha U$ {\it diagonalizable} if whenever 
$s\in V$ and $i<j<\alpha$, then $s\circ [i|j]\in V$.
$V$ is {\it locally square} if whenever $s\in V$ and $\tau: n\to n$, then $s\circ \tau\in V$.
\end{definition}
Unions of cartesian spaces and weak cartesian spaces are locally square. 
In particular, disjoint such unions are locally square. 
The part dealing with finite axiomatizability in the next theorem is nothing more than the celebrated Andr\'eka--Resek--Thompson result \cite{AT}
and its polyadic--equality analogue due to Ferenczi \cite{Fer,f}. 
Decidability is proved in \cite{ans}.

Recall that $\mathfrak{B}(V)$ is the Boolean algebra $(\wp(V), \cup, \cap, \sim, \emptyset, V)$.
We denote the the class of set algebras of the form
$(\mathfrak{B}(V), {\sf c}_i, {\sf d}_{ij})_{i, j<n}$ where $V$ is diagonalizable by ${\sf D}_n$ and that consisting of algebras of the form 
$(\mathfrak{B}(V), {\sf c}_i, {\sf d}_{ij}, {\sf s}_{[i,j]})_{i, j<n}$ where $V$ is locally square by ${\sf G}_n$.
\begin{theorem}\label{m}
\cite{AT, f, Fer, ans}.  
Fix  $n>1$. Then  ${\sf D}_n$ and ${\sf G}_n$ are varieties that are axiomatizable by a finite schemata. 
In case $n<\omega$, both varieties are finitely axiomatizable and have a decidable universal (hence equational) theory.
\end{theorem}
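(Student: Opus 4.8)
I would establish the three claims in turn. That ${\sf D}_n$ and ${\sf G}_n$ are varieties axiomatizable by a finite schema of equations is imported from the Andr\'eka--Resek--Thompson representation theorem and its polyadic--equality refinement due to Ferenczi \cite{AT, f, Fer}: these furnish explicit finite Halmos schemata $\Sigma_{\sf D}$ and $\Sigma_{\sf G}$, in the signatures of ${\sf D}_n$ and ${\sf G}_n$ respectively, such that an abstract algebra $\A$ satisfies the schema if and only if $\A\cong(\mathfrak B(V),\ldots)$ for some diagonalizable, resp.\ locally square, $V\subseteq{}^nU$. Soundness (every such set algebra validates the schema) is routine; completeness (every model of the schema is representable) is the substance of \cite{AT, Fer}. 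Since the model class of a set of equations is a variety, both ${\sf D}_n={\sf Mod}(\Sigma_{\sf D})$ and ${\sf G}_n={\sf Mod}(\Sigma_{\sf G})$ are varieties axiomatized by a finite schema --- the only non-obvious closure property, namely under $\bold H$, is precisely what the completeness direction delivers, while closure under $\bold S$ is immediate (a subalgebra of $(\mathfrak B(V),\ldots)$ embeds in the full set algebra on the same unit) and under $\bold P$ one realizes the product on the disjoint union of the units, which is again diagonalizable, resp.\ locally square. When $n<\omega$ the passage to a genuinely \emph{finite} equational base is purely syntactic: a Halmos schema is a finite list of equation patterns whose parameters are indices $i,j<n$ and maps $\tau\colon n\to n$, of which there are now only finitely many, so instantiating all of them yields a finite axiomatization over a finite signature.

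\textbf{Decidability for $n<\omega$.} For the last assertion I would follow the route indicated in the introduction and reduce the universal theory of ${\sf G}_n$ (and, by an identical device, of ${\sf D}_n$) to satisfiability in the loosely guarded fragment of $L_n$, which is decidable by a theorem of Gr\"adel. Adjoin a fresh $n$-ary symbol $R$ to name the unit and a fresh $n$-ary symbol for each algebra variable, and translate a term $t$ to a first-order formula $\widehat t$ recursively, writing $\bar x$ for $x_0,\ldots,x_{n-1}$: Booleans to Booleans, ${\sf d}_{ij}$ to $x_i = x_j$, ${\sf s}_{[i,j]}$ applied to $\psi$ to the transposition of $x_i,x_j$ in $\psi$, and ${\sf c}_i$ applied to $\psi$ to $\exists x_i\,\bigl(R(\bar x)\wedge\psi\bigr)$ --- a quantification guarded by the single atom $R(\bar x)$, which contains all free variables in play. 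Diagonalizability (resp.\ local squareness) of the unit is itself a finite conjunction of guarded Horn sentences $\forall\bar x\,\bigl(R(\bar x)\to R(\bar x\circ\tau)\bigr)$, one for each replacement $[i|j]$ with $i<j<n$ (resp.\ each $\tau\colon n\to n$), where $\bar x\circ\tau$ abbreviates $(x_{\tau(0)},\ldots,x_{\tau(n-1)})$. Combining the Guard theorem quoted just above with the representation theorem, a quasi-equation $\bigwedge_{l<k}(t_l = s_l)\to(t = s)$ \emph{fails} in ${\sf G}_n$ if and only if the loosely guarded sentence
\[
\bigwedge_{\tau\colon n\to n}\forall\bar x\,\bigl(R(\bar x)\to R(\bar x\circ\tau)\bigr)\ \wedge\ \bigwedge_{l<k}\forall\bar x\,\bigl(R(\bar x)\to(\widehat{t_l}\leftrightarrow\widehat{s_l})\bigr)\ \wedge\ \exists\bar x\,\bigl(R(\bar x)\wedge\neg(\widehat{t}\leftrightarrow\widehat{s})\bigr)
\]
is satisfiable (for ${\sf D}_n$ one restricts the first conjunction to the replacements and drops the ${\sf s}_{[i,j]}$ clause from the translation). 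Since the loosely guarded fragment has a decidable satisfiability problem, this yields a decision procedure outright, so the universal theory --- hence \emph{a fortiori} the equational theory --- of each of ${\sf D}_n$ and ${\sf G}_n$ is decidable.

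\textbf{The main obstacle.} The real work is the faithfulness of the reduction: one has to verify that the $R$-relativized (``guarded'') evaluation of $\widehat t$ in a model of the squareness axioms matches, under the isomorphism supplied by the Andr\'eka--Resek--Thompson / Ferenczi representation theorem, the value of $t$ in the corresponding set algebra of ${\sf G}_n$ (resp.\ ${\sf D}_n$) --- this is exactly where the completeness half of the representation theorem is indispensable, the soundness half handling the converse direction. Checking that the translated sentences genuinely fall inside the loosely guarded fragment, and that the decidability of that fragment (with its elementary complexity bound) carries over, is then routine bookkeeping; a byproduct of the construction is the finite model property for both varieties when $n<\omega$.
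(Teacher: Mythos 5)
Your argument is correct, and on the decidability claim it is essentially the paper's own proof: both reduce failure of a universal formula in ${\sf G}_n$ (resp.\ ${\sf D}_n$) to satisfiability of a loosely guarded sentence and then invoke decidability of the loosely guarded fragment, the only cosmetic difference being that you guard quantifiers by a fresh unit predicate $R(\bar x)$ with one relation symbol per algebra variable, while the paper guards by $1(\bar u)$ in a signature $\L(\A)$ having one $n$-ary symbol per element of a fixed algebra. One small correction there: the completeness half of the representation theorem is not what makes the reduction faithful --- the match between guarded evaluation in a model of the squareness axioms and evaluation in the set algebra on $V=R^M$ is a direct induction on terms that never leaves the class of set algebras; the representation theorem enters only to identify that class (up to isomorphism) with the equationally axiomatized variety, so that deciding the universal theory of one decides that of the other. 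Where you genuinely part ways with the paper is on the axiomatizability claims: you import them wholesale from \cite{AT, Fer, f} (legitimate, since the theorem is attributed to those sources), whereas the paper gives an independent completeness proof --- an atomic game in which the second player builds partial networks labelled by atoms, with well-definedness of the labels $({\sf t}^{j_0}_{i_0}\cdots{\sf t}^{j_k}_{i_k})a$ secured by the merry-go-round identities, yielding complete representability of atomic models of the schema --- and then lifts the finite-dimensional result to infinite $\alpha$ by an ultraproduct of the finite reducts $\mathfrak{Rd}^{\rho}\A$ taken over the directed family of finite injections $\rho$ into $\alpha$. The paper's route buys self-containedness plus the stronger complete-representability statement; yours is shorter and fully suffices for the theorem as stated.
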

\begin{proof} 
In the coming first three items we assume that $1<n<\omega$.

{\bf (1) Decidability:} We give a new (to the best of our knowledge) proof for 
decidability of the universal theory of ${\sf G}_n$. The proof is inspired
by the proof of \cite[Lemma 19.14]{book} which depends on the decidability
of the loosely guarded fragment of first order logic.
 
For $\A\in {\sf G}_n$,   let $\L(\A)$ be the first order signature consisting of an
$n$--ary relation symbol for each element
of $\A$. 
Then we show that for every $\A\in {\sf G}_n$, for any  $\psi(x)$ a quantifier free formula of the signature of $\sf G_n$
and $\bar{a}\in \A$ with $|\bar{a}|=|\bar{x}|$, there is a loosely guarded $\L(\A)$
sentence $\tau_{\A}(\psi(\bar{a}))$ whose relation symbols are among $\bar{a}$
such that for any relativized representation $M$ of $\A$,
$\A\models \psi(\bar{a})\iff M\models \tau_{\A}(\psi(\bar{a}))$. 

Let $\A\in \sf G_n$ and 
$\bar{a}\in \A$. We start by the terms. Then by induction we complete the translation to quantifier free formulas.
For any tuple $\bar{u}$ of distinct $n$ variables, and term $t(\bar{x})$ in the signature
of $\sf G_n$,
we translate $t(\bar{a})$ into a loosely guarded formula
$\tau_\A^{\bar{u}}(t(\bar{a}))$ 
of the first order language having signature
$L(\A)$.
If $t$ is a  variable, then $t(\bar{a})$ is $a$ for some $a\in \rng(\bar{a})$, and we
let $\tau_\A^{\bar{u}}(t(a))=a(\bar{u}).$
For ${\sf d}_{ij}$ one sets
$\tau_{\A}^{\bar{u}}(t)$
to be ${\sf d}_{ij}^{\A}(\bar{u})$ and the constants $0$ and $1$ are handled analogously.
Now assume inductively that $t(\bar{a})$ and $t'(\bar{a})$
are already translated.
We suppress $\bar{a}$ as it plays no role here. For all $i, j<n$ and $\sigma:n\to n$, define
(for the clause ${\sf c}_i$,  $w$ is a new variable):
\begin{align*}
\tau_{\A}^{\bar{u}}(-t)&=1(\bar{u})\land \neg \tau_{\A}^{\bar{u}}(t),\\
\tau_{\A}^{\bar{u}}(t+t')&=\tau_{\A}^{\bar{u}}(t)+\tau_{\A}^{\bar{u}}(t),\\
\tau_{\A}^{\bar{u}}({\sf c}_it)&=1(\bar{u})\land \exists w[(1(\bar{u}^i_w) \land \tau_{\A}^{{\bar{u}^i_w}}(t)],\\
\tau_{\A}^{\bar{u}}({ \s}_{\sigma} t)&= 1(\bar{u}) \land(\tau_{\A}^{\bar{u}\circ\sigma}(t)).
\end{align*}
Let $M$ be a relativized representation of $\A$,
then $\A\models t(\bar{a})=t'(\bar{a})$
$\iff M\models \forall\bar{u} [\tau_{\A}^{\bar{u}}(t(\bar{a})) \longleftrightarrow  \tau_{\A}^{\bar{u}}(t'(\bar{a}))].$
For terms $t(\bar{x})$ and $t'(\bar{x})$ and $\bar{a}\in \A$, choose pairwise distinct variables
$\bar{u}$, that is for $i<j<n$, $u_i\neq u_j$
and define
$\tau_{\A}(t(\bar{a})=t'(\bar{a})): = \forall \bar{u} [1(\bar{u})\to (\tau_{\A}^{\bar{u}}(t(\bar{a}))\longleftrightarrow
\tau_{\A}^{\bar{u}}(t'(\bar{a})))].$
Now extend the definition to the Boolean operations as expected, thereby completing the translation of any quantifier free formula
$\psi(\bar{a})$ in the signature of $\sf G_n$  to the $L(\A)$ formula 
$\tau_{\A}(\psi(\bar{a}))$.

Then it is easy to check that, for any
quantifier free formula $\psi(\bar{x})$ in the signature of $\sf G_n$ 
and $a\in \A$, we have:
$$\A\models \psi(\bar{a})\iff M\models \tau_{\A}(\psi(\bar{a})),$$
and the last is a loosely guarded $\L(\A)$  sentence.
By decidability of the loosely guarded fragment
the required result follows. 

{\bf (2) Representability:} The proof in \cite{f} of representability is a step--by--step argument. We re--prove (differently) representability using games.
 In our proof we use the axiomatization in \cite[Definition 6.2.5]{f} 
where all substitution operations $\s_{\tau}$,  $\tau:n\to n$
are in the signature satisfying
$\s_{\tau\circ \lambda}=\s_{\tau}\s_{\lambda}$ and
$\s_{\tau}{\sf d}_{ij}={\sf d}_{\tau (i)\tau (j)}.$ 

The proof is inspired by the proof of \cite[Lemma 7.8]{book}. Details skipped can all be found in 
\cite{Fer}.  Fix $1<n<\omega$. 
Let $\Sigma$ be given as in \cite[Definition 6.2.5]{f}. 
For $i,j\in n$, $i\neq j$,
define ${\sf t}_j^ix={\sf d}_{ij}\cdot {\sf c}_i x$ and $\t_i^ix=x$.
Then $(\t_j^i)^{\A}: \At\A\to \At\A$ \cite{AT}.
We show that if $\A\models \Sigma$ and $\A$ is atomic, then $\A$ is completely representable as an atomic $\sf G_n$. 

A {\it partial network} is defined like a network except that it is a partial 
map whose domain is locally square, and if $N$ is such a network
then we require that it satisfies 
$\s_{[i, j]}N(x)=N(x\circ [i,j])$ for $x\in \dom(N)$ and $i<j<n$. 
Let $\A\in {\sf TA}_n$. Fix an atom $a\in \At\A$. Let $\bar{x}$ be any $n$-tuple of nodes such that 
$x_i=x_j\iff a\leq {\sf d}_{ij}$ for all $i<j<n$.
Let $\map_{\bar{x}}={}^{n}\{x_0, x_1, \cdots, x_{n-1}\}$.  
Consider the following equivalence relation $\sim$ on $\map_{\bar{x}}$:
$\bar{y}\sim\bar{z}\iff \bar{z}=\bar{y}\circ \tau\text{ for some finite permutation }\tau$,
and $\bar{y},\bar{z}\in \map_{\bar{x}}$.
Choose and fix representative tuples for the equivalence classes of $\sim$
such that each tuple is of the form $\bar{x}\circ [i_k|j_k]\ldots \circ [i_0|j_0]$  for some $k\geq 0$, $\si, \sj< n$.

Let $\sf{Rt}$ denote this fixed set of representative tuples. 
Define the map (network) ${N_{\bar{x}}^{(a)}:\map_{\bar{x}}\rightarrow \At\A}$ as follows:
If $\bar{y}\in\sf{Rt}$, then $\bar{y}$ is non--surjective, so it is a composition of $\bar{x}$ with some 
replacements on $n$. Assume that 
$\bar{y}=\bar{x}\circ [i_k|j_k]\ldots \circ [i_0|j_0]$, say,  for some  $\si$, $\sj$ $<n$. Let $N_{\bar{x}}^{(a)}(\bar{y})=\ttr a$.
The number and order of  replacements are not unique of course but the merry go round identities ($\sf MGR$) implied by $\Sigma$ \cite{Fer}, make $\ttr a$ well-defined. 

In more detail, let $\Omega=\{\t_i^j: i,j\in n\}^*$, where for any set $H$, $H^*$ denotes the free monoid generated by $H$. Let
$\sigma=\t_{j_1}^{i_1}\ldots \t_{j_n}^{i_n}$
be a word. Then define for $a\in A$ and $\sigma\in \Omega$,
$\sigma^{\A}(a)=(\t_{j_1}^{i_1})^{\A}((\t_{j_2}^{i_2})^{\A}\ldots (\t_{j_n}^{i_n})^{\A}(a)\ldots ),$
and $\hat{\sigma}=[i_n|j_n]\circ  [i_{n-1}|j_{n-1}]\ldots\circ [i_1|j_1].$
Then using the $\sf MGR$ one can prove that for all $\sigma, \tau\in \Omega$:
$\A\models \sigma(x)=\tau(x)\iff\hat{\sigma}=\hat{\tau}.$
That is $(\forall \sigma,\tau\in \Omega)(\hat{\sigma}=\hat{\tau}\implies (\forall a\in A)(\sigma^{\A}(a)=\tau^{\A}(a)).$

If $\bar{z}=\bar{y}\circ \sigma$ for some finite permutation $\sigma$ and some $\bar{y}\in\sf{Rt}$, then let 
$N_{\bar{x}}^{(a)}(\bar{z})= \s_{\sigma}N_{\bar{x}}^{(a)}(\bar{y})$.
Then it can be checked that $N_{\bar{x}}^{(a)}$ is well defined such that for any $\tau\in {}^nn$,
$N_{\bar{x}}^{(a)}(\bar{x}\circ \tau)=\s_{\tau}a$. 

Now we show that \pe\ has a \ws\ in the atomic game  
of (possibly transfinite) length $|\At\A|+\omega$ as defined in \cite[Definition 3.3.2]{HHbook2}.
In this game \pa\ is offered only a {\it cylindrifier move} and it suffices to check \pe's response to this move. 
(The rest follows by transfinite induction). 

Suppose that  we are at round $t$ and \pa\ chooses $i<n$, an atom $b\in\At\A$, a
previously played partial network
$N_t$ and $\bar{x}\in {}^nN_t$,
such that $N_t(\bar{x})\leq {\sf c}_ib$. 
If there is $z\in N_t$ with $N_t(\bar{x}_z^i)\leq b$ she lets $N_{t+1}=N_t$. This finishes her move.
Else, she takes  $z\notin \rng({\bar{x}})$, $\bar{t}=\bar{x}_z^i$
and defines the partial network $N_{\bar{t}, b}=N^{(b)}_{\bar{t}}\upharpoonright D$, where $D=\{s\in N_{\bar{t}}^{(b)}: z\in \rng(s)\}$.  
She defines $G=N^{(a)}_{\bar{x}}\cup N_{b, \bar{t}}$ where $a=N_t(\bar{x})$ (is an atom in $\A$). 
Then $\dom (N^{(a)}_{\bar{x}})\cap \dom (N_{b, \bar{t}})=\emptyset$, so
$G$ is a partial (map) network. By construction  we have  $N_t(i_1,\ldots, i_n)=G(i_1,\ldots, i_n)$ for all $i_1,\ldots, i_n\in \rng(\bar{x})$ so 
$N_{t+1}=N_t\cup G$ is a partial network which is the required response, since $N_t\subseteq N_{t+1}$, 
$\bar{t}\equiv_i x$ and $N_{t+1}(\bar{t})=G(\bar{t})=N_{b, \bar{t}}(\bar{t})=N_{\bar{t}}^{(b)}(\bar{t})=b$.

For each $a\in \At\A$, consider the play of the game in which \pe\ plays partial networks with fewer than $|\At\A|+\omega$ nodes, 
and \pa\ chooses the atom $a$ initially, and picks all possible
$i<n$, all hyperedges and all legitimate atoms eventually. Let the limit of the play
be $N_a$; $N_a=\bigcup_{t<|\At\A|+\omega} N_t$ with atomic labels defined the obvious way: If $\bar{x}\in \dom(N_a)$, then there exists $t<|\At\A|+\omega$,
such that $\bar{x}\in \dom(N_t)$. One sets $N_a(\bar{x})=N_t(\bar{x})$. This is well defined because the partial networks are nested. 
Then we can assume that for each $a\in \At\A$  there is $\bar{x}\in \dom(N_a)$ with $N_a(\bar{x})=a$, and whenever $\bar{x}\in \dom(N_a)$, $b\in \At\A$         
and $N_a(\bar{x})\leq {\sf c}_ib$, there is a $\bar{y}\in \dom(N_a)$ with
$\bar{x}\equiv_i \bar{y}$ and $N_a(\bar{y})=b$. By re--naming nodes of networks, one can
arrange that $\nodes(N_a)\cap \nodes(N_b)=\emptyset$ whenever $a$ and
$b$ are distinct atoms. The base of the representation is the union
of sets of nodes of the $N_a$s ($a\in \At\A$), and the atomic, hence
complete representation, is defined via the map
$d\mapsto \{\bar{x}: \exists a\in \At\A: \bar{x}\in \dom(N_a), N_a(\bar{x})\leq  d\}$ $(d\in \A).$

{\bf (3) The ${\sf D}_n$ case:}
Here one takes only the subset $NS_{\bar{x}}$ of 
non--surjective maps in $\map_{\bar{x}}$ with $\bar{x}$ as above.  The atomic labels for the {\it partial network}  
$N_{\bar{x}}^{(a)}$ where $a\in \At\A$ and $\A\in {\sf D}_n$, with domain $NS_{\bar{x}}$ is defined for $\bar{y}\in NS_{\bar{x}}$  
by $N_{\bar{x}}^{(a)}(\bar{y})=\ttr a$, where  $\bar{y}=\bar{x}\circ [i_k|j_k]\ldots \circ [i_0|j_0]$  
for some  $\si$, $\sj$ $<n$ which is well defined by $\sf MGR$. 
Here {\it only replacements} are used, because $\bar{y}$ is not surjective.

{\bf (4) Infinite dimensional case:} Now we show briefly that we can lift the representability result proved above to the transfinite.
This is a known result \cite{Fer,f}. We give a different short proof. 
Let $\alpha\geq \omega$ and let $\A\in \sf TA_{\alpha}$. For any
$n\in \omega$ and injection $\rho:n\rightarrow\alpha$, $\mathfrak{Rd}^{\rho}\A$ as in \cite[Definition 2.6.1]{HMT2} is in ${\sf TA}_{n}$. 
Hence by the representability result for the finite dimensional case proved above, $\mathfrak{Rd}^{\rho}\A\in \bold I{\sf G}_n$ and so it is in $\bold S\mathfrak{Rd}^{\rho}\bold I\sf G_{\alpha}$. Let
$J$ be the set of all finite injective sequences $s$ such that $\rng(s)\subseteq \alpha$. For $\rho\in J$, let
$M_{\rho}=\{\sigma\in J: \rng\rho\subseteq \rng\sigma\}$. Let $U$ be an ultrafilter of $J$ such that $M_{\rho}\in U$ for
every $\rho\in J$. Then for $\rho\in J$, there is a $\B_{\rho}\in \bold I\sf G_{\alpha}$
such that $\mathfrak{Rd}^{\rho}\A\subseteq \mathfrak{Rd}^{\rho}\B_{\rho}$. Let $\mathfrak{C}=\Pi_{\rho\in J}\B_{\rho}/U$; it is in ${\bf UpI}\sf G_{\alpha}=G_{\alpha}$.
Define $f:\A\rightarrow{\bf P}_{\rho\in J}\B_{\rho}$ by $f(a)_{\rho}=a$, and finally 
define the required representing embedding $g:\A\rightarrow\mathfrak{C}$ by $g(a) = f(a)/U$. 
\end{proof}

\section {An overview and summary of results}

To get a grasp of how  difficult the {\it representability problem for $\CA$s} seemed to be in the late sixties of the last century
we quote  Henkin, Monk and Tarski \cite[pp.416]{HMT1}:

`{\it There are two outstanding open problems, one of them is the problem of providing a simple
intrinsic characterization for all representable $\CA$s, the second problem is to find a notion
of representability for
which a general representation theorem could be obtained which
at the same time would be close to geometrical representation in the concrete character and intuitive simplicity.
It is by no means clear that a satisfactory solution
of either of these problem will ever be found or
that a solution is possible'!} (Our exclamation mark).

Later, Henkin, Monk and Tarski formulated the finitizability problem this way:

{\it Devise an algebraic version of predicate
logic in which the class of representable algebras forms a finitely based variety}
\cite{Andreka, Nemeti, Sain,  Fer, SG, Bulletin, AT,  v,  book,  HMT2,  Sagi2, Simon2}.

Since (representable)  $\CA$s were originally designed to algebraize first order logic, the two problems are obviously related. 
Seeing as how the class of representable $\CA$s is a variety, the condition  `finitely based'
in the second quote (which means finitely axiomatizable) is probably the most natural interpretation of the somewhat vague `a simple
intrinsic characterization for all representable $\CA$s' in the first quote, where {\it simple intrinsic characterization} is replaced by the more mathematically 
rigorous {\it simple (finite) equational axiomatization}.

We believe that theorem \ref{final}  reformulated next possibly stands against
Henkin, Monk and Tarski's expectations, for the second problem in the first quote \cite[pp.416]{HMT1} {\it does not} prohibit the option of changing  the semantics, 
that is alter the notion of representability,
as long as it is `concrete and intuitive' enough. This, in turn, possibly indicates that their conjecture as 
formulated in the last two lines of their quote at the 
beginning of this section taken from \cite[pp.416]{HMT1}, was either too hasty or/ and unfounded.

{\bf After all we could find  a variety $\sf Gp_{\T}$  of set algebras, with a natural notion of representability; the operations are interpreted
as {\it concrete set--theoretic operations}
(like Boolean intersection and projections) such that if $\T$ is the rich finitely presented semigroup in \cite{Sain} with finite set $\sf S$ presenting $\T$, then 
$\sf Gp_{\T}$ is  definitionally equivalent to a finitely axiomatizable variety in the signature consisting of the Boolean operations
together with $\{{\sf c}_0, {\sf d}_{01}, {\sf s}_{\tau}: \tau\in \sf S\}$.}

We formulate the next theorem as a 
Stone--like representability result for algebras of 
relations of infinite rank in the form given for $\sf G_n$ in theorem 
\ref{m} to draw the analogy with guarding:
 
\begin{theorem}\label{final} Let $\T$ be a countable rich finitely presented subsemigroup of $(^{\omega}\omega, \circ)$ 
with distinguished elements $\pi$ and $\sigma$. 
Assume that $\sf T$ is presented by the finite set of transformations $\sf S$ such that $\sigma\in \sf S$. 
Then the  class $\sf Gp_T$ of all $\omega$--dimensional set algebras of the form
$(\mathfrak{B}(V), {\sf c}_0, {\sf d}_{01}, {\sf s}_{\tau})_{\tau\in \sf S},$
where $V\subseteq {}^{\omega}U,$  $V$ a non--empty union of cartesian spaces, is a finitely axiomatizable variety.
All the operations ${\sf c}_i, {\sf d}_{ij}$, $i,j\in \omega\sim \{0\}$ are term definable. If $\T$ is strongly rich then all properties in theorem \ref{fp} holds for
$\sf Gp_{\T}$.
\end{theorem}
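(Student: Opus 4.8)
The plan is to recognize this statement as a repackaging of item~(1) of Theorem~\ref{interpolation} and of Theorem~\ref{fp}, together with the finite--presentation bookkeeping of Sain~\cite{Sain}, and then to make the resulting term--definitional equivalence explicit in the stated finite signature. The first step is to recall that richness of $\T$ already yields $\CPEA_\T=\bold I\Gp_\T$, so it suffices to produce a finite equational axiomatization of $\CPEA_\T$ in a finite signature, modulo term definitions of the discarded operations.

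Next I would verify that the cylindrifiers $\c_i$ and diagonals ${\sf d}_{ij}$ with $i,j\in\omega\setminus\{0\}$ are term definable from $\c_0$, ${\sf d}_{01}$ and the finitely many substitutions $\s_\tau$ ($\tau\in\sf S$). Here the point is that the distinguished successor element $\sigma\in\sf S$, together with its quasi--inverse $\pi$ and the replacements and transpositions of $\T$ (all of which are words over $\sf S$), generate via $\s$ enough substitutions to shift the single cylindrifier $\c_0$ and the single diagonal ${\sf d}_{01}$ to every coordinate: $\c_i x$ is obtained by conjugating $\c_0$ by the transposition $[0|i]$ (itself a word in $\sigma,\pi,[0,1]$ up to $\T$--membership), using axiom~(9) of Definition~\ref{capa}, and ${\sf d}_{ij}$ is obtained from ${\sf d}_{01}$ via axiom~(12), ${\sf s}_\tau{\sf d}_{kl}={\sf d}_{\tau(k),\tau(l)}$. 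Thus the full $\CPEA_\T$--signature is interdefinable with the finite signature $\{+,\cdot,-,0,1,\c_0,{\sf d}_{01}\}\cup\{\s_\tau:\tau\in\sf S\}$.

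The technical heart --- and the main obstacle --- is to show that the finite equation schema of Definition~\ref{capa}, pulled back along these term definitions, collapses to a genuinely finite set of equations $\Sigma$; this is exactly the truncation performed in \cite{Sain} for the equality--free case, and one must check it still goes through with diagonals present. There are two sources of ``infinitely many instances''. The quantifier over $\tau,\sigma\in{}^\omega\omega$ is absorbed by the finite presentation: since $\sf S$ generates $\T$ subject to finitely many defining relations, axioms~(5)--(7) reduce to finitely many equations (the homomorphism law on each generator, one equation per defining relation, and ${\sf s}_{Id}x=x$), after which every instance of an axiom mentioning a general $\tau\in\T$ becomes a finitary consequence. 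The quantifier over $i,j\in\omega$ is absorbed by the term definitions of the previous step: once $\c_0$ obeys items~(2)--(4) and the weak commutation axiom~(9) holds for the generators, the corresponding axioms for the derived $\c_i$ follow, and similarly items~(8),(10)--(12) transfer the diagonal axioms. The only genuinely new verification beyond \cite{Sain} is that axioms~(8),(10)--(12) survive the truncation --- e.g.\ (8) is a finite condition because $\Delta x$ is controlled on generators, and (12) is literally ${\sf s}_\tau$ applied to (10)--(11) --- so that no essential infinity is reintroduced by equality. Collecting everything gives a recursive finite $\Sigma$ with ${\sf Mod}(\Sigma)$ term--definitionally equivalent to $\Gp_\T$; the existence of a rich finitely presented $\T$ with $\sigma$ among its generators is \cite{Sain}. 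Finally, the closing sentence is immediate: the class $\Gp_\T$ described here is verbatim the variety $\Gp_\T$ of Theorem~\ref{fp}, so for strongly rich $\T$ all the properties listed there --- complete representability of countable atomic members, canonicity, atom--canonicity, closure under Dedekind--MacNeille completions, the omitting types theorem, Craig interpolation, Beth definability, and super amalgamation --- hold without change.
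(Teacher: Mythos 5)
Your proposal is correct and follows essentially the same route as the paper: Theorem \ref{final} is stated there as a reformulation of Theorem \ref{fp} (whose proof invokes $\CPEA_\T=\bold I\Gp_\T$ from Theorem \ref{interpolation}(1), Sain's truncation for finitely presented semigroups, and term--definitional equivalence with the finite signature generated by $\c_0$, ${\sf d}_{01}$ and $\s_\tau$, $\tau\in\sf S$), and your elaboration of the term definitions and the collapse of the axiom schema matches the paper's sketch. (Only a notational quibble: the conjugation yielding $\c_i$ uses the transposition $[0,i]$, not the replacement $[0|i]$, via the permutation clause of axiom (9).)
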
 
Now the logical counterpart of the first part of the previous theorem is:
\begin{theorem}\label{f}
Let $\T$ be a semigroup as specified in the previous theorem. Let $\L_{\T}$ be the algebraizable logic corresponding to ${\sf Gp}_{\T}$ (in the Blok--Pigozzi sense). 
Then the
satisfiability relation $\models_w$ induced by ${\sf Gp}_{\T}$
admits a  finite recursive sound and  complete
proof calculus for the set of type--free valid formula schemata which involves only type--free valid formula schemata  $\vdash$ say,
with respect to $\models_w$, so that $\Gamma\models_w \phi\iff \Gamma\vdash \phi$.  This recursive complete 
axiomatization is a  Hilbert style axiomatization, and there is a translation recursive function $\sf tr$ mapping
$L_{\omega, \omega}$ formulas  to  formulas in $\L_{\T}$
preserving $\models_w$ (but not the usual validity $\models$).
\end{theorem}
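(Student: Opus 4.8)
\textbf{Proof proposal for Theorem \ref{f}.}

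The plan is to obtain the logical statement purely as a translation of the algebraic facts already established for $\sf Gp_{\T}$ in Theorems \ref{fp} and \ref{final}, using the standard Blok--Pigozzi machinery. First I would set up $\L_{\T}$ explicitly as an algebraizable logic: its algebraic semantics is exactly the variety $\sf Gp_{\T}$, which by Theorem \ref{final} (under the finitely presented, rich hypothesis on $\T$) is term--definitionally equivalent to a finitely axiomatizable variety in the finite signature consisting of the Boolean operations together with $\{{\sf c}_0,{\sf d}_{01},{\sf s}_{\tau}:\tau\in \sf S\}$. Since $\sf Gp_{\T}$ is a variety defined by a recursive finite equation set $\Sigma$, the corresponding deductive system is finitely axiomatizable: one takes the finitely many equations of $\Sigma$ and transcribes each equation $p=q$ into the pair of formula schemata $p\leftrightarrow q$ (using the defining equivalence formula of the algebraization), together with the finitely many schemata needed for the Boolean base and for the rules governing ${\sf c}_0$ and the ${\sf s}_{\tau}$'s. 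This yields the desired finite, recursive, Hilbert--style axiomatization $\vdash$; soundness and completeness $\Gamma\models_w\phi\iff\Gamma\vdash\phi$ are then immediate from the Blok--Pigozzi equivalence between the consequence relation of an algebraizable logic and the equational consequence of its algebraic semantics, once one knows that $\sf Gp_{\T}$ \emph{is} a variety (here is where we crucially use that $\sf K=\sf Gp_{\T}$ itself, with no $\bold H$ needed, in contrast to Theorem \ref{Sain}, so the logic is genuinely compact, not merely complete for validities).

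Next I would construct the recursive translation $\sf tr$ from $L_{\omega,\omega}$ to $\L_{\T}$ preserving $\models_w$. The idea is the usual guarding/relativization translation: a first order formula is sent to a formula of $\L_{\T}$ by relativizing quantifiers and atomic formulas so that the resulting semantics lands in the relativized (union--of--cartesian--spaces) top elements of algebras in $\sf Gp_{\T}$ rather than in full squares. Concretely, one defines $\sf tr$ by recursion on formulas, using the substitution operators ${\sf s}_{\tau}$ ($\tau\in\T$) to mimic variable manipulation and the cylindrifier ${\sf c}_0$ (plus the term--definable ${\sf c}_i$) for quantifiers, exactly along the lines of the guarding theorem quoted just before Definition \ref{de} and the translation used in the proof of decidability for $\sf G_n$ in Theorem \ref{m}(1). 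One must check that $\sf tr$ is recursive (clear, since it is defined by structural recursion with finitely many clauses) and that for any generalized model $(M,V)$ with $V$ a union of cartesian spaces, $M,V,s\models\phi\iff$ the image of $\sf tr(\phi)$ holds at $s$ in the corresponding algebra of $\sf Gp_{\T}$; this is a routine induction on $\phi$ once the clauses are chosen to match the set--theoretic interpretation of the operations. The statement that $\sf tr$ does \emph{not} preserve ordinary validity $\models$ is precisely the price of relativization and requires no extra argument beyond pointing to the existence of non--representable-in-the-square algebras.

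I expect the main obstacle to be twofold. The first delicate point is verifying that the finite schemata $\Sigma$ axiomatizing $\sf Gp_{\T}$ really do transcribe into a \emph{finite} (not merely recursively enumerable) proof calculus: this hinges on Theorem \ref{final}'s claim that for a finitely presented rich $\T$ with $\sigma\in\sf S$ one can truncate the axiomatization of Definition \ref{capa} to finitely many equations in a finite signature, with the remaining operations term--definable via iteration of the successor--like $\sigma$; I would invoke that result and the techniques of \cite{Sain} rather than re--prove it. The second, and genuinely subtle, point is the correctness of the translation $\sf tr$ on the \emph{guarded} semantics $\models_w$ when quantifiers are nested in a way that cylindrifiers do not commute: because $\sf Gp_{\T}$ only satisfies a weak form of commutativity (axiom (9) of Definition \ref{capa}), the translation clauses for $\exists x_i\exists x_j\phi$ must be set up so that the algebraic term faithfully reflects the non--commutative relativized meaning, and one has to confirm that no over--counting or loss of witnesses occurs. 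This is the step where the careful bookkeeping of admissible substitutions, in the spirit of the $\sf adm$ arguments in the proofs of Theorems \ref{main} and \ref{interpolation}, will be needed; everything else is a bridge--theorem invocation.
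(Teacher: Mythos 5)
Your proposal is correct and follows exactly the route the paper intends: the paper gives no explicit proof of this theorem, presenting it simply as "the logical counterpart" of Theorem \ref{final} obtained via the standard Blok--Pigozzi bridge theorems, which is precisely the argument you spell out (transcribing the finite equational axiomatization of the variety ${\sf Gp}_{\T}$ into a finite Hilbert calculus, and defining $\sf tr$ by the guarding/relativization recursion). Your write-up is in fact more detailed than the paper's, and correctly flags the two points on which the argument really rests, namely Theorem \ref{final}'s finite axiomatizability in a finite signature and the soundness of the translation clauses under the weakened commutativity of cylindrifiers.
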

Here {\it type--free valid formula schemata} is the plural of {\it type--free valid formula schema}. This is a new notion of validity defined by Henkin et 
al. \cite[Remark 4.3.65, Problem 4.16]{HMT2}, \cite[p. 487]{book}. 

\begin{definition}\label{v} A {\it formula schema} is an element of the set of formulas taken in a signature of $\L_{\T}$.
An {\it instance} of a formula schema
is obtained by substituting formulas for the formula variables, i.e for atomic formulas,
in this formula schema. A formula schema is called {\it type--free valid} if
all of its instances are valid.
\end{definition}
Formulas of the
form $\exists x\exists y\phi\leftrightarrow \exists y \exists x \phi$ that are valid in first order logic may not be valid with 
respect to (the weaker validity relation) $\models_w$, 
so the translation function  ${\sf tr}$ is not `faithful' with respect to Tarskian square semantics. 
In the last item of the next theorem we put some of our new results obtained in theorems \ref{interpolation}
and \ref{fp} against their known weaker analogues formulated  in the first two items. 

\begin{theorem}\label{final2}Let $\T$ and $\sf S$ be as in theorem \ref{final}.
\begin{enumarab}
\item {\bf $\sf FOL$ without equality}: The class $\sf K$ of $\omega$--dimensional set algebras of the form
$(\mathfrak{B}(V), {\sf c}_0, {\sf s}_{\tau})_{\tau\in \sf S}$
where $V$ is a compressed space is a finitely axiomatizable variety \cite{Sain}. 
Furthemore, if $\T$ is strongly rich then $\Fr_{\omega}\sf K$ has the interpolation property \cite{AU} and the class of countable 
completely representable algebras coincides with the class consisting of 
the countable atomic and completely additive algebras \cite{Sayedpa}.

\item {\bf $\sf FOL$ with equality}: The class $\sf K$ of $\omega$--dimensional set algebras of the form 
$(\mathfrak{B}(V), {\sf c}_0, {\sf d}_{01}, {\sf s}_{\tau})_{\tau\in \sf S},$
where $V$ is a disjoint union of cartesian squares, is not a variey, 
but $\sf V= \bold H{\sf K}$ is a finitely 
axiomatizable variety, cf. \cite{SG} and theorem \ref{Sain}. If $\T$ is strongly rich and if $X_1, X_2$
are subsets of the set of free generators of $\Fr_{\omega}\sf V$, $a\in \Sg^{\Rd_{qea}\A}X_1$
and $b\in \Sg^{\Rd_{qea}\A}X_2$ are  
such that $a\leq b$, then there exists $c\in \Sg^{\A}(X_1\cap X_2)$ such that
$a\leq c\leq b$ \cite{typeless}. There are countable atomic algebras, when $\T$ is rich or 
strongly rich, that are not completely representable \cite{conference}.

\item {\bf Solution for $\sf FOL$ with equality in this paper}: The class $\sf K$ of $\omega$--dimensional set algebras 
of the form $(\mathfrak{B}(V), {\sf c}_0, {\sf d}_{01}, {\sf s}_{\tau})_{\tau\in \sf S},$
where $V$ is a  union of cartesian squares, is a finitely 
axiomatizable variety. Furthermore, if $\T$ is strongly rich, then 
$\Fr_{\omega}\sf K$ has the interpolation property and the class of countable completely representable algebras 
coincides with the class of countable atomic algebras.
\end{enumarab}
\end{theorem}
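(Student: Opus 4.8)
The plan is to treat the three items as three corollaries of material already established in the paper, plus (for items (1) and (2)) appeals to the cited literature. The substantive new content is item (3), and this is exactly Theorem \ref{final} rephrased in the language of compressed/disjoint/arbitrary unions of cartesian spaces, together with the relevant consequences from Theorem \ref{interpolation} and Theorem \ref{fp}. So the bulk of the argument is bookkeeping: one must check that the finite-signature set algebras $(\mathfrak{B}(V),{\sf c}_0,{\sf d}_{01},{\sf s}_\tau)_{\tau\in\sf S}$ with $V$ a union of cartesian spaces are precisely the set algebras $\sf Gp_\T$ (up to term-definitional equivalence, using that $\sigma\in\sf S$ generates the remaining cylindrifiers and diagonal elements, exactly as in the proof of Theorem \ref{fp}), and that a finite recursive equational axiomatization exists, which is the first assertion of Theorem \ref{final} and was obtained there by truncating the Ferenczi-style schema of Definition \ref{capa} over a finitely presented rich semigroup à la Sain \cite{Sain}.

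First I would dispatch item (1): this is verbatim Sain's theorem \cite{Sain} on first order logic without equality, with the interpolation property of the free algebra coming from \cite{AU} (the strongly rich hypothesis) and the characterization of countable completely representable algebras as the countable atomic completely additive ones coming from \cite{Sayedpa}; nothing new is needed. Next, item (2): the non-variety status of the disjoint-union class $\sf K$, together with $\bold H\sf K={\sf Mod}(\Sigma)$ being finitely axiomatizable, is Theorem \ref{Sain} of this paper (and \cite{SG}); the relative interpolation statement for the quasi-polyadic reduct inside $\Fr_\omega\sf V$ is \cite{typeless}; and the existence of countable atomic but not completely representable algebras in this setting is \cite{conference}. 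Again purely citational.

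The real work is item (3). I would argue as follows. By Theorem \ref{interpolation}(1), for $\T$ rich we have $\CPEA_\T=\bold I\,{\sf Gp}_\T$, and by the opening of the proof of Theorem \ref{fp}, when $\T$ is in addition finitely presented with $\sigma\in\sf S$ a distinguished successor-like element, the Ferenczi axiomatization can be truncated to a finite recursive equational system in the finite signature $\{\,+,\cdot,-,0,1,{\sf c}_0,{\sf d}_{01},{\sf s}_\tau:\tau\in\sf S\,\}$, with all ${\sf c}_i$ and ${\sf d}_{ij}$ term-definable from $\s_\sigma$, $\s_\pi$ and ${\sf c}_0,{\sf d}_{01}$. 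Hence ${\sf Gp}_\T$ is definitionally equivalent to a finitely axiomatizable variety; this gives the "finitely axiomatizable variety" assertion and the term-definability of the remaining operations. It remains only to identify the top elements: an algebra is in ${\sf Gp}_\T$ iff its unit is a union of cartesian spaces $\,{}^\omega U\,$ (Definition 6.3.2 of \cite{f}, as recalled after Definition \ref{capa}), which is exactly the class $\sf K$ of item (3). Finally, assuming $\T$ strongly rich, Theorem \ref{interpolation}(6) (or equivalently Theorem \ref{fp}(6)) gives the interpolation property for $\Fr_\omega\sf K$ with $\le\omega$ free generators, and Theorem \ref{interpolation}(2) gives that every countable atomic algebra in $\sf K$ is completely representable; conversely any completely representable algebra is atomic, so the two classes coincide on countable algebras. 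Assembling these observations yields item (3).

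The main obstacle is not mathematical depth but faithfulness of the translation/identification: one must make sure that the term-definitional equivalence between the finite-signature variety of item (3) and $\sf Gp_\T$ genuinely carries complete representability and the free-algebra interpolation property across (these are not automatically preserved under arbitrary definitional equivalences of signature, but they are here because the defining terms are the fixed words in $\s_\sigma,\s_\pi$ witnessing richness, and these are interpreted concretely as substitution operators in any set algebra). I expect the only place needing care is checking that "union of cartesian spaces" on the finite-signature side matches the top elements of $\sf Gp_\T$ after the reduct to $\{{\sf c}_0,{\sf d}_{01},\s_\tau:\tau\in\sf S\}$, i.e.\ that no relativization information is lost in passing to the finite signature — which follows because $\sf S$ already generates a rich semigroup and hence the full complement of cylindrifiers, so the concrete unit is pinned down exactly as in the $\omega$-dimensional polyadic case.
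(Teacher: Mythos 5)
Your proposal is correct and matches the paper's own treatment: Theorem \ref{final2} is a summary statement whose justification is exactly the cited literature for items (1) and (2) together with Theorems \ref{interpolation}, \ref{fp} and \ref{final} for item (3), and the paper supplies no separate proof beyond this. Your added care about the term-definitional equivalence and the identification of top elements is consistent with what is done in the proof of Theorem \ref{fp}.
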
 
Modulo altering slightly Tarskian semantics,  not only is (3) substantially stronger than the weaker old solution formulated in (2), 
but it also {\it stronger than the old complete solution in (1) for $\sf FOL$ without equality}.  
The reason is that the condition of {\it complete additivity} is not formulated explicitly 
in the characterization of completely representable countable
algebras. It holds anyway.

\subsection{Summary of results and closely related ones in tabular form} 

In the next table, we summarize our results in tabular form.
We go on to fix the notation. For finite $n$, $L_n$ denotes first order logic with equality restricted to the
first $n$ variables and  $\sf KL$ denotes Keisler's logic \cite{K} with algebraic counterpart $\PA_{\omega}$.   
$\PA_\T$ denotes the reduct of $\PA_{\omega}$ studied by Sain \cite{Sain}, where
$\T$ is a rich finitely presented subsemigroup  of $(^{\omega}\omega, \circ)$ 
and $L_{\T}$ is the corresponding algebraisable (complete) extension of first order logic {\it without equality}. 

We refer to the first seven  rows by table 1. For properties in the upper most row of table 1, $\sf f.a$ is short for finitely axiomatizable, `$\sf CR$ is el' abbreviates, 
that the class of completely representable algebras (in  the class addressed) is elementary. $\sf SUPAP$ 
is short for the super amalgamation property
and  {\sf atom-can} is short hand for 
atom--canonical. 

In the last seven rows of the table which we refer to as table 2, various properties of the logics $L_n$, $\L_G$,  $L_{\omega, \omega}$, $\sf KL$, $L_\T$, and $\L_\T$  
are given, where $\L_G$ is  the algebraizable logic corresponding to 
${\sf G}_n$.  Recall that $\L_{\T}$ is the algebraizable logic corresponding to ${\sf Gp}_{\T}$ with $\T$ a rich semigroup. 

We say that a quantifier logic $\L$ {\it enjoys a Lindstr\"om's theorem, ${\sf LT}$ for short}, 
if $\L$ is  {\it countably compact},  
has {\it L\"owenheim number} \cite[Definition on p.130]{CK} $\omega$,
and $\L$ has the  Craig interpolation property. 
It is well known that for $\L$ extending $L_{\omega, \omega}$ (having the same Tarskian semantics) 
only $\L=L_{\omega, \omega}$ enjoys $\sf LT$ (this is called Lindstr\"om's theorem).  

For properties in the upper most row of table 2, $\sf TF$ is short for `admits a 
finite complete calculus involving only {\it type--free
valid formula schemata} in the sense of definition \ref{v}',  $\sf dec.val$ 
abbreviates that the validity problem is decidable, $\sf OTT$ abbreviates `that an omitting types theorem holds',  $\sf VT$ 
is short for  `Vaught's theorem: Countable atomic theories have atomic models',  $\sf int$ stands for (Craig) interpolation, $\sf alg$ stands for algebraisable,
and finally $\sf LT$ is short for a 
`Lindstr\"om's theorem' as just defined.

In the first column we assume that $\T$ is rich and finitely presented and in all other columns we assume 
that $\sf T$ is strongly rich. In the table $n$ is finite $>2$, and $k\geq 3$ (possibly infinite). 
Without the left hand most column, the results declared in the first four columns in table 2 are the logical counterpart of the results in 
the first four columns in table 1 (using fairly standard `bridge theorems' in 
algebraic logic \cite{bp}). 
We view $\sf RCA_{\omega}$ as the algebraic counterpart of the {\it type--free formalism} of $L_{\omega, \omega}$ given in 
\cite[Section 4.3.28, item(ii)]{HMT2}.

The positive answers for $\sf LT$ for finite variable logics, that do not extend $L_{\omega, \omega}$, 
follow by convention, that is from {\it how we defined $\sf LT$.} 
$\sf KL$ does not have $\sf LT$ because its  L\"owenheim number is not $\omega$ 
since its signature is uncountable. $\sf SUPAP$ and $\sf int$ for $\PA_{\omega}$ and 
$\sf KL$, respectively, are 
proved by the author \cite{super}. 

Sources for other results in the table will be specified right after the table.
\vskip3mm

\begin{tabular}{|l|c|c|c|c|c|c|}    \hline
				{\bf Varieties}		&\sf f.a  &  $\sf SUPAP$    &${\sf decidable}$                     &$\sf CR$ is el.           &{\sf Canonical}                         & {\sf atom-can}  \\

                                                               \hline
                                                                          $\bold S{\sf Nr}_n\CA_{n+k}$&no&no&no&no&yes &no\\

                                                                           \hline
                                                                          $\sf G_n$ &yes&yes&yes&yes&yes&yes\\

                                                                           \hline
                                                                          $\sf RCA_{\omega}$&no&no&no&no&yes&?\\

                                                               \hline
                                                                          ${\sf PA}_{\omega}$&no&yes&no &yes&yes&yes \\

                                                               \hline
                                                                          ${\sf PA}_{\T}$&yes&yes&?&yes&yes&yes \\

                                                               \hline
                                                                          ${\sf Gp}_{\T}$&yes&yes&?&no&yes&yes \\
\hline
\hline				{\bf Logic}		&\sf TF&  $\sf int.$    &\sf dec.val                     & $\sf VT, \sf OTT$            &\sf alg                          & {\sf LT}  \\

                                                               \hline
                                                                          $L_{n}$&no&no&no& no&yes&no \\

                                                                           \hline
                                                                          $\L_G$ &yes &yes& yes&yes&yes&yes\\

                                                                           \hline
                                                                          $L_{\omega, \omega}$ &no &yes& no&yes&no &yes\\

                                                                           \hline
                                                                          $\sf KL$ &no&yes& no&yes&yes &no\\

                                                                           \hline
                                                                          $L_{\T}$ &yes &yes& ?&yes&yes &yes\\

                                                                           \hline
                                                                          $\L_{\T}$ &yes &yes& ?&yes&yes &yes\\

\hline

\end{tabular}

\vskip3mm
We cite the sources for other results in the table and make a few more comments. We count  the rows and columns 
without the upper most row and left hand most column:

\begin{enumarab}

\item The results in the first and fourth  row of table 1, when $k=\omega$, are known classical results for $\CA$s \cite{HMT2, Hodkinson, HH}. For $3\leq k<\omega$
in the first row. 
The `no'  in columns 4 and 6  of the first row in  table  1 is proved in corollaries \ref{hod} and \ref{c}  
refining and strengthening the results in \cite{Hodkinson, HH}. 
The results in the second and third rows of table 1 are mostly summarized in  theorem \ref{m}, see also 
\cite{AT, Fer, f}.  The $\sf SUPAP$ is proved for ${\sf G}_n$ and ${\sf D}_n$ in \cite{marx}.
The rough idea, using the terminology and notation, in {\it op.cit} is the following. We know that $\sf G_n$ is axiomatized by a set of positive equations, 
so is canonical.
The first order correspondents of this set of positive equations translated to the class
$\bold L={\sf Str}({\sf G}_n)$ will be {\it Horn formulas},
hence {\it clausifiable} and so $\bold L$ is closed under finite {\it zigzag products}. By
\cite[Lemma 5.2.6, pp.107]{marx}, ${\sf G}_n$ has the super amalgamation property. 
Worthy of note is that this technique works verbatim for $\sf Gp_T$.  

\item The positive results in the seventh  row of table 1 for ${\sf Gp}_{\T}$ are the essential results in this paper, for the infinite dimensional case, 
proved in theorems \ref{interpolation}, \ref{fp}, \ref{final} and \ref{final2}. The only `no' in this line, namely, that 
the class of completely representable algebras is {\it not} elementary, is proved in the second item of 
theorem \ref{interpolation}.

\item   It is known that for all undecidable logics addressed in the table, the validity problem is 
recursively enumerable, except for $\sf KL$.  For first order logic the validity problem  is undecidable. 
In $\L_{\sf T}$ where $\T$ is rich and finitely presented,
the intuitive implication {\it completeness $\implies$ recursive enumerability of validities} holds and 
it is likely that the 
equational theory of $\bold I{\sf Gp}_{\T}$ is decidable, hence so is the validity problem of $\L_{\sf T}$. The finitizability problem as 
posed by Henkin, Monk and Tarski  does not require decidability of the validity problem for the corresponding algebraisable logic. 
 
\item  If $\T$ is a rich semigroup, then 
a set algebra in  ${\sf Gp}_\T$ has top element a union of cartesian squares, while a set algebra in ${\sf G}_{\omega}$ has  top element a union of weak 
spaces. Both $\sf G_{\omega}$ and $\sf Gp_{\T}$ 
are axiomatizable by a finite schemata, 
but $\sf Gp_{\T}$ has the advantage that it can be finitely axiomatized if $\T$ happens to be finitely presented.  In both cases the decidability of their equational theory remains unsettled.

\item For $\sf CPEA_{\alpha}$, $\alpha$ an infinite ordinal, the following hold: {\sf SUPAP, atom-can, CR. el.} and {\sf canonicity}. 
In fact, we have that the class of completely representable algebras 
coincides with the class of atomic ones with {\it no restriction on cardinalities} which was the case with $\sf CPEA_{\T}$, as proved in theorem \ref{main}. 
{\sf SUPAP} can be proved by either the technique sketched in the first item, or exactly like the proof in the last item of theorem
\ref{interpolation} by proving interpolation for the free algebras. This is done by dilating the given free algebra $\A$ with $\beta$--generators, $\beta$ a non--zero cardinal, 
to a regular cardinal $\mathfrak{n}>max \{|A|, |\alpha|, \beta\}$; the rest of the proof is the same. 
\end{enumarab}
Though admitting a finite schemata axiomatizability, the variety $\PA_{\omega}$ 
has a lot of drawbacks from the recursion
theoretic viewpoint.
In Keisler's logic  though the set of validities can be captured by a finite schemata,
namely, Halmos' schemata, this set is not recursively enumerable 
\cite{Sain} which is not the case with $\L_{\T}$ (the logic corresponding to $\sf Gp_{\T}$) when $\T$ is (only) rich. 
The same can be said about ${\sf CPEA}_{\omega}$ due to the presence of continuum many substitution operators in 
its signature. A good reference for excluding apparently satisfactory solutions to the finitizability problem 
(like $\PA_{\omega})$ is \cite{Simon2} entitled: {\it What the finitization problem is not?}
In this paper we focus more on {\it what it is.}

{\bf In the second part of theorem \ref{m} concerning $\sf G_n$ and in theorem \ref{final},
commutativity of cylindrifiers is (syntactically) weakened and  semantics are accordingly
relativized to unions of  spaces that are not necessarily disjoint
to obtain a finitely axiomatizable variety of representable algebras
corresponding in the Blok--Pigozzi sense to the algebraizable formalisms of
quantifier first logic with equality  having $2<n\leq \omega$ variables.
What is highly significant is that in both cases the relativization {\it is the same}.}

There are weaker versions of the finitizability problem ($\sf FP$), like seeking only a `finite recursive schemata', or asking that the 
class of set algebras $(\sf Set)$ {\it generates} a finitely axiomatizable variety like in theorem \ref{Sain}. The class $\sf Set$ itself may not be a variety, 
{\it not even a quasi--variety}. Worthy of note, is that Tarski \cite{Sain} formulated the $\sf FP$ for relation algebras in the last form.

In this paper we provided a solution (in $\sf ZFC$) to the most strict version of the $\sf FP$ for $L_{\omega, \omega}$ 
posed by Henkin, Monk and Tarski in the seventies of the last century 
for $L_{\omega, \omega}$ modulo  (what we believe to be) a reasonable relativization or guarding of semantics.
The relativization is not so severe; item (9) in definition \ref{capa} roughly says that 
substitutions and cylindrifiers commute one way.

Research in 
algebraic logic over the last three decades has revealed that full fledged commutativity of cylindrifiers  is `the main culprit' responsible for many negative results. 
In essence, a precarious `Church--Rosser' condition, it is  responsible for robust undecidability and non--finite axiomatizability when the dimension is at least three. 
We have seen in theorems \ref{2.12}, \ref{decidable}, \ref{OTT} and \ref{OTT2}, 
that the analogous negative results in the classical case addressing the class ${\sf RCA}_n$ ($n\geq 3)$ proved in \cite{Andreka}, \cite[Theorem 4.2.18]{HMT2}, \cite{Hodkinson, HH}, respectively, 
are not avoided even if cylindrifiers are allowed to commute {\it only locally}. By lemma \ref{neatsq} (relating $k$--flatness to existense of $k$--dilations)
this amounts to working with the larger varieties $\bold S{\sf Nr}_n\CA_k$ ($n+3\leq k<\omega$). For such proper approximations of $\sf RCA_n$ negative properties persist. 
Here by `proper approximations' we mean that for $2<n<\omega$ and positive $k\geq 3$, 
${\sf RCA}_n\subsetneq \bold S{\sf Nr}_n\CA_{n+k}$  and 
$\bigcap_{k\in \omega, k\geq 3} \bold S{\sf Nr}_n\CA_{n+k}={\sf RCA}_n.$
But when we weakened commutativity of cylindrifiers {\it globally}, we succeeded to 
obtain positive results, using the {\it same relativization} for both finite and infinite dimensions, 
formulated and proved in theorems \ref{fp}, \ref{m}, \ref{final}, \ref{f}, \ref{final2}. 

To the best of our knowledge {\it no solution exists}  to the $\sf FP$ requiring only finite axiomatizability (completeness and recursive enummerability of validities)
when we require that top elements of set algebras are a {\it disjoint union} of cartesian spaces, 
unless the ontology, namely, the underlying set theory is 
changed \cite{Nemeti, Bulletin, Sayedneat, Simon2}.
This is done by weakening the axiom of foundation \cite[p.130]{Sayedneat}.  
Our investigation in this paper is by no means final. We summarize the above discussion in the following queries:

\begin{enumarab}

\item Is there a solution in $\sf ZFC$ to the $\sf FP$ if we require that the top elements of representable algebras are  {\it disjoint} unions of cartesian squares?
Is removing the condition of {\it disjointness} necessary or only sufficient?

\item Is there a countable semigroup on $\omega$ that is {\bf both} {\it finitely presented and strongly rich}? 
\item Given a countable finitely presented semigroup $\T$ on $\omega$, 
is the validity problem of $\L_\T$ decidable?

\end{enumarab}

\end{document}